\definecolor{red}{rgb}{1,0.1,0.1}%valores de las componentes roja, verde y azul
\definecolor{blue}{rgb}{0.1,0.1,1}
\definecolor{vb}{RGB}{160,32,240}
\newcommand{\red}{\color{red}} % Color del texto a partir de ese punto
\theoremstyle{plain}
\newtheorem*{teo*}{Theorem}
\newtheorem*{prop*}{Proposition}
\newtheorem*{lema*}{Lemma}
\numberwithin{equation}{section}
\newtheorem{teo}{Theorem}[section]
\newtheorem{lema}[teo]{Lemma}
\newtheorem{corol}[teo]{Corollary}
\newtheorem{prop}[teo]{Proposition}
\theoremstyle{remark}
\newtheorem{remark}[teo]{Remark}
\theoremstyle{definition}
\newtheorem*{mydef*}{Definition}
\newtheorem{mydef}[teo]{Definition}
\newtheorem{ejem}[teo]{Example}
\newcommand{\R}{\mathbb{R}^n}
\newcommand{\D}{\mathcal{D}}
\begin{document}

\title[Necessary condition on the weight for  maximal and integral operators]{Necessary condition on the weight  for  maximal and integral operators with rough kernels}

\author[G.~H.~Iba\~{n}ez~Firnkorn]{Gonzalo H. Iba\~{n}ez-Firnkorn}
\address{G.~H.~Iba\~{n}ez~Firnkorn\\ FaMAF \\ Universidad Nacional de C\'ordoba \\
CIEM (CONICET) \\ 5000 C\'ordoba, Argentina}
\email{gibanez@famaf.unc.edu.ar}

\author[M.~S.~Riveros]{Mar\'{\i}a Silvina Riveros}
\address{M.~S.~Riveros \\ FaMAF \\ Universidad Nacional de C\'ordoba \\
CIEM (CONICET) \\ 5000 C\'ordoba, Argentina}
\email{sriveros@famaf.unc.edu.ar}

\author[R.~E.~Vidal]{Ra\'ul E. Vidal}
\address{R.~E.~Vidal \\ FaMAF \\ Universidad Nacional de C\'ordoba \\
	CIEM (CONICET) \\ 5000 C\'ordoba, Argentina}
\email{vidal@famaf.unc.edu.ar}

\thanks{ The authors are  partially supported by
CONICET and SECYT-UNC}

\subjclass[2010]{42B20, 42B25}

\keywords{
%Calder\'on-Zygmund operators,
%Fractional operators, commutators, BMO, H\"ormander's condition of
%Young type, one weighted inequalities, two weighted inequalities
%vector-valued inequalities.
}

%\date{July 16, 2007. \textit{Revised}: \today}

\begin{abstract} Let $0\leq \alpha<n$, $m\in \mathbb{N}$ and let consider $T_{\alpha,m}$ be  a  
of integral operator, given by
kernel of the form
$$K(x,y)=k_1(x-A_1y)k_2(x-A_2y)\dots k_m(x-A_my),$$
where $A_i$ are invertible matrices and each $k_i$ satisfies a
fractional size  and generalized fractional H\"ormander condition.
In \cite{IFRi18}  it was proved that $T_{\alpha,m}$ is controlled in
$L^p(w)$-norms, $w\in A_{\infty}$, by the sum of
maximal operators $M_{A_i^{-1},\alpha}$. In this paper we present
the class of weights  $\mathcal{A}_{A,p,q}$, where $A$ is an
invertible matrix. This class are the good weights for the weak-type
estimate of $M_{A^{-1},\alpha}$. For certain kernels $k_i$
 we can characterize the weights for the strong-type
estimate of $T_{\alpha,m}$. Also, we give a  the strong-type
estimate using testing conditions.
\end{abstract}

\maketitle

\section{Introduction and  results}

In this paper we will characterized the good weights, $0\leq w\in L^1_{loc}(\R)$, for integral operators of the form
\begin{equation}
T_{\alpha,m}f(x)=\int_{\mathbb{R}^n}K(x,y)f(y)dy, \label{eq: defT}
\end{equation}
 where  the kernel
\begin{equation*} %\label{eq: defK}
K(x,y)=k_1(x-A_1y)k_2(x-A_2y)\dots k_m(x-A_my), \qquad m \in\mathbb{N},
\end{equation*}
  $A_i$ are certain invertible matrices
 and  $f \in L_{\text{loc}}^{\infty}(\mathbb{R}^n)$.
In \cite{RS88} the authors studied the $L^2(\mathbb{R})\to
L^2(\mathbb{R})$ boundedness for the case
$$K(x,y)=\frac 1{|x-y|^\alpha}\frac 1{|x+y|^{1-\alpha}}, \qquad\qquad 0<\alpha< 1.$$
In several articles, different authors studied the  boundedness in Lebesgue spaces and weighted Lebesgue spaces where the kernels $k_i$, $i=1,\dots,m $, satisfy certain integral H\"{o}rmander and size condition, for example see \cite{GU93}, \cite{IFRi18}, \cite{RU14}, \cite{RoU13}.

Let $0\leq \alpha<n$ and  $0<\alpha_i<n$, $1\leq i\leq m$,  such that  $\alpha_1 + \dots +\alpha_m=n-\alpha$. Also let $A_i$ be matrices such that

\

(H) $A_i$  and $A_i - A_j$ are  invertible for $1\leq i,j \leq m$ and $i\neq j$.

\

\noindent In the case where
$$k_i(x,y)=\frac 1{|x-A_iy|^{\alpha_i}}, $$
the  integral operator $T_{\alpha,m}$ satisfies the following Coifman--Fefferman inequality
\begin{equation}\label{cofe}
\int_{\mathbb{R}^n}|T_{\alpha,m}(f)(x)|^qw^q(x)\,dx\leq C_{w,q} \sum_{i=1}^m\int_{\mathbb{R}^n}| M_{\alpha ,A_i^{-1}}f(x)|^qw^q(x)\,dx,
\end{equation}
for all $0<q<\infty$,   $w^q$ a weight in the $\mathcal{A}_\infty$ Muckenhoupt class and where  $M_{\alpha,A^{-1}}$ is the   maximal  operator define by
\begin{equation}\label{defMA}
 M_{\alpha ,A^{-1}}f(x)=M_{\alpha }f(A^{-1}x)=\sup\limits_{Q\ni A^{-1}x }\frac{1}{%
\left\vert Q\right\vert ^{1-\frac{\alpha }{n}}}\int_{Q}f(y)dy.
\end{equation}
By a change of variable
\begin{equation}\label{acotMalphaA}
\int_{\mathbb{R}^n}| M_{\alpha ,A^{-1}}f(x)|^qw^q(x)\,dx%=\int_{\mathbb{R}^n}| M_{\alpha }f(A^{-1}x)|^qw^q(x)\,dx
=|\text{det}A| \int_{\mathbb{R}^n}| M_{\alpha }f(x)|^qw^q(Ax)\,dx.
\end{equation}
Now let   $1\leq p \leq q<\infty$, and $w$  in $\mathcal{A}_{ p,q}$, i.e.
\begin{align*}\label{pq}
[w]_{\mathcal{A}_{p,q}}:=&\underset{Q}{\sup} \left( \frac{1}{\left\vert Q\right\vert }\int\limits_{Q}w^{q}(x)dx\right) ^{%
\frac{1}{q}}\left( \frac{1}{\left\vert Q\right\vert }\int\limits_{Q}w^{-p^{%
\prime }}(x)dx\right) ^{\frac{1}{p^{\prime }}}\leq \infty, \quad &\text{ for } 1<p,\\
[w]_{\mathcal{A}_{1,q}}:=&\underset{Q}{\sup} \left( \frac{1}{\left\vert Q\right\vert }\int\limits_{Q}w^{q}(x)dx\right) ^{%
    \frac{1}{q}}\|w^{-1}\|_{\infty, Q}\leq \infty, & \text{ for } 1=p.
\end{align*}We say that $w^p\in \mathcal{A}_p$ if, and only if,  $w\in \mathcal{A}_{p,p}$, $1\leq p <\infty$, and $\mathcal{A}_\infty=\cup_p \mathcal{A}_p$.

For $p>1$, if we  request that $w(Ax)\leq Cw(x)$ and $w\in A_{p,q}$, then  in \eqref{acotMalphaA} we get
\begin{equation}\label{fuertepqMalphaA}
\int_{\mathbb{R}^n}| M_{\alpha ,A^{-1}}f(x)|^qw^q(x)\,dx \leq C_{A,q,p,w }\int_{\mathbb{R}^n}|f|^p w^p(x) \, dx.
\end{equation}
Finally in the equation \eqref{cofe} if $w\in \mathcal{A}_{p,q}$ and $w_{A_i}(x):= w(A_ix)\leq Cw(x)$ for all $1\leq i \leq m$, we have (see \cite{RU14})
\begin{equation}\label{fuerteTalphapq}
\int_{\mathbb{R}^n}|T_{\alpha,m}(f)(x)|^qw^q(x)\,dx\leq C_{A,q,p,w }\int_{\mathbb{R}^n}|f|^p w^p(x) \, dx.
\end{equation}
%In the similar way for $\alpha=0$, $w^p\in \mathcal{A}_p$, (this is  $w\in \mathcal{A}_{p,p}$), and $w(A_ix)\leq Cw(x)$ for all $1\leq i \leq m$ we get
%\begin{equation}\label{fuerteTalphap}
%\int_{\mathbb{R}^n}|T_{m}(f)(x)|^pw^p(x)\,dx\leq C_{A,p,w }\int_{\mathbb{R}^n}|f|^p w^p(x) \, dx.
%\end{equation}

In this paper we answer some of these questions: Is there  a characterization of  the   weights for the boundedness of  $M_{\alpha,A^{-1}}$ defined in \eqref{defMA}? If this characterization is obtain, are we able to present some weighted bounds as in  \eqref{fuerteTalphapq}?
For inequality  \eqref{fuerteTalphapq} $w^q\in \mathcal A_\infty$ is required. In some cases,  this requirement,  can be avoid?

To  study some of  these problems we define the next classes of weights. Let $A$ be an invertible matrix and $1\leq p \leq q < \infty$. % $1\leq p\leq \infty$, $1\leq q<\infty$.

A weight $w$ is in the class $\mathcal{A}_{A, p,q}$, if
\begin{align}\label{Apq}
[w]_{\mathcal{A}_{A,p,q}}:=&\underset{Q}{\sup} \left( \frac{1}{\left\vert Q\right\vert }\int\limits_{Q}w^{q}_A(x)dx\right) ^{%
\frac{1}{q}}\left( \frac{1}{\left\vert Q\right\vert }\int\limits_{Q}w^{-p^{%
\prime }}(x)dx\right) ^{\frac{1}{p^{\prime }}}\leq \infty,  \quad &\text{ for } p>1,\\
[w]_{\mathcal{A}_{A,1,q}}:=&\underset{Q}{\sup} \left( \frac{1}{\left\vert Q\right\vert }\int\limits_{Q}w^{q}_A(x)dx\right) ^{%
    \frac{1}{q}}\|w^{-1}\|_{\infty, Q}\leq \infty,  &\text{ for } p=1. \nonumber
\end{align}

 We obtain the following weak type $(p,q)$ characterization:
\begin{teo}\label{weakMalpha} Let $0\leq \alpha <n$, $1 \leq p< n/\alpha$ and $\frac1{q}=\frac1{p}-\frac{\alpha}{n}$. The
 maximal operator $M_{\alpha ,\,A^{-1}}$  is bounded from $L^p(w^p)$ into\label{key} $L^{q,\infty}(w^q)$ if, and only if, $w\in \mathcal{A}_{A, p,q}$. % is of weak type $p-q$ with respect to the weights $w^{p}-$ $w^{q}$  $
%If the fractional operator $M_{\alpha ,\,A^{-1}}$ is of weak type$p-q$ with respect to the weights $w^{p}-$ $w^{q}$ then $w\in \mathcal{A}_{A, p,q}. $
\end{teo}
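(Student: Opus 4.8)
The plan is to reduce the weighted weak-type $(p,q)$ inequality for $M_{\alpha,A^{-1}}$ to the known characterization for the ordinary fractional maximal operator $M_\alpha$ via the change of variables in \eqref{acotMalphaA}. Observe first that, since $M_{\alpha,A^{-1}}f(x)=M_\alpha f(A^{-1}x)$, for any $\lambda>0$ the distribution set satisfies
\begin{equation*}
\{x:M_{\alpha,A^{-1}}f(x)>\lambda\}=A\{y:M_\alpha f(y)>\lambda\},
\end{equation*}
so by the linear change $x=Ay$ we get
\begin{equation*}
\int_{\{M_{\alpha,A^{-1}}f>\lambda\}}w^q(x)\,dx=|\det A|\int_{\{M_\alpha f>\lambda\}}w^q(Ay)\,dy=|\det A|\int_{\{M_\alpha f>\lambda\}}w_A^q(y)\,dy.
\end{equation*}
Hence the weak $(p,q)$ bound for $M_{\alpha,A^{-1}}$ from $L^p(w^p)$ to $L^{q,\infty}(w^q)$ is \emph{equivalent} to the weak $(p,q)$ bound for $M_\alpha$ acting from $L^p(w_A^p)$ (after pushing $w^p\,dx$ forward, again by $x=Ay$ this becomes $|\det A|\,w_A^p(y)\,dy$... wait, careful: the left side norm involves $w^p$, not $w_A^p$) — so I should be more careful and keep the $w^p$ on the domain side untouched while transferring only the maximal function. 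Concretely, $M_{\alpha,A^{-1}}:L^p(w^p)\to L^{q,\infty}(w^q)$ bounded means: for all $f$ and all $\lambda>0$,
\begin{equation*}
\lambda\Big(\int_{\{M_{\alpha,A^{-1}}f>\lambda\}}w^q\Big)^{1/q}\le C\Big(\int|f|^pw^p\Big)^{1/p},
\end{equation*}
and by the change of variables just performed this is
\begin{equation*}
\lambda\Big(|\det A|\int_{\{M_\alpha f>\lambda\}}w_A^q\Big)^{1/q}\le C\Big(\int|f|^pw^p\Big)^{1/p}.
\end{equation*}

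Next I would run a second change of variables on the domain side, or rather note that it is cleaner to substitute $g(y)=f(Ay)$ from the start so that $M_\alpha$ is applied to an ordinary function; tracking the Jacobian through $M_\alpha(f\circ A)$ requires $M_\alpha$ to be essentially dilation/affine covariant up to the determinant, which it is (translations and the linear part both produce controllable factors). After this bookkeeping the inequality becomes exactly the statement that $M_\alpha$ is bounded from $L^p(v^p)$ into $L^{q,\infty}(u^q)$ for the pair of weights $u=w_A$ (on the maximal-function side) and a weight built from $w$ composed with $A$ on the domain side. This is a two-weight weak-type inequality for the fractional maximal operator, and the classical Sawyer-type / $A_{p,q}$-type characterization for $M_\alpha$ (Muckenhofft--Wheeden; see the single-weight reduction) says that the weak $(p,q)$ bound holds iff the relevant $A_{p,q}$ testing supremum over cubes is finite. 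Writing that supremum out and undoing the change of variables term by term produces exactly
\begin{equation*}
\sup_Q\Big(\frac1{|Q|}\int_Q w_A^q\Big)^{1/q}\Big(\frac1{|Q|}\int_Q w^{-p'}\Big)^{1/p'}=[w]_{\mathcal A_{A,p,q}}<\infty
\end{equation*}
for $p>1$, and the corresponding $L^\infty$ version for $p=1$. So the proof is really: (i) record the covariance identity for $M_{\alpha,A^{-1}}$; (ii) transfer both sides of the weak inequality by $x=Ay$; (iii) invoke the known weak-type $(p,q)$ characterization for $M_\alpha$; (iv) translate the resulting condition back, checking that the supremum over cubes is preserved because $A$ maps the family of cubes to a family of parallelepipeds comparable (with uniformly bounded eccentricity, hence comparable to cubes by a routine covering argument) — this last comparability is the only nontrivial point.

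Alternatively, and perhaps more self-contained, I would prove both implications directly on the cube family adapted to $A$. For necessity: test the weak inequality on $f=w^{-p'}\chi_Q$ (for $p>1$); on the cube $Q$ one has $M_{\alpha,A^{-1}}f(x)\gtrsim |Q|^{\alpha/n-1}\int_Q w^{-p'}$ for $x$ in a suitable dilate of $A(Q)$, choose $\lambda$ to be a small constant times this average, and the weak inequality gives $[w]_{\mathcal A_{A,p,q}}<\infty$ after simplification; the $p=1$ case is analogous with $f=\chi_Q$. For sufficiency: dominate $M_{\alpha,A^{-1}}$ by a dyadic (or adapted-grid) fractional maximal operator, linearize by a Calder\'on--Zygmund-type stopping family of maximal cubes $\{Q_j\}$ realizing the level set $\{M_{\alpha,A^{-1}}f>\lambda\}$ up to a constant, and estimate $w^q$ of the level set by $\sum_j w^q(\widetilde Q_j)$ where $\widetilde Q_j$ is the cube attached to $Q_j$; on each such cube apply H\"older with exponents $q$ and the $\mathcal A_{A,p,q}$ condition to pull out $[w]_{\mathcal A_{A,p,q}}$ and reduce to a disjointness/packing sum $\sum_j\int_{Q_j}|f|^pw^p\le\int|f|^pw^p$. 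The main obstacle in either route is the same: correctly handling the mismatch between the cube $Q$ where the average of $f$ is taken (governing $w^{-p'}$) and the set $A(Q)$ (or $A^{-1}$ applied, depending on which side one starts) where $w^q$ is integrated, i.e. making rigorous that $A$ carries the testing geometry from one side to the other without distorting the supremum over cubes — once that covariance of the relevant cube families is pinned down, everything reduces to the standard fractional-maximal weak-type argument.
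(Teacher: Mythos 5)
Your first route is essentially the paper's proof: the paper simply records the identity $w^q\{x: M_{\alpha,A^{-1}}f(x)>\lambda\} = |\det A|\, w_A^q\{x: M_{\alpha}f(x)>\lambda\}$ and then invokes the classical weak-type $(p,q)$ characterization for $M_\alpha$ (as in \cite{MW74}, \cite{M72}) applied to the pair $(w_A^q, w^p)$, whose cube condition is exactly $[w]_{\mathcal{A}_{A,p,q}}<\infty$. The detour you contemplate via the substitution $g=f\circ A$ (and the resulting worry about comparing the parallelepipeds $A^{-1}Q$ with cubes) is unnecessary: once the level set is transferred, $f$ and the domain weight $w^p$ are left untouched, so the classical two-weight argument applies verbatim and no geometric comparability issue arises.
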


This result follows in the same way as the classical one in \cite{MW74} or \cite{M72} taking into account that
    $$ w^q\{ x:M_{\alpha,A^{-1}}f(x)>\lambda\}=w^q_A\{ x:M_{\alpha}f(x)>\lambda\}.$$

% we get is a two weight  week inequality for the operator   $M_{\alpha,A^{-1}}$.
%\begin{proof}[Proof of Theorem \ref{weakMalpha}] Observe that given a weight $w$
%   $$ w^q\{ x:M_{\alpha,A^{-1}}f(x)>\lambda\}=w^q_A\{ x:M_{\alpha}f(x)>\lambda\},$$
%   then the theorem follows in the same way as the classical one in \cite{MW74} or \cite{M72}.
%\end{proof}

 In \cite{S82} E. Sawyer introduced the following definition:
\begin{mydef}
Let $0\leq \alpha < n$, $1<p\leq q <\infty$, let  $(u,v)$ be a pair of weights. The pair $(u,v)\in\mathcal{M}_{\alpha, p,q}$ if  satisfy the testing condition
$$[u,v]_{\mathcal{M}_{\alpha, p,q}}=\underset{Q}{\sup} \;\; v(Q)^{-1/p}\left(\int_Q M_{\alpha}(\chi_Q v)^{q}u\right)^{1/q}<\infty.$$
    \end{mydef}
For the classical   maximal operator $M_\alpha$    it is known the following two weight inequa\-lity:
\begin{teo}\cite{S82}
Let $0\leq \alpha < n$, $1<p\leq q <\infty$, let  $(u,v)$, be a pair of weights. The following statements  are equivalent:
%{\red este $\sigma$ es un peso cualquiera?  o es el $\sigma$ de otro peso?}
\begin{enumerate}[(i)]
\item $(u,v)\in \mathcal{M}_{\alpha, p,q}$
\item For every $f\in L^p(v)$,
$$\left(\int_{\mathbb{R}^{n}}M_{\alpha}(fv)^qu\right)^{1/q}\leq C_{n,p,\alpha}[u,v]_{\mathcal{M}_{\alpha, p,q}}\left(\int_{\mathbb{R}^{n}}|f|^pv\right)^{1/p}.$$
\end{enumerate}
\end{teo}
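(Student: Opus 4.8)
The plan is to establish the two implications by quite different means: $(ii)\Rightarrow(i)$ is a one-line testing argument, while $(i)\Rightarrow(ii)$ is Sawyer's theorem and requires a stopping-time argument. For $(ii)\Rightarrow(i)$ I would fix a cube $Q$, apply the inequality in $(ii)$ to $f=\chi_{Q}$, and use that $fv=\chi_{Q}v$; the right-hand side becomes $C_{n,p,\alpha}\,\|M_\alpha(\,\cdot\, v)\|_{L^{p}(v)\to L^{q}(u)}\,v(Q)^{1/p}$ and the left-hand side dominates $\bigl(\int_{Q}M_\alpha(\chi_{Q}v)^{q}u\bigr)^{1/q}$, so dividing and taking the supremum over $Q$ shows $[u,v]_{\mathcal M_{\alpha,p,q}}\lesssim\|M_\alpha(\,\cdot\, v)\|_{L^{p}(v)\to L^{q}(u)}<\infty$.

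For $(i)\Rightarrow(ii)$ I would proceed in three steps. First, reduce to a dyadic model: by the three-lattice lemma $M_\alpha f\lesssim\sum_{t=1}^{3^{n}}M_\alpha^{\mathcal{D}^{t}}f$ for finitely many dyadic grids $\mathcal{D}^{t}$, where $M_\alpha^{\mathcal{D}}$ is the dyadic fractional maximal operator; since the testing constant over arbitrary cubes controls the one over any fixed dyadic grid, it suffices to bound $\|M_\alpha^{\mathcal{D}}(fv)\|_{L^{q}(u)}$ for one grid and $f\ge0$. Second, extract from the hypothesis the pointwise consequence: since $M_\alpha^{\mathcal{D}}(\chi_{Q}v)\ge v(Q)/|Q|^{1-\alpha/n}$ on $Q$, the testing condition gives, for all cubes $Q$,
\[
\Bigl(\frac{v(Q)}{|Q|^{1-\alpha/n}}\Bigr)^{q}u(Q)\ \le\ [u,v]_{\mathcal M_{\alpha,p,q}}^{q}\,v(Q)^{q/p},
\]
which, combined with Hölder's inequality $\int_{Q}fv\le\bigl(\int_{Q}f^{p}v\bigr)^{1/p}v(Q)^{1/p'}$, lets me bound any $\alpha$-average of $fv$ on $Q$ in terms of $[u,v]_{\mathcal M_{\alpha,p,q}}$ and $\int_{Q}f^{p}v$. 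Third, run a Calder\'on--Zygmund stopping time: with a large fixed $a$, decompose $\Omega_{k}=\{M_\alpha^{\mathcal{D}}(fv)>a^{k}\}$ into maximal dyadic cubes $\{Q_{j}^{k}\}$, set $E_{j}^{k}=Q_{j}^{k}\setminus\Omega_{k+1}$ (pairwise disjoint), and note that maximality forces $a^{k}<\langle fv\rangle_{Q_{j}^{k},\alpha}\le 2^{n(1-\alpha/n)}a^{k}$, so that $\{Q_{j}^{k}\}$ is a sparse family; then
\[
\int_{\R}M_\alpha^{\mathcal{D}}(fv)^{q}u\ \lesssim\ \sum_{k,j}a^{kq}\,u(E_{j}^{k}),
\]
and I would estimate this sum by grouping the stopping cubes into coronas (maximal blocks on which the $\alpha$-averages of $fv$ remain comparable), applying the second step on each stopping cube, and using $\ell^{p}\hookrightarrow\ell^{q}$ (valid since $p\le q$) to sum the resulting geometric series. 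Undoing the first step then produces $(ii)$ with a constant of the form $C_{n,p,\alpha}[u,v]_{\mathcal M_{\alpha,p,q}}$.

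The hard part is the last step. A crude estimate --- replacing $u(E_{j}^{k})$ by $u(Q_{j}^{k})$, applying the second step term by term, and summing $\bigl(\int_{Q_{j}^{k}}f^{p}v\bigr)^{q/p}$ --- overcounts by a logarithmic factor, since $\int_{Q_{j}^{k}}f^{p}v$ need not decay along a nested chain of stopping cubes. Sawyer's idea, which I would reproduce, is the corona bookkeeping: it preserves the genuinely geometric nature of $\sum_{k,j}a^{kq}u(E_{j}^{k})$ and absorbs each $E_{j}^{k}$ into its ancestors without this loss. Everything else --- the three-lattice reduction, the H\"older and testing manipulations --- is routine.
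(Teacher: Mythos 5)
Your sketch is measured here against the classical argument, since the paper does not prove this statement but quotes it from \cite{S82}. The skeleton you describe is the right one: the direction $(ii)\Rightarrow(i)$ by testing $f=\chi_Q$ is correct; so are the reduction to finitely many dyadic grids, the decomposition of $\Omega_k=\{M_\alpha^{\mathcal D}(fv)>a^k\}$ into maximal cubes $Q_j^k$ with $a^k<|Q_j^k|^{\alpha/n-1}\int_{Q_j^k}fv\le 2^{n-\alpha}a^k$, the disjointness of $E_j^k=Q_j^k\setminus\Omega_{k+1}$, and the bound $\int M_\alpha^{\mathcal D}(fv)^qu\lesssim\sum_{k,j}a^{kq}u(E_j^k)$. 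You also correctly diagnose that the naive term-by-term estimate overcounts. The genuine gap is in the only step that carries the content of the theorem: as written, the full testing condition never enters your final summation. Your ``second step'' extracts from testing only the two-weight $A_{p,q}$-type consequence $\bigl(v(Q)|Q|^{\alpha/n-1}\bigr)^qu(Q)\le[u,v]_{\mathcal M_{\alpha,p,q}}^qv(Q)^{q/p}$, and your third step proposes to apply exactly this, cube by cube, inside the coronas. That route cannot close: this $A_{p,q}$-type condition characterizes the \emph{weak} type and is strictly weaker than the testing condition, and no corona bookkeeping repairs a cube-by-cube use of it, because $\sum_{Q_j^k\subset P}v(Q_j^k)^{q/p}$ is in general not comparable to $v(P)^{q/p}$ (nested stopping cubes may carry comparable $v$-mass for arbitrarily many generations), so the same loss you flagged reappears.

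What the actual proof does at this point is apply the full testing inequality on the corona \emph{top} cubes $P$, using the pairwise disjointness of the $E_j^k$ to reassemble the test function: for $x\in E_j^k\subset Q_j^k\subset P$ one has $M_\alpha(\chi_Pv)(x)\ge|Q_j^k|^{\alpha/n-1}v(Q_j^k)$, hence
\begin{equation*}
\sum_{Q_j^k\subset P}\bigl(|Q_j^k|^{\alpha/n-1}v(Q_j^k)\bigr)^{q}\,u(E_j^k)\;\le\;\int_P M_\alpha(\chi_Pv)^q\,u\;\le\;[u,v]_{\mathcal M_{\alpha,p,q}}^q\,v(P)^{q/p},
\end{equation*}
while the remaining factor $\bigl(v(Q_j^k)^{-1}\int_{Q_j^k}fv\bigr)^q$ is frozen at $\lesssim\bigl(v(P)^{-1}\int_Pfv\bigr)^q$ by the corona construction; one then sums over the tops using $\ell^{p/q}$ (here $p\le q$) together with the $v$-Carleson property of the principal cubes, equivalently the $L^p(v)$-boundedness of the $v$-weighted dyadic maximal operator. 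Note also that the coronas must be built from the $v$-averages $v(Q)^{-1}\int_Qfv$, not from the $\alpha$-averages of $fv$ as in your parenthetical description --- the latter are already $\simeq a^k$ on the stopping cubes by construction, so grouping by them produces no usable Carleson family. Until this mechanism (full testing at the tops plus disjointness of the $E_j^k$) is written out, your outline establishes only what the $A_{p,q}$-type condition gives, namely the weak-type estimate, and the key implication $(i)\Rightarrow(ii)$ remains unproved.
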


%Observe that, if we take $u=w^q_A$ and $v=w^{-p'}$, and if  $(w^q_A, %w^{-p'})$  satisfies the testing condition (this is $ %\mathcal{M}_{\alpha,A,w}:= \mathcal{M}_{\alpha,w^q_A,w^{-p'}}<\infty$)   %then for  $g=fv=f w^{-p'}$ we get
%$$\left(\int_{\mathbb{R}^{n}}M_{\alpha,A^{-1}}(g)^qw^q\right)^{1/q}\leq %C_{n,p,\alpha}\mathcal{M}_{\alpha, A, w}\left(\int_{\mathbb{R}^{n}}|g|^p %w^p\right)^{1/p}.$$
 \begin{mydef} Let $w$ be a weight, we say $w\in \mathcal{M}_{\alpha,A, p,q}$  if $(w^q_A, w^{-p'})\in \mathcal{M}_{\alpha,p,q}$ and $w\in \mathcal{M}_{\alpha,A, q',p'}$  if $(w^{-p'},w^q_A )\in \mathcal{M}_{\alpha,q',p'}$. For $\alpha=0$ and $p=q$, we say $w\in \mathcal{M}_{A, p}:=\mathcal{M}_{0,A, p,p}$.
 \end{mydef}
\begin{remark}\label{RemtestimplicaAApq}
    Also if $w\in \mathcal{M}_{\alpha,A, p,q}$ and  $\frac1p-\frac1q=\frac{\alpha}{ n}$, we have that  $w \in \mathcal{A}_{A,p,q}$ and $[w]_{\mathcal{A}_{A,p,q}}\leq [w]_{\mathcal{M}_{\alpha,A, p,q}}$.
\end{remark}

Then as a corollary of the previous theorem we have,
\begin{corol}\label{CaracPesosMA} Let $0\leq \alpha<n$,  $1<p<\frac n\alpha$  and $\frac 1q=\frac 1p-\frac \alpha n$.
The weight $w\in \mathcal{M}_{\alpha,A, p,q}$ if, and only if,
$$\left(\int_{\mathbb{R}^{n}}M_{\alpha,A^{-1}}(g)^qw^q\right)^{1/q}\leq C_{n,p,\alpha}[w]_{\mathcal{M}_{\alpha,A, p,q}}\left(\int_{\mathbb{R}^{n}}|g|^p w^p\right)^{1/p},$$
  for  $g=fv=f w^{-p'}\in L^p(w^p)$, and where $[w]_{\mathcal{M}_{\alpha,A, p,q}}:=[w^q_A, w^{-p'}]_{\mathcal{M}_{\alpha, p,q}}.$
\end{corol}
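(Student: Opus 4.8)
The plan is to deduce Corollary \ref{CaracPesosMA} directly from Sawyer's two-weight theorem (the theorem of \cite{S82} quoted just above) by unwinding the definitions and applying the change-of-variables identity \eqref{acotMalphaA}. Specifically, I would set $u=w^q_A$ and $v=w^{-p'}$. By definition $w\in\mathcal{M}_{\alpha,A,p,q}$ means exactly that $(u,v)\in\mathcal{M}_{\alpha,p,q}$, with $[w]_{\mathcal{M}_{\alpha,A,p,q}}:=[u,v]_{\mathcal{M}_{\alpha,p,q}}$. Sawyer's theorem then gives the equivalence of this testing condition with the inequality
\[
\left(\int_{\mathbb{R}^{n}}M_{\alpha}(fv)^q u\right)^{1/q}\leq C_{n,p,\alpha}[u,v]_{\mathcal{M}_{\alpha,p,q}}\left(\int_{\mathbb{R}^{n}}|f|^p v\right)^{1/p},\qquad f\in L^p(v).
\]
So the whole content of the corollary is to rewrite both sides of this inequality in the ``$A$-twisted'' form that appears in the statement, for the admissible functions $g=fv=fw^{-p'}$.

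Next I would handle the two sides separately. For the right-hand side, with $g=fw^{-p'}$ one has $|f|^p v=|f|^p w^{-p'}$; since $f=g w^{p'}=g w^{p'}$, a direct computation gives $|f|^p w^{-p'}=|g|^p w^{p'p}w^{-p'}=|g|^p w^{p'(p-1)}=|g|^p w^{p}$, using $p'(p-1)=p$. Hence $\int |f|^p v=\int |g|^p w^p$, which matches the right-hand side of the corollary. For the left-hand side, $fv=g$, so $M_\alpha(fv)=M_\alpha(g)$ and $\int M_\alpha(fv)^q u=\int M_\alpha(g)^q w^q_A$. Now I invoke the identity behind \eqref{acotMalphaA}, namely the pointwise relation $M_{\alpha,A^{-1}}h(x)=M_\alpha h(A^{-1}x)$ from \eqref{defMA}, together with the change of variables $x\mapsto Ax$: since $w^q_A(x)=w^q(Ax)$, we get $\int M_\alpha(g)(x)^q w^q(Ax)\,dx = |\det A|^{-1}\int M_\alpha(g)(A^{-1}y)^q w^q(y)\,dy=|\det A|^{-1}\int M_{\alpha,A^{-1}}(g)(y)^q w^q(y)\,dy$. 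So up to the harmless constant $|\det A|^{\pm 1/q}$, which can be absorbed into $C_{n,p,\alpha}$, the left-hand side of Sawyer's inequality equals the left-hand side of the corollary; one should also note the domain condition, since $g=fw^{-p'}\in L^p(w^p)$ is equivalent to $f\in L^p(v)$ by the computation above, so the admissible $g$'s are exactly the ones Sawyer's theorem applies to.

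Finally I would assemble these two substitutions: $(i)\Leftrightarrow(ii)$ in Sawyer's theorem, read through the dictionary $u=w^q_A$, $v=w^{-p'}$, $g=fv$, becomes precisely the stated equivalence between $w\in\mathcal{M}_{\alpha,A,p,q}$ and the weighted bound for $M_{\alpha,A^{-1}}$. I do not expect a genuine obstacle here — the corollary is essentially a change-of-notation packaging of Sawyer's theorem — so the only things to be careful about are bookkeeping: the exponent arithmetic $p'(p-1)=p$, keeping track of the factor $|\det A|$ (and why it can be dropped into the constant), and checking that the class $\mathcal{M}_{\alpha,A,p,q}$ has been set up so that $[w]_{\mathcal{M}_{\alpha,A,p,q}}=[w^q_A,w^{-p'}]_{\mathcal{M}_{\alpha,p,q}}$ is literally the Sawyer constant for the pair $(u,v)$, which is exactly how the definition is phrased.
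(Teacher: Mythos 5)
Your proposal is correct and is essentially the argument the paper intends: the corollary is stated as an immediate consequence of Sawyer's theorem applied to the pair $(u,v)=(w^q_A,w^{-p'})$, combined with the change-of-variables identity behind \eqref{acotMalphaA} and the exponent computation $p'(p-1)=p$, exactly as you do. Your only imprecision is cosmetic: absorbing $|\det A|^{1/q}$ into $C_{n,p,\alpha}$ makes the constant depend on $A$, a dependence the paper's statement also leaves implicit.
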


%bserve now if $\alpha =0$, and $p=q$ we can write the previous theorem and remark as follows
%\begin{corol}
%Let $A$ be an invertible matrix and let $w$ be  a weight. The following are equivalent
%\begin{enumerate}[(i)]
%\item $w\in \mathcal{M}_{A, p}$, this is:
%$$\underset{Q}{\sup\;} \frac1{w^{-p'}(Q)}\int_Q M_{A^{-1}}(w^{-p'} \chi_Q)^pw^p<\infty.$$
%\item The operator $M_{A^{-1}}$ is bounded from
%$:L^p(w^p)$ into $L^{p}(w^p)$
%\end{enumerate}
%\end{corol}

In this paper we will  prove $L^p(w^p)$ into $L^q(w^q)$ bounds for
 integral operators defined in \eqref{eq: defT}, with kernels
satisfying a  fractional size,  $S_{\alpha,r}$, and a fractional
H\"ormander, $H_{\alpha,r}$, condition (for definitions see Section
2), without using  $w\in \mathcal{A}_\infty$. To obtain these
results we will use  the sparse domination technique. In the last
years this technique was used to obtain    sharp weighted norm
inequalities for singular or fractional integrals operators, for
example see \cite{L16}  and  \cite{AMPRR17}.

In the case for the  integral operator $T_{\alpha,m}$ with some particular matrices $A_i$ we obtain a norm estimate relative to  the constant of the weight.

\begin{teo}\label{APart}
    Let $0\leq \alpha < n$, $m \in \mathbb{N}$ and let  $T_{\alpha,m}$
    be the integral operator defined by (\ref{eq: defT}). For $1\leq i
    \leq m$, let $1<r_i\leq\infty$ and $0\leq\alpha_i<n$
    such that $\alpha_1+\cdots + \alpha_m=n-\alpha$. Let $k_i \in
    S_{n-\alpha_i, r_i}\cap H_{n-\alpha_i,r_i}$ and let the
    matrices $A_i$ satisfy  the hypothesis $(H)$.

    \noindent If $\alpha=0$, suppose $T_{0,m}$ be of strong type $(p_0,p_0)$ for some $1<p_0<\infty$.

    If there exists $s\geq 1$ such that $\frac1{r_1}+\cdots +\frac1{r_m}+\frac1{s}=1$,  $A_j=A_i^{-1}$ for some $j\not=i$, $s<p< \frac{n}{\alpha}$, $\frac1{q}=\frac1{p}-\frac{\alpha}{n}$ and $w^s\in \bigcap_{i=1}^m\mathcal{A}_{A_i,\frac{p}{s},\frac{q}{s}}$ for all $1\leq i \leq m$, then

    $$
    \|T_{\alpha,m}f\|_{L^q(w^q)}\leq C \|f\|_{L^p(w^p)}\sum_{i=1}^m[w^s]_{\mathcal{A}_{A_i,\frac{p}{s},\frac{q}{s}}}^{\max\left\{1-\frac{\alpha}{n},\frac{(p/s)'}{q}\left(1-\frac{\alpha s}{n}\right)\right\}}.$$

\end{teo}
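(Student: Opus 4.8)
The plan is to combine the pointwise sparse domination of $T_{\alpha,m}$ by a sum of fractional sparse operators — which is exactly the mechanism behind \eqref{cofe} and the results of \cite{IFRi18} — with the weighted estimate for $M_{\alpha,A^{-1}}$ that follows from the $\mathcal{A}_{A,p,q}$-type hypothesis. First I would invoke the (earlier, cited) domination result: under $k_i\in S_{n-\alpha_i,r_i}\cap H_{n-\alpha_i,r_i}$ and hypothesis $(H)$, for each compactly supported $f$ there are sparse families $\mathcal{S}_i$ (one for each matrix $A_i$) so that
\begin{equation*}
|T_{\alpha,m}f(x)|\leq C\sum_{i=1}^m \sum_{Q\in\mathcal{S}_i}\Big(\frac{1}{|Q|^{1-\alpha/n}}\int_{A_i Q}|f|\Big)\chi_{A_i^{-1}Q}(x),
\end{equation*}
the point being that the $m$ kernel factors and the $1/s$ of ``room'' left by $\frac1{r_1}+\cdots+\frac1{r_m}+\frac1s=1$ let one absorb the product structure into a single fractional average adapted to the lattice $A_i^{-1}\mathcal{D}$. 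Testing this against $g\in L^{q'}(w^q)$ by duality reduces the theorem to a weighted bound for the fractional sparse operator $\mathcal{A}_{\mathcal{S}_i,\alpha}$ in the lattice twisted by $A_i$.

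The heart of the matter is then the single-lattice estimate: one must show that for $w^s\in\mathcal{A}_{A_i,p/s,q/s}$,
\begin{equation*}
\Big\|\sum_{Q\in\mathcal{S}_i}\Big(\frac{1}{|Q|^{1-\alpha/n}}\int_{A_iQ}|f|\Big)\chi_{A_i^{-1}Q}\Big\|_{L^q(w^q)}\leq C\,[w^s]_{\mathcal{A}_{A_i,p/s,q/s}}^{\theta}\|f\|_{L^p(w^p)},
\end{equation*}
with $\theta=\max\{1-\alpha/n,\ \tfrac{(p/s)'}{q}(1-\alpha s/n)\}$. Here I would change variables $x\mapsto A_i^{-1}x$ (as in \eqref{acotMalphaA}) to move everything to the standard dyadic lattice, at the cost of replacing $w$ by $w_{A_i}$ in the ``upper'' measure and producing a factor $|\det A_i|$; the resulting problem is the now-classical weighted bound for the fractional sparse/dyadic operator with a pair of weights $(w_{A_i}^q,w^{-p'})$ satisfying a joint $\mathcal A_{p,q}$ condition. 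Since we have the extra exponent $s$ (i.e. $p/s$, $q/s$ in place of $p,q$), I would first handle the case $s=1$ by the standard argument — split $f=fw^{p}\cdot w^{-p}$, run the sparse sum against a maximal function, and track the $\mathcal A_{p,q}$ constant to get the two-term max in $\theta$ from the known sharp fractional sparse estimate — and then recover general $s$ by raising to the power $s$ and applying the $s=1$ result with exponents $p/s,q/s$ to $|f|^{s}$ (using $\|f\|_{L^p(w^p)}^s=\||f|^s\|_{L^{p/s}(w^{sp/s})}$, which is why the hypothesis is phrased with $w^s$). The exponent $\theta$ is then inherited verbatim from the sharp fractional sparse bound, and summing over $i=1,\dots,m$ gives the displayed constant.

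Two technical points deserve care and are where I expect the real work. The first is that the condition $A_j=A_i^{-1}$ for some $j\neq i$ is what guarantees the ``twisted'' lattice $A_i^{-1}\mathcal D$ interacts correctly with the hypothesis $(H)$ and with the $\mathcal A_{A_i,\cdot,\cdot}$ class for \emph{each} relevant index — so I would check that the domination really produces sparse families over the lattices $A_i^{-1}\mathcal D$ and that the pair $(w_{A_i}^q,w^{-p'})$ is the correct one (this is precisely the content of Remark \ref{RemtestimplicaAApq} and Theorem \ref{weakMalpha}, which tell us $\mathcal A_{A_i,p/s,q/s}$ is the right class). The second, and the genuine obstacle for $\alpha=0$, is that the fractional machinery degenerates ($q=p$, no integrability gain from $|Q|^{-\alpha/n}$), so the plain sparse-to-maximal bound is not available; this is exactly why the hypothesis assumes a priori that $T_{0,m}$ is of strong type $(p_0,p_0)$. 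In that case I would instead use the sparse domination to run an extrapolation/interpolation argument starting from the $(p_0,p_0)$ bound together with the $\mathcal A_\infty$-type Coifman–Fefferman control \eqref{cofe}, or bound the sparse sum by the dyadic maximal operator $M$ directly and invoke its known sharp $L^p(w^p)$ bound, again bookkeeping the $\mathcal A_{A_i,p/s,p/s}$ constant to the power $\max\{1,(p/s)'/p\}=(p/s)'/p\cdot$(trivial factor), which matches $\theta$ at $\alpha=0$. Assembling these pieces, summing over $i$, and undoing the duality completes the proof.
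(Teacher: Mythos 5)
Your overall skeleton --- sparse domination of $T_{\alpha,m}$, then a sharp weighted bound for each sparse piece, with the exponent $s$ handled by applying the $s=1$ theory to $|f|^s$ --- is the same as the paper's (Theorem \ref{Sparse} followed by the estimate \eqref{eq: desig2}). But there is a genuine gap at the step you yourself call the heart of the matter. After the change of variables you reduce to ``the now-classical weighted bound for the fractional sparse operator with a pair of weights $(w_{A_i}^q,w^{-p'})$ satisfying a joint $\mathcal{A}_{p,q}$ condition''. No such classical result exists: for a genuinely two-weight pair, the joint $\mathcal{A}_{p,q}$ condition is necessary but not sufficient for the strong-type bound (this is precisely why Sawyer-type testing conditions are needed in Theorems \ref{FuerteT} and \ref{SparseTesting}), so the single-lattice estimate with exponent $\max\left\{1-\frac{\alpha}{n},\frac{(p/s)'}{q}\left(1-\frac{\alpha s}{n}\right)\right\}$ cannot simply be quoted. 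The hypothesis $A_j=A_i^{-1}$ is exactly what repairs this, and you misidentify its role: it has nothing to do with compatibility of the twisted lattice with hypothesis $(H)$ --- the sparse domination of Theorem \ref{Sparse} requires no such assumption --- but rather, combined with $w^s\in\mathcal{A}_{A_i,\frac{p}{s},\frac{q}{s}}\cap\mathcal{A}_{A_j,\frac{p}{s},\frac{q}{s}}$, it forces $w_{A_i}\simeq w$ and $w^s\in\mathcal{A}_{\frac{p}{s},\frac{q}{s}}$, hence $w^q$ and $w^{-s(p/s)'}$ belong to $A_\infty$. It is this $A_\infty$ information, reducing matters essentially to the one-weight situation as in \cite{IFRV18}, that yields \eqref{eq: desig2}; without it your key estimate is unproved and false in general.

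Two smaller points. The domination you display, with plain $L^1$-averages $|Q|^{\alpha/n-1}\int_{A_iQ}|f|$ and indicator $\chi_{A_i^{-1}Q}$, is not what the hypotheses give: since the $k_i$ only lie in $S_{n-\alpha_i,r_i}\cap H_{n-\alpha_i,r_i}$, the sparse operator must carry $L^s$-averages over the cubes $Q$ themselves, namely $\mathcal{A}_{\alpha,s,\mathscr{S}}f(A_i^{-1}x)=\sum_{Q\in\mathscr{S}}|Q|^{\alpha/n}\|f\|_{s,Q}\chi_Q(A_i^{-1}x)$, as in Theorem \ref{Sparse}; your later power trick implicitly acknowledges this, but the displayed pointwise bound is wrong as stated. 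Also, your concern that the machinery degenerates at $\alpha=0$ is misplaced: the a priori strong $(p_0,p_0)$ hypothesis is used only to obtain the sparse domination itself (through the endpoint estimates for the grand maximal truncation in Lemma \ref{lemasparse2}), while the quantitative bound \eqref{eq: desig2} for the sparse operator covers $\alpha=0$ as well, so no separate extrapolation from $(p_0,p_0)$ is needed at that stage.
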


    \begin{remark}
        Observe that if $A_j=A_i^{-1}$ for some $j\not=i$ and $w^s\in \mathcal{A}_{A_i,\frac{p}{s},\frac{q}{s}}\cap \mathcal{A}_{A_j,\frac{p}{s},\frac{q}{s}}$,
        then  $w^s\in \mathcal{A}_{\frac{p}{s},\frac{q}{s}}$ and $w_{A_j}\simeq w$. Then, in this case,  $w^q$ and $w^{-s(p/s)'}$ belongs to  $A_{\infty}$.
    \end{remark}

 In the case of the  integral operator
\begin{equation}\label{Talpha2}
 T_{\alpha, 2}f(x)=\int \frac{f(y)}{|x-A_1y|^{\alpha_1}|x-A_2y|^{\alpha_2 }}dy,
 \end{equation}
we have the following characterization.

\begin{teo}\label{Ejem}
Let $0\leq \alpha<n$, $0\leq \alpha_1,\alpha_2<n$ such that $\alpha_1+\alpha_2=n-\alpha$. Let $1<p<n/\alpha$ and $\frac1{q}=\frac1{p}-\frac{\alpha}{n}$. Let $A_1,A_2$ be a invertible matrices such that $A_1-A_2$ is invertible.
Let $T_{\alpha, 2}$ be the  integral operator defined by \eqref{Talpha2}.
Let $w$ be a weight. If $T_{\alpha, 2}:L^p(w^p)\to L^q(w^q)$ then $w\in \mathcal{A}_{A_1,p,q} \cap \mathcal{A}_{A_2,p,q}$.\\
Furthermore, if $A_2=A_1^{-1}$ or $A_1=-I$ and $A_2=I$, then $T_{\alpha, 2}:L^p(w^p)\to L^q(w^q)$ if and only if $w\in \mathcal{A}_{A_1,p,q} \cap \mathcal{A}_{A_2,p,q}$.

\end{teo}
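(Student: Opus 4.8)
The plan is to prove the two implications separately; throughout write $\sigma=w^{-p'}$.

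\emph{Necessity.} Suppose $T_{\alpha,2}\colon L^p(w^p)\to L^q(w^q)$ is bounded. Given a cube $Q$ I would test the inequality on $f_Q=\chi_Q\sigma$. Since $\|f_Q\|_{L^p(w^p)}^p=\int_Q w^{p-pp'}=\sigma(Q)$, it is enough to bound $\|T_{\alpha,2}f_Q\|_{L^q(w^q)}$ from below over the parallelepiped $A_1Q$ (for the $\mathcal A_{A_1,p,q}$ condition) and over $A_2Q$ (for $\mathcal A_{A_2,p,q}$). If $x=A_1z\in A_1Q$ with $z\in Q$, then $|x-A_1y|\le\|A_1\|\operatorname{diam}Q\lesssim\ell(Q)$ for all $y\in Q$, so the first kernel factor is $\gtrsim\ell(Q)^{-\alpha_1}$, while $|x-A_2y|=|A_1z-A_2y|\le|(A_1-A_2)c_Q|+C\ell(Q)$. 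Hence, for cubes $Q$ with $|(A_1-A_2)c_Q|\lesssim\ell(Q)$, one gets $T_{\alpha,2}f_Q(x)\gtrsim\ell(Q)^{-(n-\alpha)}\sigma(Q)$ on $A_1Q$, and boundedness yields $\ell(Q)^{-(n-\alpha)}\sigma(Q)\,w^q(A_1Q)^{1/q}\lesssim\|T_{\alpha,2}\|\,\sigma(Q)^{1/p}$. Using $\tfrac{n}{p'}+\tfrac{n}{q}=n-\alpha$, the change of variables $\int_Q w_{A_1}^q=|\det A_1|^{-1}\int_{A_1Q}w^q$ as in \eqref{acotMalphaA}, and $|Q|^{1/p'}|A_1Q|^{1/q}=|\det A_1|^{1/q}\ell(Q)^{n-\alpha}$, this rearranges exactly to $\big(\tfrac1{|Q|}\int_Q w_{A_1}^q\big)^{1/q}\big(\tfrac1{|Q|}\int_Q\sigma\big)^{1/p'}\lesssim\|T_{\alpha,2}\|$, and symmetrically for $A_2$. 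The point I expect to be the main obstacle is removing the restriction on $Q$: once $Q$ is far from the origin relative to $\ell(Q)$ the sets $A_1Q$ and $A_2Q$ are separated, the product kernel is nowhere uniformly large, and the naive test degrades by $(|(A_1-A_2)c_Q|/\ell(Q))^{\alpha_i}$. Obtaining the estimate for \emph{all} cubes — i.e. full membership in $\mathcal A_{A_i,p,q}$ — is the delicate step; it can be handled by passing through the weak-type characterization of Theorem~\ref{weakMalpha} and a covering/self-improvement argument on the Calderón--Zygmund cubes of $M_\alpha f$.

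\emph{Sufficiency when $A_2=A_1^{-1}$.} If $w\in\mathcal A_{A_1,p,q}\cap\mathcal A_{A_1^{-1},p,q}$, the remark following Theorem~\ref{APart} (with $s=1$) gives $w\in\mathcal A_{p,q}$ and $w_{A_1}\simeq w$, whence also $w_{A_2}=w_{A_1^{-1}}\simeq w$. Thus $w\in\mathcal A_{p,q}$ and $w_{A_i}(x)\le Cw(x)$ for $i=1,2$, so \eqref{fuerteTalphapq} applies directly and $T_{\alpha,2}\colon L^p(w^p)\to L^q(w^q)$.

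\emph{Sufficiency when $A_1=-I$, $A_2=I$.} Here $\mathcal A_{A_2,p,q}=\mathcal A_{p,q}$, so $w\in\mathcal A_{p,q}$ and in particular $w^q\in A_\infty$; hence the Coifman--Fefferman inequality \eqref{cofe} gives $\|T_{\alpha,2}f\|_{L^q(w^q)}^q\lesssim\|M_{\alpha,-I}f\|_{L^q(w^q)}^q+\|M_{\alpha}f\|_{L^q(w^q)}^q$. The $M_\alpha$ term is $\lesssim\|f\|_{L^p(w^p)}^q$ since $w\in\mathcal A_{p,q}$. For the other term, reflecting cubes shows $M_{\alpha,-I}f(x)=M_\alpha\big(f(-\,\cdot)\big)(x)$, so by Corollary~\ref{CaracPesosMA} it suffices to verify $w\in\mathcal M_{\alpha,-I,p,q}$, i.e. $(w_{-I}^q,w^{-p'})\in\mathcal M_{\alpha,p,q}$. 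This is where $w\in\mathcal A_{-I,p,q}$ enters: in the testing quantity $\sigma(Q)^{-1/p}\big(\int_Q M_\alpha(\sigma\chi_Q)^q w_{-I}^q\big)^{1/q}$ the contribution of cubes containing $Q$ equals, after normalization by $\tfrac{n}{p'}+\tfrac{n}{q}=n-\alpha$, a constant times $[w]_{\mathcal A_{-I,p,q}}$, while the local part (cubes contained in $Q$) is absorbed using that $\sigma=w^{-p'}$ and $w_{-I}^q$ both belong to $A_\infty$ (which holds because $w$, hence $w_{-I}$, lies in $\mathcal A_{p,q}$). The main obstacle here is precisely this implication $\mathcal A_{-I,p,q}\cap\mathcal A_{p,q}\Rightarrow\mathcal M_{\alpha,-I,p,q}$; alternatively, in even dimensions one may conjugate by an orthogonal matrix and a matrix $J$ with $J^2=-I$ to reduce to the case $A_2=A_1^{-1}$.
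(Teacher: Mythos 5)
Your necessity argument follows the same scheme as the paper's (testing on $f_Q=\chi_Q\sigma$, which is the paper's $T_{\alpha,2}(\chi_{B_1}v)$ with $B=A_1Q$, and lower-bounding the kernel on $A_iQ$), but as written it does not prove the stated theorem: you obtain the $\mathcal{A}_{A_i,p,q}$ inequality only for cubes with $|(A_1-A_2)c_Q|\lesssim\ell(Q)$, and the sentence promising to remove this restriction ``by passing through Theorem~\ref{weakMalpha} and a covering/self-improvement argument on Calder\'on--Zygmund cubes'' is a placeholder, not an argument. It is also not clear how that route could close the gap: Theorem~\ref{weakMalpha} characterizes the weights for which $M_{\alpha,A_i^{-1}}$ is of weak type, and the hypothesis is boundedness of $T_{\alpha,2}$, which yields information about $M_{\alpha,A_i^{-1}}$ only through exactly the kernel lower bound you are missing, so the suggestion is essentially circular. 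In the paper no restriction on the cubes appears: for $x\in B$, $y\in A_1^{-1}B$ the region where $|x-A_1y|\le|x-A_2y|$ is split into the shells $|x-A_2y|\sim 2^{j}|x-A_1y|$, and after summing the series $\sum_j 2^{(\alpha-n)j}$ the authors assert the pointwise bound $K(x,y)\ge C_{n,\alpha,A}R^{\alpha-n}$ on all of $B\times A_1^{-1}B$, which gives the testing estimate for every ball and hence membership in $\mathcal{A}_{A_1,p,q}$ (and symmetrically $\mathcal{A}_{A_2,p,q}$). You have correctly identified the delicate point of that step (when $|(I-A_2A_1^{-1})c_B|\gg R$ the factor $|x-A_2y|^{-\alpha_2}$ is genuinely small, so the uniformity of the lower bound is exactly what must be justified), but flagging the difficulty is not resolving it; relative to the theorem, which requires the condition for \emph{all} cubes, your necessity proof is incomplete.

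The sufficiency for $A_2=A_1^{-1}$ is correct and coincides with the paper's argument. For $A_1=-I$, $A_2=I$ you take a detour that contains a second unproved step: the implication $\mathcal{A}_{-I,p,q}\cap\mathcal{A}_{p,q}\Rightarrow w\in\mathcal{M}_{\alpha,-I,p,q}$, which you yourself call the main obstacle; the ``local part'' of that testing condition is precisely a Sawyer-type self-improvement that you do not carry out, and the fallback of conjugating by a matrix $J$ with $J^2=-I$ is unavailable in odd dimensions. None of this machinery is needed: by Proposition~\ref{PropwA} applied to $A=-I$ (at $x$ and at $-x$) one gets $w_{-I}\simeq w$, so $w\in\mathcal{A}_{p,q}$ together with $w_{A_i}\lesssim w$ for $i=1,2$ allows you to quote \eqref{fuerteTalphapq} (the result of \cite{RU14}) directly — exactly as you did in the case $A_2=A_1^{-1}$, and exactly as the paper does; alternatively, the $M_{\alpha,-I}$ term in your Coifman--Fefferman estimate is handled at once by \eqref{fuertepqMalphaA} since $w_{-I}\simeq w$ and $w\in\mathcal{A}_{p,q}$. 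So the case $A_2=A_1^{-1}$ is fine, while both the general necessity and the $(-I,I)$ sufficiency, as you wrote them, have genuine gaps.
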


\begin{remark}
This result contains   Theorem 3.2 and Corollary 3.3  in \cite{FF15}, where the authors consider $p=q$, $\alpha =0$,  $w^p(x)=|x|^\beta\in A_p$, $A_1=-I$ and $A_2=I$.
\end{remark}

%{\red
%\begin{remark}
%Observe that $k_i(x)=|x|^{-\alpha_i}$ then $k_i\in S_{n-\alpha_i,\infty}\cap  H_{n-\alpha_i,\infty}$. Then the operator defined in \ref{Talpha2} is a example of our operators.
%\end{remark}
%}

\begin{remark}
As is well known the  $\mathcal{A}_p$ condition, for $1< p\leq \infty$,  is also necessary for  Calder\'{o}n-Zygmund  operators,  in the following way: if  $w^p$ is a weight such that all the Riezs transforms are strong $(p,p)$, then $w^p\in \mathcal {A}_p$. In a similar way we see in this paper that the  $\mathcal{A}_{A,p,q}$ classes of weights are also necessary to obtain strong $(p, q)$ bounds for certain integral operators.
\end{remark}

The paper continuous in the following way: in Section 2 we give
preliminaries,  definitions and  we state some of our main Theorems,
we prove the main results in Section 3 and the sparse domination in Section 4.
%in Section 3  we prove the main results and in Section 4  the sparse domination.
Finally, in   Section 5 we give extra commentaries about  the $\mathcal{ A}_{A, p,q}$ classes of weights.

%the $M_{\alpha, A}$ and

\section{Preliminaries and results}

First we will start giving the  definitions of the fractional size and H\"{o}rmander
conditions for the kernels that we will be working along this paper. Let us introduce the following  notation. Let $1\leq r<\infty$, we set
$$\|f\|_{r,B}=%\sup_{B\ni x}
\left( \frac1{|B|}\int_B |f(x)|^r\;dx\right)^{1/r},$$
where $B$ is a
ball. Observe that in this averages the balls $B$ can be replaced
by cubes $Q$. The notation  $|x|\sim t$ means $t < |x|\leq 2t$ and
we write
$$\|f\|_{ r,|x|\sim t}=\|f \chi_{|x|\sim t}\|_{r,B(0,2t)}.$$

Let $0\leq \alpha < n$ and $1\leq r\leq \infty$.
The function $K_{\alpha}$ is said to satisfy the fractional size condition, % $S_{\alpha,\Psi}$, denote it by
$K_{\alpha}\in S_{\alpha,r}$, if there exists a constant $C>0$ such that
$$\|K_{\alpha}\|_{r,|x| \sim t} \leq C t^{\alpha - n}.$$

For $s=1$ we write $S_{\alpha,r}=S_{\alpha}$. Observe that if
$K_{\alpha}\in S_{\alpha}$, then there exists a constant $c>0$ such
that
$$\int_{|x|\sim t} |K_{\alpha}(x)|dx\leq c t^{\alpha}.$$

For $\alpha =0$ we write $S_{0, r}=S_{r}$.

% Let $\Psi$ a Young function and let $0\leq \alpha < n$.
The function  $K_{\alpha}$ satisfies the $L^{\alpha,r}$-H\"ormander
condition ($K_\alpha \in H_{\alpha,r}$), if there exist $c_{r}>1$
and $C_{r}>0$ such that for all $x$ and $R>c_{r}|x|$,
\begin{align*} \sum_{m=1}^{\infty} (2^mR)^{n- \alpha}  \| K_{\alpha}(\cdot - x) - K_{\alpha}(\cdot)\|_{r,|y|\sim2^mR} \leq C_{r}.\end{align*}
We say that $K_{\alpha} \in H_{\alpha,\infty}$ if $K_{\alpha}$ satisfies the previous condition with $\|\cdot\|_{L^{\infty},|x|\sim 2^mR}$ in place
of $\|\cdot\|_{r,|x|\sim 2^mR}$.
For $\alpha =0$ we write $H_{0, r}=H_{r}$, the classical
$L^r$-H\"ormander condition.

\begin{remark} Observe that if  $K_{\alpha}(x)=|x|^{n-\alpha}$ then
    $T_{\alpha}=I_{\alpha}$ the fractional integral and $K_\alpha\in S_{\alpha, \infty}\cap H_{\alpha, \infty}$.
    \end{remark}

\begin{remark}
Let $0\leq \alpha < n$, $m \in \mathbb{N}$ and $1\leq i \leq m$. Let $1<r_i<\infty$, $s\geq 1$ defined by  $\frac1{r_1}+\cdots+\frac1{r_m}+\frac1{s}=1$  and     $0\leq\alpha_i<n$ such that $\alpha_1+\cdots + \alpha_m=n-\alpha$.
If $k_i \in S_{n-\alpha_i, r_i}$ for $1\leq i\leq m$ then $K\in S_{\alpha}$ for $K(x,y)=k_1(x-A_1y)k_2(x-A_2y)\dots k_m(x-A_my)$. See details in \cite{IFRi18}.
  \end{remark}

%\begin{lema}\label{tamano}
 %   Let $0\leq \alpha < n$, $m \in \mathbb{N}$ and $1\leq i \leq m$. Let $1<r_i<\infty$, $s\geq 1$ defined by  $\frac1{r_1}+\cdots+\frac1{r_m}+\frac1{s}=1$  and
  %  $0\leq\alpha_i<n$ such that $\alpha_1+\cdots + \alpha_m=n-\alpha$.
  %  Suppose that the matrices $A_i$ satisfy  hypothesis $(H)$ and $k_i \in S_{n-\alpha_i, r_i}$ for $1\leq i\leq m$.
    %$k_i \in S_{n-\alpha_i, \Psi_i}$ for $1\leq i\leq m$.\\
    %Let $\varphi_k(t)=t\log(e+t)^k$ and let $\phi$ be  a Young function such that
    %$\Psi_1^{-1}(t)\cdots\Psi_m^{-1}(t)\overline{\varphi_k}^{-1}(t)\phi^{-1}(t)\lesssim
    %t$ for $t\geq t_0$, some $t_0>0$.

   % Let $K(x,y)=k_1(x-A_1y)k_2(x-A_2y)\dots k_m(x-A_my)$. Let $B=B(c_B,R)$ be a
  %  ball centred at $c_B$ with radius R.
  %  We write $\tilde{B}=B(c_B,2R)$ and for $1\leq i \leq m$, set $\tilde{B}_i=A_i^{-1}\tilde{B}$. If $z\in \tilde{B}_j$, for some $1\leq j \leq m$, then there exists a positive constant $C$ such that
   % $$ \int_{B} |K(y,z)|dy \leq CR^{\alpha}.$$
%\end{lema}

 To obtain a two weight  bound of the  integral operator $T_{\alpha,m}$ in \eqref{eq: defT}, we define the following testing constant for a  pair of weight $(u,v)$. Let $\D$ be a dyadic family, $1\leq r < \frac{n}{\alpha}$ and $1< p\leq q < \frac{n}{\alpha}$,
 $$\mathcal{T}_{A,r,out,\D}=\underset{R\in\D}{\sup} \;\; v(R)^{-1/p}\left\|\sum_{Q\in \D:R\subset Q}|Q|^{\alpha/n-1/r}\left(\int_Q v\chi_R\right)^{1/r}\chi_Q\right\|_{L^{q}(u_A)}<\infty,$$
 $${\mathcal{T}}^{*}_{A,r, out,\D}=\underset{R\in\D}{\sup} \;\; u_A(R)^{-1/q'}\left\|\sum_{Q\in \D:R\subset Q}|Q|^{\alpha/n-1/r}v(Q)^{\frac1{r}-1}\left(\int_Q u_A\chi_R\right)\chi_Q\right\|_{L^{p'}\left(v\right)}<\infty,$$
  $${\mathcal{T}}_{A,r,in,\D}=\underset{R\in\D}{\sup} \;\; v(R)^{-1/p}\left\|\sum_{Q\in \D:Q\subset R}|Q|^{\alpha/n-1/r}v(Q)^{1/r}\chi_Q\right\|_{L^{q}(u_A)}<\infty,$$
 $${\mathcal{T}}^{*}_{A,r,in,\D}=\underset{R\in\D}{\sup} \;\; u_A(R)^{-1/q'}\left\|\sum_{Q\in \D:Q\subset R}|Q|^{\alpha/n-1/r}v(Q)^{\frac1{r}-1}u_A(Q)\chi_Q\right\|_{L^{p'}\left(v\right)}<\infty.$$
 %where $\mathscr{S}$ is a $\eta$-sparse family of dyadic cubes, this family will be defined in the next section.

  We will also use the following notation:
   $${\mathcal{T}}_{r,out,\D}:={\mathcal{T}}_{I,r, out,\D},\,\, {\mathcal{T}}^{*}_{r,out,\D}:={\mathcal{T}}^{*}_{I,r, out,\D},\,\, {\mathcal{T}}_{r,in,\D}:={\mathcal{T}}_{I,r, in,\D} \text{ and  } {\mathcal{T}}^{*}_{r,in,\D}:={\mathcal{T}}^{*}_{I,r, in,\D}, $$
  where $I$ is the identity matrix.
  Also for $r=1$ $${\mathcal{T}}_{A,out,\D}:={\mathcal{T}}_{A,1, out,\D},\,\, {\mathcal{T}}^{*}_{A,out,\D}:={\mathcal{T}}^{*}_{A,1, out,\D},\,\, {\mathcal{T}}_{A,in,\D}:={\mathcal{T}}_{A,1, in,\D} \text{ and  } {\mathcal{T}}^{*}_{A,in,\D}:={\mathcal{T}}^{*}_{A,1, in,\D}.$$

  We consider   $3^n$ dyadic families $\{ \D_j\}$, defined in \cite{LN18},  with the following property: every bounded set is contained in some dyadic cube $Q\in \D_j$. In Theorem \ref{Sparse} we will give more details of these families.

 \begin{teo}\label{FuerteT}

    Let $0\leq \alpha < n$, $m \in \mathbb{N}$ and let  $T_{\alpha,m}$
    be the  integral operator defined by (\ref{eq: defT}). For $1\leq i
    \leq m$, let  $1<r_i\leq\infty$ and $0\leq\alpha_i<n$
    such that $\alpha_1+\cdots + \alpha_m=n-\alpha$. Let $k_i \in
    S_{n-\alpha_i, r_i}\cap H_{n-\alpha_i,r_i}$ and let the
    matrices $A_i$ satisfy  the hypothesis $(H)$.

    \noindent If $\alpha=0$,  suppose $T_{0,m}$ be of strong type $(p_0,p_0)$ for some $1<p_0<\infty$.

      Suppose that there exists $s\geq 1$ such that $\frac1{r_1}+\cdots +\frac1{r_m}+\frac1{s}=1$   and  $(u,v)$ are weights  such that  for the $3^n$ dyadic families, $\{\D_j\}$, the testing constants  ${\mathcal{T}}_{A_i,s,out,\D_j},{\mathcal{T}}^{*}_{A_i,s, out,\D_j}<\infty$ for all $1\leq i \leq m$, $1\leq s < p <\frac{n}{\alpha}$. Then, there exists $c>0$ not depending on $f$, and  the pair $(u,v)$ such that
    \[ \|T_{\alpha,m}(f\sigma)\|_{L^q(u)} \leq c \|f\|_{L^p(\sigma)} \sum_{j=1}^{3^n}\sum_{i=1}^m ({\mathcal{T}}_{A_i,s,out,\D_j} +{\mathcal{T}}^{*}_{A_i,s, out,\D_j}),
    \]

 where $\sigma= v^{\frac{p'}{(p/s)'}}$.
 \end{teo}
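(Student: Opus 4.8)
The plan is to reduce the boundedness of $T_{\alpha,m}$ to a sparse domination and then apply a two-weight testing theorem for sparse operators. First I would invoke the sparse domination result (the one announced for Section 4, Theorem \ref{Sparse}): since each $k_i \in S_{n-\alpha_i,r_i}\cap H_{n-\alpha_i,r_i}$, the matrices $A_i$ satisfy (H), and (when $\alpha=0$) $T_{0,m}$ is strong type $(p_0,p_0)$, there exist sparse families $\mathcal S_j \subset \D_j$, $j=1,\dots,3^n$, such that for suitable $f$,
$$|T_{\alpha,m}f(x)| \lesssim \sum_{j=1}^{3^n}\sum_{i=1}^{m}\sum_{Q\in\mathcal S_j} |Q|^{\alpha/n}\,\|f\|_{s,A_i Q}\,\chi_Q(x),$$
i.e. $T_{\alpha,m}$ is pointwise controlled by a sum over $i$ and $j$ of fractional sparse operators $\mathcal A_{\mathcal S_j,\alpha,s,A_i}$ (the change of variable $y\mapsto A_i^{-1}y$ converting averages over $A_iQ$ into averages against $u_{A_i}$, matching the $A_i$-adapted testing constants). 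Passing from the pointwise bound to the $L^q(u)$ norm via the triangle inequality in $L^q$ leaves us with finitely many scalar quantities $\|\mathcal A_{\mathcal S_j,\alpha,s,A_i}(f\sigma)\|_{L^q(u_{A_i})}$ to control, each by $\|f\|_{L^p(\sigma)}$ times the relevant testing constant.

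Next I would establish the key one-family estimate: for a sparse family $\mathcal S\subset\D$, with $\sigma = v^{p'/(p/s)'}$, $1\le s<p<n/\alpha$, $1/q = 1/p-\alpha/n$,
$$\Big\|\sum_{Q\in\mathcal S}|Q|^{\alpha/n}\Big(\tfrac1{|Q|}\int_Q v\Big)^{1/s}\chi_Q\cdot(\text{localized to }f)\Big\|_{L^q(u)} \lesssim ({\mathcal T}_{r,out,\D}+{\mathcal T}^*_{r,out,\D})\,\|f\|_{L^p(\sigma)}.$$
The standard route here is duality together with the ``parallel corona'' / stopping-time decomposition used for two-weight bounds of (fractional) sparse and positive dyadic operators: split the sparse sum according to stopping cubes where the averages of $f\sigma$ (resp. of the dual function against $u$) roughly double, and bound the resulting pieces by the ``outer'' testing constants ${\mathcal T}_{s,out,\D}$ and ${\mathcal T}^*_{s,out,\D}$. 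This is essentially Sawyer-type two-weight machinery adapted to the exponent $r=s$ in the averages; I expect to cite or adapt an existing two-weight characterization for positive dyadic operators (Lacey–Sawyer–Uriarte-Tuero type) rather than reprove it from scratch. Since the $A_i$-adapted testing constants ${\mathcal T}_{A_i,s,out,\D_j}$, ${\mathcal T}^*_{A_i,s,out,\D_j}$ are exactly the images of the plain ones under the change of variables $x\mapsto A_ix$, the $A_i$-version of this estimate follows immediately from the $A=I$ version.

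Finally, summing over $1\le i\le m$ and $1\le j\le 3^n$ yields
$$\|T_{\alpha,m}(f\sigma)\|_{L^q(u)} \le c\,\|f\|_{L^p(\sigma)}\sum_{j=1}^{3^n}\sum_{i=1}^{m}\big({\mathcal T}_{A_i,s,out,\D_j}+{\mathcal T}^*_{A_i,s,out,\D_j}\big),$$
as claimed. The main obstacle I anticipate is the two-weight sparse estimate in the paragraph above: one must be careful that the ``in'' testing constants ${\mathcal T}_{A_i,s,in,\D_j}$, ${\mathcal T}^*_{A_i,s,in,\D_j}$ do \emph{not} appear in the final bound, which means the stopping-time argument has to be organized so that the local (inner) pieces are absorbed into the outer testing constants — this typically works because a fractional sparse operator, unlike a general positive dyadic operator, has its ``inner'' behavior dominated by the outer one when $p\le q$. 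Verifying this reduction carefully, and checking the bookkeeping of the exponent $\sigma=v^{p'/(p/s)'}$ through Hölder's inequality with the split $1/r_1+\cdots+1/r_m+1/s=1$, is where the real work lies; the rest is routine.
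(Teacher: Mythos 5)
Your plan is essentially the paper's own argument: sparse domination (Theorem \ref{Sparse}) reduces $T_{\alpha,m}$ to the operators $\mathcal{A}_{\alpha,s,\mathscr{S}_j}f(A_i^{-1}\cdot)$, a change of variables moves $u$ to $u_{A_i}$ so that each piece is handled by the one-family testing estimate (Theorem \ref{SparseTesting}), which the paper proves by the rescaling identity \eqref{CompSparse} and by citing the outer-testing lemma for positive sparse operators from \cite{FH18} (Lemma \ref{SparseTilde}) --- precisely the ``cite an existing two-weight characterization'' step you propose, after which one sums over $i$ and $j$. Your worry about the inner constants does not arise because the cited lemma is stated directly with the outer testing constants; the only cosmetic difference is that the paper's sparse bound keeps averages of $f$ over the cubes $Q$ and puts $A_i^{-1}$ inside $\chi_Q$, rather than averaging over $A_iQ$ as in your display.
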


It can be proved an analogous result with the conditions ${\mathcal{T}}_{A_i,s,in,\D_j},{\mathcal{T}}^{*}_{A_i,s, in,\D_j}$ in place of ${\mathcal{T}}_{A_i,s,out,\D_j},{\mathcal{T}}^{*}_{A_i,s, out,\D_j}$.

In  the case $r_i=\infty$, for all $1\leq i \leq m$, we obtain for the  integral operator $T_{\alpha, m}$, a new two weighted result:
 \begin{teo}\label{FuerteT1}
    Let $0\leq \alpha < n$, $m \in \mathbb{N}$ and let  $T_{\alpha,m}$
    be the integral operator defined by (\ref{eq: defT}). For $1\leq i
    \leq m$, let  $0\leq\alpha_i<n$
    such that $\alpha_1+\cdots + \alpha_m=n-\alpha$. Let $k_i \in
    S_{n-\alpha_i, \infty}\cap H_{n-\alpha_i,\infty}$ and let the
    matrices $A_i$ satisfy  the hypothesis $(H)$.

    \noindent If $\alpha=0$,  suppose $T_{0,m}$ be of strong type $(p_0,p_0)$ for some $1<p_0<\infty$.\\
Let $1<p\leq q < \frac{n}{\alpha}$.     If    $(u,v)$  are pairs of weights such that $(u_{A_i},v)\in \mathcal{M}_{\alpha, p,q}$ and $(v,u_{A_i})\in \mathcal{M}_{\alpha,q',p'}$    for $i=1,\dots,m$.
 %  \begin{equation} \label{Macot1}
% $$    M_{\alpha}(\cdot v ): L^{p}(v)\to L^q(u_{A_i}),$$
 %  \end{equation}
 %  and
 %  \begin{equation} %\label{Macot2}
% $$    M_{\alpha}(\cdot u_{A_i}):  L^{q'}(u_{A_i})\rightarrow L^{p'}(v),$$
 %  \end{equation}
    Then,
    $$T_{\alpha,m}:L^p(v)\to L^q(u).$$
 \end{teo}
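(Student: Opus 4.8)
The plan is to deduce Theorem~\ref{FuerteT1} from Theorem~\ref{FuerteT}. Since $r_i=\infty$ for every $i$, the exponent $s$ determined by $\frac1{r_1}+\cdots+\frac1{r_m}+\frac1s=1$ equals $s=1$; consequently $(p/s)'=p'$ and $\sigma=v^{\,p'/(p/s)'}=v$, while the remaining hypotheses of Theorem~\ref{FuerteT} (the conditions $k_i\in S_{n-\alpha_i,\infty}\cap H_{n-\alpha_i,\infty}$, hypothesis $(H)$, and, when $\alpha=0$, the strong $(p_0,p_0)$ bound for $T_{0,m}$, together with $1\le s<p<n/\alpha$) are exactly those assumed in Theorem~\ref{FuerteT1}. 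Hence it suffices to verify that for every $1\le i\le m$ and every one of the $3^n$ dyadic grids $\D_j$ the testing constants $\mathcal{T}_{A_i,1,out,\D_j}$ and $\mathcal{T}^{*}_{A_i,1,out,\D_j}$ are finite, with a bound independent of $j$; then Theorem~\ref{FuerteT} gives $\|T_{\alpha,m}(fv)\|_{L^q(u)}\le c\,\|f\|_{L^p(v)}$, the asserted bound.

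The point is that at $s=1$ these testing constants collapse to Sawyer's testing constants for $M_\alpha$. Fix $R\in\D_j$. If $Q\in\D_j$ and $R\subseteq Q$ then $\int_Q v\chi_R=v(R)$, and the cubes $Q\in\D_j$ with $R\subseteq Q$ that contain a fixed $x$ form a nested increasing chain $Q_0\subsetneq Q_1\subsetneq\cdots$ along which $|Q|^{\alpha/n-1}$ decays geometrically with ratio $2^{\alpha-n}<1$; summing the geometric series and bounding the leading term $|Q_0|^{\alpha/n-1}\int_{Q_0}v\chi_R$ by $M_\alpha(v\chi_R)(x)$ (legitimate because $x\in Q_0$) gives
$$\sum_{Q\in\D_j:\,R\subseteq Q}|Q|^{\alpha/n-1}\Big(\int_Q v\chi_R\Big)\chi_Q(x)\le\frac{1}{1-2^{\alpha-n}}\,M_\alpha(v\chi_R)(x),$$
and the identical estimate holds with $u_{A_i}$ in place of $v$ (in $\mathcal{T}^{*}$ the factor $v(Q)^{1/r-1}$ equals $1$ when $r=1$, so $v$ drops out). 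Therefore $\mathcal{T}_{A_i,1,out,\D_j}\lesssim \sup_{R}v(R)^{-1/p}\|M_\alpha(\chi_R v)\|_{L^q(u_{A_i})}$ and $\mathcal{T}^{*}_{A_i,1,out,\D_j}\lesssim \sup_{R}u_{A_i}(R)^{-1/q'}\|M_\alpha(\chi_R u_{A_i})\|_{L^{p'}(v)}$, with constants depending only on $n$ and $\alpha$.

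It remains to invoke Sawyer's theorem for $M_\alpha$ (the one quoted above). Applied to the pair $(u_{A_i},v)\in\mathcal{M}_{\alpha,p,q}$ with the test function $\chi_R$, it bounds $\|M_\alpha(\chi_R v)\|_{L^q(u_{A_i})}$ by $C\,[u_{A_i},v]_{\mathcal{M}_{\alpha,p,q}}\,v(R)^{1/p}$, so the first supremum is $\lesssim[u_{A_i},v]_{\mathcal{M}_{\alpha,p,q}}<\infty$. Applied to the pair $(v,u_{A_i})\in\mathcal{M}_{\alpha,q',p'}$ — which is admissible since $1<p\le q<\infty$ forces $1<q'\le p'<\infty$ — with test function $\chi_R$, it bounds $\|M_\alpha(\chi_R u_{A_i})\|_{L^{p'}(v)}$ by $C\,[v,u_{A_i}]_{\mathcal{M}_{\alpha,q',p'}}\,u_{A_i}(R)^{1/q'}$, so the second supremum is $\lesssim[v,u_{A_i}]_{\mathcal{M}_{\alpha,q',p'}}<\infty$. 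These bounds are uniform in $R$ and in the grid $\D_j$, so all testing constants are finite and Theorem~\ref{FuerteT} delivers the conclusion. The one genuine difficulty is packaged inside Theorem~\ref{FuerteT} itself (the sparse domination of $T_{\alpha,m}$ and the two-weight analysis of the resulting positive dyadic operators), which we treat as a black box; within the present argument the only substantive step is this collapse — via an elementary geometric series over nested cubes — of the intricate ``out'' testing constants to the two Sawyer conditions in the hypothesis, the remaining care being the bookkeeping $\sigma=v$ at $s=1$ and the use of the dual exponents $q'\le p'$ in the second testing constant.
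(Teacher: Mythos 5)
Your proposal is correct and follows essentially the same route as the paper: the paper proves Theorem \ref{FuerteT1} by combining Theorem \ref{FuerteT} (with $s=1$, $\sigma=v$) with a lemma that collapses the ``out'' testing constants to the Sawyer conditions via exactly your geometric-series argument over the nested cubes containing $R$. The only difference is that the paper's lemma restricts attention to $x\in R$ and identifies the bound directly with $[u_{A},v]_{\mathcal{M}_{\alpha,p,q}}$, whereas you additionally invoke Sawyer's strong-type theorem with $f=\chi_R$ to control the part of the testing function supported outside $R$ --- a slightly more careful handling of the same step, since the sum $\sum_{Q\supseteq R}$ is not supported in $R$.
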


 Given $w$ a weight such that  the pair $(u,v)=(w^q_{A_i}, w^{-p'})$ we can write the following
 \begin{corol}\label{CorolFuerteT1}
    Let $0\leq \alpha < n$, $m \in \mathbb{N}$ and let  $T_{\alpha,m}$
    be the  integral operator defined by (\ref{eq: defT}). For $1\leq i
    \leq m$, let  $0\leq\alpha_i<n$
    such that $\alpha_1+\cdots + \alpha_m=n-\alpha$. Let $k_i \in
    S_{n-\alpha_i, \infty}\cap H_{n-\alpha_i,\infty}$ and let the
    matrices $A_i$ satisfy  the hypothesis $(H)$.

    \noindent If $\alpha=0$,  suppose $T_{0,m}$ be of strong type $(p_0,p_0)$ for some $1<p_0<\infty$.\\
    Let $1<p< \frac{n}{\alpha}$ and $\frac1{q}=\frac1{p}-\frac{\alpha}{n}$. If    $w$  is a weight such that $w \in \mathcal{M}_{\alpha,A_i, p,q} \cap \mathcal{M}_{\alpha,A_i, q',p'}$, for $i=1,\dots,m$.
    %\begin{equation} \label{Macot1}
    %M_{\alpha,A^{-1}_{i}}(\cdot ): L^{p}(w^{p})\to L^q(w^q),
    %\end{equation}
    %and
    %\begin{equation} \label{Macot2}
    %M_{\alpha}(\cdot w^q_{A_i}):  L^{q'}(w^q_{A_i})\rightarrow L^{p'}(w^{-p'}),
    %\end{equation}
    Then,
    $$T_{\alpha,m}:L^p(w^{p})\to L^q(w^q).$$

 \end{corol}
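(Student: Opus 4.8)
The plan is to derive Corollary \ref{CorolFuerteT1} directly from Theorem \ref{FuerteT1} by specializing the pair of weights to $(u,v)=(w^q_{A_i},w^{-p'})$ and checking that the hypotheses of Theorem \ref{FuerteT1} translate into the stated hypotheses on $w$. Note first that, for a general invertible matrix $A$, one has $u_A = (w^q_{A_i})_A = w^q_{A A_i}$; however, the matrices appearing in Theorem \ref{FuerteT1} are precisely the $A_i$ from the kernel, and the testing conditions there are $(u_{A_i},v)\in\mathcal{M}_{\alpha,p,q}$ and $(v,u_{A_i})\in\mathcal{M}_{\alpha,q',p'}$. So the one technical point is to make the bookkeeping of subscripts consistent with the \textbf{Definition} of the classes $\mathcal{M}_{\alpha,A,p,q}$: recall $w\in\mathcal{M}_{\alpha,A,p,q}$ means $(w^q_A,w^{-p'})\in\mathcal{M}_{\alpha,p,q}$, and $w\in\mathcal{M}_{\alpha,A,q',p'}$ means $(w^{-p'},w^q_A)\in\mathcal{M}_{\alpha,q',p'}$.

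First I would set $v=w^{-p'}$ and $u=w^q$ (not $w^q_{A_i}$), so that $u_{A_i}=w^q_{A_i}$. Then the first hypothesis of Theorem \ref{FuerteT1}, namely $(u_{A_i},v)\in\mathcal{M}_{\alpha,p,q}$, reads $(w^q_{A_i},w^{-p'})\in\mathcal{M}_{\alpha,p,q}$, which by definition is exactly $w\in\mathcal{M}_{\alpha,A_i,p,q}$. Likewise the second hypothesis $(v,u_{A_i})\in\mathcal{M}_{\alpha,q',p'}$ reads $(w^{-p'},w^q_{A_i})\in\mathcal{M}_{\alpha,q',p'}$, which is precisely $w\in\mathcal{M}_{\alpha,A_i,q',p'}$. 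Since both of these hold for every $i=1,\dots,m$ by the hypothesis of the corollary, Theorem \ref{FuerteT1} applies and gives $T_{\alpha,m}:L^p(v)\to L^q(u)$, i.e. $T_{\alpha,m}:L^p(w^{-p'})\to L^q(w^q)$.

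The remaining step is a change of the density: I would replace $f$ by $fw^{p'}$ (equivalently, apply the bound to $g=fv$ with $v=w^{-p'}$), using that $L^p(w^{-p'})$ norms of $fv$ equal $L^p(w^p)$ norms of... — more carefully, one writes the conclusion in the $\sigma$-form consistent with the rest of the paper: for $g\in L^p(w^p)$ put $f$ so that $g = f\,w^{-p'}$, and then $\|T_{\alpha,m}(f w^{-p'})\|_{L^q(w^q)}\lesssim \|f\|_{L^p(w^{-p'})}$ becomes $\|T_{\alpha,m} g\|_{L^q(w^q)}\lesssim \|g\|_{L^p(w^p)}$ after checking $\int |f|^p w^{-p'} = \int |g|^p w^{p'(p-1)-p'} \cdot(\dots)$; the exponent arithmetic $-p' + p\cdot p' - p\cdot(\text{shift})$ collapses to the $A_{p,q}$-type normalization exactly as in the passage from Theorem (Sawyer) to Corollary \ref{CaracPesosMA}. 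I would simply cite that same normalization and state the conclusion $T_{\alpha,m}:L^p(w^p)\to L^q(w^q)$.

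The main obstacle I anticipate is not any deep estimate — everything substantive is already in Theorem \ref{FuerteT1} — but rather getting the matrix subscripts and the dual pair $(u,v)$ versus $(v,u)$ placed correctly, since the classes $\mathcal{M}_{\alpha,A,p,q}$ and $\mathcal{M}_{\alpha,A,q',p'}$ are defined with the two weights in opposite orders. One must be careful that $u=w^q$ and $u_{A_i}=w^q_{A_i}$, not $u=w^q_{A_i}$; otherwise an extra spurious matrix $A_i^2$ would appear. Once that is pinned down, the proof is a two-line specialization plus the routine substitution $g=fw^{-p'}$, and I would present it exactly that tersely.
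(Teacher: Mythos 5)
Your proof is correct and follows essentially the same route as the paper: the corollary is just the specialization of Theorem \ref{FuerteT1} to the pair $(u,v)=(w^{q},w^{-p'})$, so that $u_{A_i}=w^{q}_{A_i}$ and the hypotheses become, by definition, $w\in\mathcal{M}_{\alpha,A_i,p,q}\cap\mathcal{M}_{\alpha,A_i,q',p'}$, followed by the routine change of density $g=f\,w^{-p'}$ with $p'(p-1)=p$. Your bookkeeping remark even tidies up the paper's slightly loose phrasing of the pair as $(w^{q}_{A_i},w^{-p'})$, so nothing further is needed.
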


  \begin{remark}
    By Remark \ref{RemtestimplicaAApq},  if  $\displaystyle w\in\bigcap_{i=1}^m\mathcal{M}_{\alpha,A_i, p,q}$ then $\displaystyle w \in \bigcap_{i=1}^m\mathcal{A}_{A_i,p,q}$  and this implies  $w(A_ix)\leq c w(x)$ a.e. $x\in \mathbb{R}^n$, for all $1\leq i\leq m$. (See  Proposition \ref{PropwA}). In \cite{RU14} it was proved this same result under the hypothesis $w\in \mathcal{A}_{p,q}$ and $w(A_ix)\leq c w(x)$ a.e. $x\in \mathbb{R}^n$ for all $1\leq i\leq m$. As we say in Corollary \ref{CaracPesosMA}, $w\in \mathcal{A}_{p,q}$ and $w(Ax)\leq c w(x)$ then $w\in \mathcal{M}_{\alpha,A, p,q}$  and this implies $w\in \mathcal{A}_{A,p,q}$. Therefore we obtain a different proof without using essentially $w\in \mathcal{A}_\infty$. In other words, we have
$$A_{p,q}\cap \{w: w_A \lesssim w\}\subset \{w : {\text{\it testing condition for }} M_{\alpha,A^{-1}} \} \subset \mathcal{A}_{A,p,q}.$$
 \end{remark}

 \section{Proof of the main results}

In the following proof we give a characterization of the good weights stated in Theorem  \ref{Ejem}.
 \begin{proof}[Proof of Theorem \ref{Ejem}]
     Let $B=B(c_B,R)$ and $B_i=A_i^{-1}B$. %$B_i=B(A_i^{-1}c_B,R)$. %.
  Suppose that $f=\chi_{B_1}$ %or $f=\chi_{B_2}$ 
and
    \[ T_{\alpha,2}(\cdot v) : L^p(v) \to L^q(u).
    \]
    Then
    \[ v(\chi_{B_1})^{-1/p}\left(\int T_{\alpha,2}(\chi_{B_1} v)(x)^qu(x)dx\right)^{1/q}<\infty.
    \]
If $x\in B$ and $y\in B_1$, then
    \[ |x-A_1y|\leq |x-c_B|+|c_B-A_1y|\leq R+C_A R=(1+C_A)R.
    \]

    If $|x-A_2y|\leq |x-A_1y|$, then
    \[ |x-A_1y|,|x-A_2y|\lesssim R,
    \]
    and  since $\alpha_1+\alpha_2=n-\alpha$ then
    \[ \frac{v(y)}{|x-A_1y|^{\alpha_1}|x-A_2y|^{\alpha_2}}\geq \frac{v(y)}{|x-A_1y|^{n-\alpha}}\geq C
    \frac{v(y)}{R^{n-\alpha}}.
    \]

    If $|x-A_1y|\leq |x-A_2y|$, then
    \[ \frac{v(y)}{|x-A_1y|^{\alpha_1}|x-A_2y|^{\alpha_2}}\geq
    \frac{v(y)}{|x-A_2y|^{n-\alpha}}.
    \]

    If $2^j|x-A_1y|\leq |x-A_2y|\leq 2^{j+1}|x-A_1y|$, then
    \[
    \frac1{|x-A_2y|^{n-\alpha}}\geq 2^{(\alpha-n)(j+1)}\frac1{|x-A_1y|^{n-\alpha}}\geq 2^{(\alpha-n)(j+1)}\frac1{R^{n-\alpha}}.
    \]

   Using that \[ \sum_{j=1}^{\infty}
    \left(2^{\alpha-n}\right)^{j}=\frac1{1-2^{\alpha-n}}-1=\frac{2^{\alpha-n}}{1-2^{\alpha-n}},
    \]
    in the case that  $|x-A_1y|\leq |x-A_2y|$, we have
    \[ \frac{v(y)}{|x-A_1y|^{\alpha_1}|x-A_2y|^{\alpha_2}}\geq  2^{\alpha-n}\frac{2^{\alpha-n}}{1-2^{\alpha-n}}
    \frac{v(y)}{R^{n-\alpha}}.
    \]
    Hence, if $x\in B$ and $y\in B_1$,
 \[ \frac{v(y)}{|x-A_1y|^{\alpha_1}|x-A_2y|^{\alpha_2}}\geq  C_{n,\alpha,A} \frac{v(y)}{R^{n-\alpha}}.
    \]
    We have an analogous result if $y\in B_2$.

    If $x\in B$
    \begin{align*}
    T_{\alpha,2}(\chi_{B_1} v)(x)\geq R^{\alpha-n}v(B_1)=|B|^{\alpha/n-1}v(B_1).
    \end{align*}
    Then we have
    \begin{align*}
    v(B_1)^{-1/p}&\left(\int T_{\alpha,2}(\chi_{B_1} v)(x)^qu(x)dx\right)^{1/q}
    \geq v(B_1)^{-1/p}\left(\int_B T_{\alpha,2}(\chi_{B_1} v)(x)^qu(x)dx\right)^{1/q}
    \\&\geq v(B_1)^{-1/p}\left(\int_B |B|^{q(\alpha/n-1)}v(B_1)^qu(x)dx\right)^{1/q}
    \\&\geq v(B_1)^{-1/p} |B|^{\alpha/n-1}v(B_1)u(B)^{1/q}
    \\&=|det A_1^{-1}|^{-1/p'}|B|^{1/q} u(B)^{1/q}|B_1|^{1/p'}v(B_1)^{1/p'}
    \\&=|det A_1|^{1/p'}|B|^{\alpha/n-1} u(B)^{1/q}v_{A_1^{-1}}(B)^{1/p'}.
    \end{align*}

If we take $f=\chi_{B_2}$, in an analogous way we have
$$|B|^{\alpha/n-1} u(B)^{1/q}v_{A_2^{-1}}(B)^{1/p'}<\infty.$$

    If $u=w^q$ and $v=w^{-p'}$ we conclude that if $T_{\alpha,2} : L^p(w^p) \to L^q(w^q)$ then
    $w\in \cap_{i=1}^2\mathcal{A}_{p,q,{A_i^{-1}}}=\cap_{i=1}^2\mathcal{A}_{A_i,p,q}$.

Furthermore, consider the  case  $A_1=A$ and $A_2=A^{-1}$.
    If $w\in \mathcal{A}_{A,p,q} \cap \mathcal{A}_{A^{-1},p,q}$ then  $w_A\sim w$ and  $w\in \mathcal{A}_{p,q}$. Therefore $T_{\alpha,2}:L^p(w^p)\to L^q(w^q)$.
Now for  the case  $A_1=-I$ and $A_2=I$, if $w\in \mathcal{A}_{-I,p,q} \cap \mathcal{A}_{p,q}$ then $w_{-I}\simeq w$ and $T_{\alpha, 2}:L^p(w^p)\to L^q(w^q)$ (see \cite{RU14}).
 \end{proof}

 To prove the boundedness of $T_{\alpha,m}$ for general kernels  we will use an appropriate sparse domination.

 Given a cube $Q\in \R$, we denote by $\mathcal{D}(Q)$ the family of all dyadic cubes  respect to $Q$, that is, the cube obtained subdividing  repeatedly $Q$ and each of its descendant into $2^n$ subcubes of the same side lengths.

 Given a dyadic family $\mathcal{D}$ we say that a subfamily $\mathscr{S}\subset \mathcal{D}$ is a $\eta$-sparse family with $0<\eta<1$, if  for every $Q\in \mathscr{S}$, there exists a measurable set $E_Q\subset Q$ such that $\eta|Q|\leq |E_Q|$ and the family $\{E_Q\}_{Q\in \mathscr{S}}$   are pairwise disjoint.\\
  The following theorem will be prove in Section \ref{secsparse}.
%   {\red{la demostraci\'on quedo fuera del documento, volver a citar lo de los compactos}}
 \begin{teo}\label{Sparse}
    Let $0\leq \alpha < n$, $m \in \mathbb{N}$ and let  $T_{\alpha,m}$
    be the  integral  operator defined by (\ref{eq: defT}). For $1\leq i
    \leq m$, let $1<r_i\leq\infty$,  defined by $s\geq 1$, $\frac1{r_1}+\cdots+\frac1{r_m}+\frac1{s}=1$ and  $0\leq\alpha_i<n$ such that $\alpha_1+\cdots + \alpha_m=n-\alpha$. Let $k_i \in
    S_{n-\alpha_i, r_i}\cap H_{n-\alpha_i, r_i}$ and let the
    matrices $A_i$ satisfy  the hypothesis $(H)$.

    \noindent If $\alpha=0$,  suppose $T_{0,m}$ be of strong type $(p_0,p_0)$ for some $1<p_0<\infty$.\\
    There exist $c>0$ %=C(n,\alpha,A_1,...,A_m)
    and    $3^n$    $\frac1{2.9^n}$-sparse families, $\{{\mathscr{S}_j}\}_{j=1}^{3^n}$,
    such that   for $f\in
    L_c^{\infty}(\R)$ and a.e. $x\in \R$, we have
  %  \begin{equation}
   % |T_{\alpha,m}f(x)|\leq c \sum_{j=1}^{3^n}\sum_{i=1}^m \sum_{Q\in \mathscr{S}_j}
  %  |Q|^{\alpha/n}\|f\|_{s,Q}\chi_Q(A_i^{-1}x).
  %  \end{equation}
 %If we define the sparse operator $\mathcal{A}_{\alpha,s,\mathscr{S}}$ by
 % $$\mathcal{A}_{\alpha,s,\mathscr{S}}f(A_i^{-1}x)=\sum_{Q\in \mathscr{S}}
 %|Q|^{\alpha/n}\|f\|_{s,Q}\chi_Q(A_i^{-1}x),$$
%then
   % \begin{equation}
   % |T_{\alpha,m}(f)(x)| \leq c \sum_{j=1}^{3^n}\sum_{i=1}^m \mathcal{A}_{\alpha, s,\mathscr{S}_j}f(A_i^{-1}x).
  %  \end{equation}
     $$
    |T_{\alpha,m}(f)(x)| \leq c \sum_{j=1}^{3^n}\sum_{i=1}^m \mathcal{A}_{\alpha, s,\mathscr{S}_j}f(A_i^{-1}x),
    $$
where  $\mathcal{A}_{\alpha,s,\mathscr{S}}$ is the sparse operator defined by 
$$\mathcal{A}_{\alpha,s,\mathscr{S}}f(A_i^{-1}x):=\sum_{Q\in \mathscr{S}}
|Q|^{\alpha/n}\|f\|_{s,Q}\chi_Q(A_i^{-1}x).$$
     \end{teo}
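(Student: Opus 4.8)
The plan is to prove the pointwise inequality by the now-standard recursive stopping-time scheme that produces sparse dominations for singular and fractional integrals (as in \cite{L16}, \cite{AMPRR17}), combined with the kernel estimates of \cite{IFRi18} to absorb the matrices $A_i$, and with the size and H\"ormander conditions to run the iteration. Since $f\in L_c^\infty(\R)$, its support lies in a cube $Q_0$ of one of the $3^n$ adjacent dyadic lattices of \cite{LN18}; it then suffices to build a $\tfrac12$-sparse family $\mathscr S\subset\D(Q_0)$ such that, for a.e.\ $x\in Q_0$,
\[
|T_{\alpha,m}(f\chi_{Q_0})(x)|\ \le\ c\sum_{i=1}^m\sum_{Q\in\mathscr S}|Q|^{\alpha/n}\|f\|_{s,3Q}\,\chi_{3Q}(A_i^{-1}x),
\]
because the role of the $3^n$ lattices is precisely to let one replace the dilates $3Q$ here by comparable dyadic cubes, which reorganises the union of $3^n$ $\tfrac12$-sparse families into the $\tfrac1{2\cdot9^n}$-sparse ones and delivers the form $|Q|^{\alpha/n}\|f\|_{s,Q}\chi_Q(A_i^{-1}x)$ of the statement.

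For the local statement I would fix $Q_0$ and introduce the grand maximal truncated operator $\mathcal M_T g(x)=\sup_{Q\ni x}\|T_{\alpha,m}(g\chi_{\R\setminus 3Q})\|_{L^\infty(Q)}$. Writing $\vartheta(Q):=\sum_{i=1}^m|Q|^{\alpha/n}\|f\|_{s,3Q}$, select the maximal dyadic cubes $Q_k\subset Q_0$ on which either $\vartheta(Q_k)>\Lambda\,\vartheta(Q_0)$ or $\mathcal M_T(f\chi_{3Q_0})>\Lambda\,\vartheta(Q_0)$, with $\Lambda$ a large constant. For $\Lambda$ large one gets $\sum_k|Q_k|\le\tfrac12|Q_0|$, so iterating the construction inside each $Q_k$ produces a $\tfrac12$-sparse family; the measure bound rests on weak-type estimates for $T_{\alpha,m}$ and $\mathcal M_T$ at the relevant scale, which for $\alpha>0$ come from the size conditions $k_i\in S_{n-\alpha_i,r_i}$ and H\"older's inequality with exponents $r_1,\dots,r_m,s$ (giving $T_{\alpha,m}\colon L^s\to L^{s^{*},\infty}$, $\tfrac1{s^{*}}=\tfrac1s-\tfrac\alpha n$, exactly the scale of $\vartheta$), and for $\alpha=0$ come from the assumed strong $(p_0,p_0)$ boundedness of $T_{0,m}$ combined with the H\"ormander conditions $k_i\in H_{n-\alpha_i,r_i}$ via the classical $L^r$-H\"ormander Calder\'on--Zygmund theory.

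On the good set $E=Q_0\setminus\bigcup_kQ_k$ the matter reduces to $|T_{\alpha,m}(f\chi_{3Q_0})(x)|\lesssim\vartheta(Q_0)$ for $x\in E$, together with a companion bound for $|T_{\alpha,m}(f\chi_{3Q_0\setminus 3Q_k})(x)|$, $x\in Q_k$, needed to iterate; this is the only place the full kernel is used. Here one refines the estimates of \cite{IFRi18}: for fixed $x$ one splits $\R$ according to which of $|x-A_1y|,\dots,|x-A_my|$ is smallest, so that on the $i$-th region only $k_i(x-A_iy)$ can be singular, and singular only near $y=A_i^{-1}x$ (which is why the $i$-th summand carries the shift $A_i^{-1}x$); the size condition $k_i\in S_{n-\alpha_i,r_i}$ controls that factor, the invertibility of $A_i-A_j$ in $(H)$ keeps the factors $k_j(x-A_jy)$, $j\neq i$, away from their singularities so that their size conditions apply, and H\"older in $r_1,\dots,r_m,s$ (legitimate since $\tfrac1{r_1}+\cdots+\tfrac1{r_m}+\tfrac1s=1$) together with the gain $\alpha=n-\sum\alpha_i$ assembles these into $\vartheta(Q_0)$. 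For the companion (tail) bound one compares $T_{\alpha,m}(f\chi_{(3Q_k)^{c}})$ at $x$ with its value at a point of $\R\setminus\bigcup_kQ_k$, telescoping $\prod_ik_i(\cdot-A_iy)$ one factor at a time, using the H\"ormander condition $k_i\in H_{n-\alpha_i,r_i}$ on the varied factor and the size conditions on the rest, summing the resulting dyadic-annulus series (all annuli $|x-A_jy|\sim 2^{\nu}\ell(Q_0)$, $\nu\ge1$, being comparable for $|y|$ large by $(H)$), applying the same H\"older, and invoking the $\mathcal M_T$-clause of the stopping rule. Iterating the recursion and assembling the sparse family then yields the claim.

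I expect the main obstacle to be this combined size/H\"ormander estimate in the presence of the $m$ matrices: it must be run on neighbourhoods adapted to each $A_i^{-1}x$ while arranging that the stopping cubes register only lattice-intrinsic quantities that a single family $\mathscr S$ can control (the $L^s$-averages of $f$ and the truncated operator), the shifts $A_i^{-1}x$ being relegated to the harmless characteristic functions $\chi_Q(A_i^{-1}x)$; and one must check, from $(H)$, the non-degeneracy of the cross factors $k_j(x-A_jy)$, $j\neq i$, on the region where $|x-A_iy|$ is minimal, with all constants depending only on $n$, $\alpha$ and the $A_i$. The endpoint $\alpha=0$, where the size conditions give no weak-type bound on their own, is exactly why the external hypothesis that $T_{0,m}$ is of strong type $(p_0,p_0)$ is imposed.
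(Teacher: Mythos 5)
Your overall strategy is the same as the paper's: a Lerner-type recursive stopping-time construction driven by a grand maximal truncated operator, the $3^n$-lattices trick of \cite{LN18}, H\"older with exponents $r_1,\dots,r_m,s$ combined with the size and H\"ormander conditions for the kernel estimates, and the strong $(p_0,p_0)$ hypothesis at $\alpha=0$. However, there is a genuine gap exactly at the point you flag as the main obstacle, and it is not a technicality. Your grand maximal operator $\mathcal M_Tg(x)=\sup_{Q\ni x}\|T_{\alpha,m}(g\chi_{\mathbb{R}^n\setminus 3Q})\|_{L^\infty(Q)}$ truncates only around $x$, but for an evaluation point $\xi\in Q$ the kernel $\prod_i k_i(\xi-A_iy)$ is singular at the $m$ points $y=A_i^{-1}\xi$, which in general lie far outside $3Q$. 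Hence $T_{\alpha,m}(g\chi_{\mathbb{R}^n\setminus 3Q})(\xi)$ still contains the singular local pieces near each $A_i^{-1}\xi$, and the essential supremum over $\xi\in Q$ sees the behaviour of $g$ at arbitrarily small scales near every point of $A_i^{-1}Q$; this cannot be dominated pointwise by $\sum_i M_{\alpha,s}g(A_i^{-1}x)+|T_{\alpha,m}g(x)|$ (a small-scale spike of $g$ near $A_i^{-1}\xi_0$ for some $\xi_0\in Q$, $\xi_0\neq x$, blows up the left side but not the right), so the analogues of part (ii) of Lemmas \ref{lemaalpha} and \ref{lemasparse2}, and with them the weak-type bound behind your measure estimate $\sum_k|Q_k|\le\tfrac12|Q_0|$, are not available for your operator. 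The paper's fix is structural: the truncated maximal operator is taken over $m$-tuples of cubes $Q_i\ni A_i^{-1}x$ with truncation $\chi_{\mathbb{R}^n\setminus 3(Q_1\cup\dots\cup Q_m)}$, and accordingly the recursion produces $m$ stopping families $\{P_j^i\}\subset\mathcal D(Q_0^i)$, one per matrix, glued in $x$-space through the sets $A_iP_j^i$.

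A second, related gap is the reduction step. It does not suffice to prove the local claim for a.e.\ $x\in Q_0$ with a single sparse family in $\mathcal D(Q_0)$: as stated, your local inequality can fail, since its right-hand side carries the factors $\chi_{3Q}(A_i^{-1}x)$ with $Q\subset Q_0$ and therefore vanishes at points $x\in Q_0$ for which all $A_i^{-1}x$ lie far from $Q_0$, while $T_{\alpha,m}(f\chi_{Q_0})(x)$ need not vanish there (take nonnegative kernels and $f\ge 0$). The natural domain of the local estimate is $\bigcup_{i=1}^m A_iQ_0$, and since the final domination must hold a.e.\ on all of $\mathbb{R}^n$, one needs, as in the paper, a countable family of cubes $Q_j$ with $\operatorname{supp}f\subset 3Q_j$ and $\bigcup_j Q_j=\mathbb{R}^n$ (so that $\bigcup_j\bigcup_i A_iQ_j=\mathbb{R}^n$), applying the claim with $Q_0^1=\dots=Q_0^m=Q_j$ and only afterwards invoking the $3^n$-lattices trick to pass from the dilates $3Q$ to dyadic cubes and obtain the $\tfrac1{2\cdot 9^n}$-sparse families. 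With these two corrections — the $A_i$-adapted truncation and the $A_iQ$ bookkeeping plus covering — your outline coincides with the paper's proof.
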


 %Now, we consider the case $\varphi(t)=t^r$ some $r\geq 1$. In this case we prove the following

 To  prove the boundedness of $T_{\alpha,m}$ it is enough to show the boundedness of $\mathcal{A}_{\alpha,s,\mathscr{S}_j}$.
 %\begin{teo}\label{SparseTesting}
 %  Let $0\leq \alpha < n$, $1<r<p{\red\leq q<}\infty$  %$\frac1{q}=\frac1{p}-\frac{\alpha}{n}$.
 %  Let $A$ be a invertible matrix and $\mathscr{S}$ be a sparse family. Let $(u, \sigma)$ a pair of weights and $v=\sigma^{r\frac{(p/r)'}{p'}}$. If $(u,\sigma)$ satisfies local testing conditions, ${\mathcal{T}}_{r,in}, {\mathcal{T}}^*_{r,in} <\infty$,
    %$$\tilde{\mathcal{T}}_{in}=\underset{R\in\mathscr{S}}{\sup} \;\; v(R)^{-1/p}\left\|\sum_{Q\in \mathscr{S}:Q\subset R}|Q|^{\alpha/n-1/r}v(Q)^{1/r}\chi_Q\right\|_{L^{q}(u)}<\infty,$$
    %$$\tilde{\mathcal{T}}^{*}_{in}=\underset{R\in\mathscr{S}}{\sup} \;\; u(R)^{-1/q'}\left\|\sum_{Q\in \mathscr{S}:Q\subset R}|Q|^{\alpha/n-1/r}v(Q)^{\frac1{r}-1}u(Q)\chi_Q\right\|_{L^{p'}\left(v\right)}<\infty.$$
 %  then, $$\left(\int_{\mathbb{R}^{n}}A_{\alpha,r,\mathscr{S}}(f\sigma)^qu\right)^{1/q}\leq C_{n,p,\alpha}({\mathcal{T}}_{r,in}+{\mathcal{T}}^*_{r,in})\left(\int_{\mathbb{R}^{n}}|f|^p\sigma\right)^{1/p}.$$
 %  If ${\mathcal{T}}^*_{r,in}<\infty$ then
 %  $$\mathcal{A}_{\alpha,r,\mathscr{S}}:L^p(\sigma)\rightarrow L^{q,\infty}(u).$$

 %  Furthermore, if $u=w_A^q$ and $\sigma=w^{-p'}$, then $v=w^{-r(p/r)'}$ and
 %  $$\mathcal{A}_{\alpha,r,\mathscr{S}}:L^p(w^p)\rightarrow L^{q}(w_A^q).$$
 %\end{teo}

  \begin{teo}\label{SparseTesting}
    Let $0\leq \alpha < n$, $1<r<p\leq q<\infty$  %$\frac1{q}=\frac1{p}-\frac{\alpha}{n}$.
    Let $A$ be a invertible matrix and $\mathscr{S}$ be a sparse family from a family of dyadic cubes $\D$. Let $(u, v)$ a pair of weights and $\sigma=v^{\frac{p'}{(p/r)'r}}$. If $(u,v)$ satisfies local testing conditions, ${\mathcal{T}}_{r,out,\D}, {\mathcal{T}}^*_{r,out,\D} <\infty$,
    %$$\tilde{\mathcal{T}}_{in}=\underset{R\in\mathscr{S}}{\sup} \;\; v(R)^{-1/p}\left\|\sum_{Q\in \mathscr{S}:Q\subset R}|Q|^{\alpha/n-1/r}v(Q)^{1/r}\chi_Q\right\|_{L^{q}(u)}<\infty,$$
    %$$\tilde{\mathcal{T}}^{*}_{in}=\underset{R\in\mathscr{S}}{\sup} \;\; u(R)^{-1/q'}\left\|\sum_{Q\in \mathscr{S}:Q\subset R}|Q|^{\alpha/n-1/r}v(Q)^{\frac1{r}-1}u(Q)\chi_Q\right\|_{L^{p'}\left(v\right)}<\infty.$$
    then, $$\left(\int_{\mathbb{R}^{n}}\mathcal{A}_{\alpha,r,\mathscr{S}}(f\sigma)^qu\right)^{1/q}\leq C_{n,p,\alpha}({\mathcal{T}}_{r,out,\D}+{\mathcal{T}}^*_{r,out,\D})\left(\int_{\mathbb{R}^{n}}|f|^p\sigma\right)^{1/p}.$$
    If ${\mathcal{T}}^*_{r,out,\D}<\infty$ then
    $$\mathcal{A}_{\alpha,r,\mathscr{S}}:L^p(\sigma)\rightarrow L^{q,\infty}(u).$$

    Furthermore, if $u=w_A^q$ and $v=w^{-r(p/r)'}$, then $\sigma=w^{-p'}$ and
    $$\mathcal{A}_{\alpha,r,\mathscr{S}}:L^p(w^p)\rightarrow L^{q}(w_A^q).$$
 \end{teo}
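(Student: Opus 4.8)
I would pass to the dual formulation and run a Sawyer-type parallel stopping-time (corona) argument, in which the two testing constants control exactly the local pieces. To begin with, observe that the exponent in $\sigma=v^{p'/((p/r)'r)}$ plays two complementary roles. On the one hand, Hölder's inequality with exponents $p/r$ and $(p/r)'$ on each cube $Q$ gives
\[ \|f\sigma\|_{r,Q}=|Q|^{-1/r}\Big(\int_Q |f|^r\sigma^r\Big)^{1/r}\le |Q|^{-1/r}\Big(\int_Q |f|^p\sigma\Big)^{1/p}\Big(\int_Q v\Big)^{\frac1r-\frac1p}, \]
so that the $L^p(\sigma)$ norm of $f$ is produced cleanly. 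On the other hand, the ``natural test functions'' $h_R$ defined by $h_R\sigma=v^{1/r}\chi_R$ satisfy $\|h_R\|_{L^p(\sigma)}=v(R)^{1/p}$ and $|Q|^{\alpha/n}\|h_R\sigma\|_{r,Q}=|Q|^{\alpha/n-1/r}(\int_Q v\chi_R)^{1/r}$; consequently $\mathcal T_{r,out,\D}$ is precisely the supremum over $R$ of the $L^q(u)$ norm of the upper truncation $\sum_{Q\in\mathscr S:\,Q\supseteq R}$ of $\mathcal A_{\alpha,r,\mathscr S}$ applied to $h_R$, normalised by $\|h_R\|_{L^p(\sigma)}$, and $\mathcal T^{*}_{r,out,\D}$ is the analogous quantity for the transposed operator.

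For the strong-type inequality I would use $L^q(u)$--$L^{q'}(u)$ duality and estimate the bilinear form $\mathcal B(f,g)=\sum_{Q\in\mathscr S}|Q|^{\alpha/n}\|f\sigma\|_{r,Q}\int_Q g\,u$ for nonnegative $f,g$ with $\|f\|_{L^p(\sigma)}=\|g\|_{L^{q'}(u)}=1$. I would construct two families of stopping cubes in $\D$: a family $\mathcal F$ adapted to $f$ and built through the local $L^r(\sigma)$-averaging maximal operator (bounded on $L^p(\sigma)$ since $p>r$), and a family $\mathcal G$ adapted to $g$ through $u$-averages; sparseness of $\mathscr S$ yields for each the Carleson packing $\sum_{F'\in\mathrm{ch}(F)}\sigma(F')\le\tfrac12\sigma(F)$, and similarly in $u$. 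Splitting $\mathcal B$ according to the minimal cubes of $\mathcal F$ and of $\mathcal G$ containing a given $Q\in\mathscr S$, one is reduced to a double sum over pairs of stopping cubes; inside each corona the $L^r(\sigma)$-average of $f$ and the $u$-average of $g$ are comparable to their stopping values, and what remains is a cube-sum of exactly the shape tested by $\mathcal T_{r,out,\D}$ when the $f$-stopping cube sits inside the $g$-stopping cube, and by $\mathcal T^{*}_{r,out,\D}$ in the opposite case (this uses the identifications above together with one more application of Hölder's inequality). The outer sum over stopping cubes is then closed by Hölder's inequality and the two Carleson packing estimates, giving $\mathcal B(f,g)\lesssim_{n,p,\alpha}(\mathcal T_{r,out,\D}+\mathcal T^{*}_{r,out,\D})$.

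For the weak-type statement only $\mathcal T^{*}_{r,out,\D}$ is available, and I would argue directly: for $\lambda>0$ take the maximal dyadic cubes $\{Q_j\}$ over which the tail of $\mathcal A_{\alpha,r,\mathscr S}(f\sigma)$ exceeds $\lambda$, so that the super-level set is essentially $\bigcup_j Q_j$ (the cubes strictly inside the $Q_j$ being absorbed by iteration); on each $Q_j$ one has $\lambda\,u(Q_j)\le\int_{Q_j}\mathcal A_{\alpha,r,\mathscr S}(\chi_{Q_j}f\sigma)\,u$, and pairing with the extremal function and invoking $\mathcal T^{*}_{r,out,\D}$ together with the Hölder bound above yields $\lambda^q u(Q_j)\lesssim(\mathcal T^{*}_{r,out,\D})^q\int_{Q_j}|f|^p\sigma$, which sums over the pairwise disjoint $Q_j$ to the claimed weak inequality. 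The weighted case is then a change of variables: for $u=w^q_A$ and $v=w^{-r(p/r)'}$ one computes $\sigma=v^{p'/((p/r)'r)}=w^{-p'}$, and substituting $g=fw^{-p'}$ turns $\|f\|_{L^p(\sigma)}$ into $\|g\|_{L^p(w^p)}$, so the strong-type inequality becomes $\|\mathcal A_{\alpha,r,\mathscr S}(g)\|_{L^q(w^q_A)}\le C\|g\|_{L^p(w^p)}$. I expect the main difficulty to be the corona construction itself: because $r>1$ the quantities $\|f\sigma\|_{r,Q}$ are not dyadic-martingale averages, so the stopping time must be run through an $L^r$-averaging maximal operator rather than the usual one, and since only the ``outer'' testing constants are assumed one must carefully match the direction in which the corona tree is summed with the $Q\supseteq R$ structure of $\mathcal T_{r,out,\D}$ and $\mathcal T^{*}_{r,out,\D}$.
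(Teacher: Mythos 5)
Your preliminary computations are correct (the Hölder step showing $\|f\sigma\|_{r,Q}\le |Q|^{-1/r}(\int_Q|f|^p\sigma)^{1/p}(\int_Q v)^{1/r-1/p}$, the identification of the test functions $h_R$ with $h_R\sigma=v^{1/r}\chi_R$, and the final substitution $u=w_A^q$, $v=w^{-r(p/r)'}\Rightarrow\sigma=w^{-p'}$ are all as in the paper), and your plan differs from the paper's proof, which does not run a corona argument at all: it rewrites $\mathcal{A}_{\alpha,r,\mathscr{S}}(f)=\bigl(\tilde{\mathcal{A}}_{1/r,\mathscr{S}}^{1-r\alpha/n}(f^r)\bigr)^{1/r}$ and reduces the statement to Lemma \ref{SparseTilde} (the ``out'' version of Proposition 3.1 in \cite{FH18}), after checking $\tilde{\mathcal{T}}_{1/r,out}\le\mathcal{T}_{r,out,\D}$ and $\tilde{\mathcal{T}}^*_{1/r,out}\le\mathcal{T}^*_{r,out,\D}$.

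However, the core of your argument has a genuine gap, precisely at the point you yourself flag as ``the main difficulty''. In the parallel corona decomposition you describe, each $Q\in\mathscr{S}$ is assigned to the pair $(F,G)$ of \emph{minimal} stopping cubes containing it, so every cube in a given block satisfies $Q\subseteq F\cap G$; after freezing the $L^r(\sigma)$-average of $f$ and the $u$-average of $g$ at their stopping values, the residual cube-sums are therefore of the form $\sum_{Q\in\mathscr{S}:\,Q\subset R}$ for a fixed stopping cube $R$ — exactly the shape of the \emph{in} constants $\mathcal{T}_{r,in,\D},\mathcal{T}^*_{r,in,\D}$, not of the assumed \emph{out} constants, which control towers $\sum_{Q:\,R\subset Q}$. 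Since neither type of testing constant dominates the other in general, the claimed ``exact match'' does not hold, and your argument, if completed, proves the variant of the theorem with the in-testing hypotheses rather than the stated one. Exploiting $\mathcal{T}_{r,out,\D},\mathcal{T}^*_{r,out,\D}$ requires a differently organized decomposition (e.g.\ splitting $f$ over the disjoint stopping regions $E_F$, so that for each stopping cube $R$ the cubes seen are those with $Q\supseteq R$ and the tower structure of the out constants appears, together with a quasi-orthogonality/Carleson-embedding step to resum over $R$), which is what the quoted \cite{FH18}-type argument supplies. The same directional mismatch, together with the unjustified claim that the super-level set is ``essentially $\bigcup_j Q_j$'' with the inner cubes ``absorbed by iteration'', leaves the weak-type part resting only on $\mathcal{T}^*_{r,out,\D}$ unproved as well.
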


 There is an analogous result with global testing conditions, ${\mathcal{T}}_{r, in,\D}$ and ${\mathcal{T}}^{*}_{r,in,\D}$.

 %$$\tilde{\mathcal{T}}_{out}=\underset{R\in\mathscr{S}}{\sup} \;\; v(R)^{-1/p}\left\|\sum_{Q\in \mathscr{S}:R\subset Q}|Q|^{\alpha/n-1/r}v(Q)^{1/r}\chi_Q\right\|_{L^{q}(u)}<\infty,$$
 %$$\tilde{\mathcal{T}}^{*}_{out}=\underset{R\in\mathscr{S}}{\sup} \;\; u(R)^{-1/q'}\left\|\sum_{Q\in \mathscr{S}:R\subset Q}|Q|^{\alpha/n-1/r}v(Q)^{\frac1{r}-1}u(Q)\chi_Q\right\|_{L^{p'}\left(v\right)}<\infty.$$

 %\begin{remark}
 %  Observe that if we consider $u=u_A$ in  $\tilde{\mathcal{T}}_{out}$ and $\tilde{\mathcal{T}}^{*}_{out}$ we obtain the conditions $\tilde{\mathcal{T}}_{A,r,out}$ and $\tilde{\mathcal{T}}^{*}_{A,r,out}$, respectively.
% \end{remark}

 We consider the following sparse operator defined in \cite{FH18}, for $\mathscr{S}$ a sparse family, $0<t<\infty$ and $0<\beta\leq 1$,

 $$\tilde{\mathcal{A}}_{t,\mathscr{S}}^{\beta}g(x)=\left(\sum_{Q\in\mathscr{S}} \left(|Q|^{-\beta}\int_Q g\right)^t\chi_{Q}(x)\right)^{1/t}.$$

 Observe that
 \begin{equation}\label{CompSparse}
 \mathcal{A}_{\alpha,r,\mathscr{S}}(f)=\left(\tilde{\mathcal{A}}_{1/r,\mathscr{S}}^{1-r\alpha/n}(f^r)\right)^{1/r}.
 \end{equation}
 The next lemma follows the same proof as Proposition 3.1  in \cite{FH18} considering the testing constant $\tilde{\mathcal{T}}_{t, out}$ in place of the testing constant $\tilde{\mathcal{T}}_{t, in}$.

%{\red{poner la dependencia de $\D$ en la constante $\tilde{\mathcal{T}}_{t, out}$???}}
 \begin{lema}\cite{FH18}\label{SparseTilde}
    Let $1<p\leq q <\infty$, $t\in(0,p)$, $\beta\in (0,1]$, $\mathscr{S}$ be a sparse collection of dyadic cubes and let $(u,v)$ be a pair of weights. Define the testing constants
    \begin{align*}
    \tilde{\mathcal{T}}_{t, out}&:=\sup_{R\in \mathscr{S}} v(R)^{-t/p}\left\| \sum_{Q\in \mathscr{S} : R\subset Q} |Q|^{-\beta t} \left(\int_Q v\chi_R\right)^t \chi_Q\right\|_{L^{q/t}(u)},\\
    \tilde{\mathcal{T}}^*_{t,out}&:=\sup_{R\in \mathscr{S}} u(R)^{-1/(q/t)'}\left\| \sum_{Q\in \mathscr{S} : R\subset Q}|Q|^{-\beta t} v(Q)^{t-1} \left(\int_Q u\chi_R\right) \chi_Q\right\|_{L^{(p/t)'}(v)}.
    \end{align*}
    Then,
    \[
    \|\tilde{\mathcal{A}}_{t,\mathscr{S}}^{\beta}(v \cdot)\|^t_{L^p(v)\to L^q(u)}\lesssim \tilde{\mathcal{T}}_{t,out}+\tilde{\mathcal{T}}^*_{t,out}.
    \]
 \end{lema}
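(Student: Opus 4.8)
The statement to be proved is Lemma~\ref{SparseTilde} (the Fackler--Hänninen-type estimate for the sparse operator $\tilde{\mathcal{A}}_{t,\mathscr{S}}^{\beta}$ with \emph{out}-testing constants). The strategy is to reduce to the case $t=1$ and then invoke the two-weight theory for the linear sparse operator $\tilde{\mathcal{A}}_{1,\mathscr{S}}^{\beta}$ via the Sawyer-type duality/testing argument of \cite{FH18}. Concretely, I would first observe the elementary rescaling identity
\[
\tilde{\mathcal{A}}_{t,\mathscr{S}}^{\beta}(g)^t = \tilde{\mathcal{A}}_{1,\mathscr{S}}^{\beta t}(g^t),
\]
so that the bound $\|\tilde{\mathcal{A}}_{t,\mathscr{S}}^{\beta}(v\,\cdot)\|_{L^p(v)\to L^q(u)}^t$ is exactly the norm of $\tilde{\mathcal{A}}_{1,\mathscr{S}}^{\beta t}$ as an operator $L^{p/t}(v^{\,\ast})\to L^{q/t}(u)$ after the substitutions $P=p/t>1$, $Q=q/t\geq P$, and a suitable matching of the weight $v$ (replacing $f$ by $f^t$ turns $L^p(v)$ into $L^{p/t}$ of the appropriate weight). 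Since $t\in(0,p)$ we have $P>1$ and since $\beta\in(0,1]$ we have $\beta t\in(0,1]$, so the hypotheses of the linear case are met.

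Next I would prove the linear statement: for $1<P\le Q<\infty$, $0<\gamma\le 1$, a sparse family $\mathscr{S}$ and weights $(u,w)$,
\[
\|\tilde{\mathcal{A}}_{1,\mathscr{S}}^{\gamma}(w\,\cdot)\|_{L^P(w)\to L^Q(u)}\lesssim \mathsf{T}+\mathsf{T}^{*},
\]
where $\mathsf{T}$, $\mathsf{T}^{*}$ are the (out) testing constants displayed in the lemma, specialized to exponents $P,Q$. The proof of this is the standard Sawyer testing machinery adapted to sparse operators: by duality it suffices to control $\sum_{Q\in\mathscr{S}} |Q|^{-\gamma}\big(\int_Q f w\big)\big(\int_Q g\,u\big)$ for nonnegative $f\in L^P(w)$, $g\in L^{Q'}(u)$; one performs a Calderón--Zygmund-type stopping-time decomposition of $\mathscr{S}$ with respect to the averages $\langle f\rangle^w_Q$ and $\langle g\rangle^u_Q$ (principal cubes), groups the cubes according to their stopping ancestors, and on each such group estimates the inner sum by the testing constant $\mathsf{T}$ (for the $f$-side) or $\mathsf{T}^{*}$ (for the $g$-side); finally one sums over principal cubes using that the stopping construction makes the relevant measures (here $w(E_P^{\mathrm{stop}})$, $u(E_P^{\mathrm{stop}})$) a Carleson/disjointification sequence, and applies Hölder in the exponents $P$ and $Q'$. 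This is precisely the argument in Proposition~3.1 of \cite{FH18}, the only change being that the sum runs over cubes $Q$ \emph{containing} the stopping cube $R$ rather than contained in $R$ — which is why the $\mathrm{out}$ testing constants appear in place of the $\mathrm{in}$ ones; the stopping-time bookkeeping is symmetric under this change and no new idea is needed.

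The main obstacle is organizational rather than conceptual: one must verify that the roles of ``$R\subset Q$'' and ``$Q\subset R$'' can genuinely be interchanged in the Fackler--Hänninen argument without affecting the Carleson-packing estimates, i.e.\ that the principal-cube decomposition still yields pairwise comparably-disjoint sets and a summable sequence when the inner summation is over ancestors. After that, one simply tracks the exponents through the $t$-th power rescaling: the testing constants $\tilde{\mathcal{T}}_{t,out}$ and $\tilde{\mathcal{T}}^{*}_{t,out}$ in the present statement are, by inspection, exactly $\mathsf{T}$ and $\mathsf{T}^{*}$ for the rescaled linear problem with $\gamma=\beta t$, $P=p/t$, $Q=q/t$, so the conclusion
\[
\|\tilde{\mathcal{A}}_{t,\mathscr{S}}^{\beta}(v\,\cdot)\|_{L^p(v)\to L^q(u)}^t
=\|\tilde{\mathcal{A}}_{1,\mathscr{S}}^{\beta t}\|^{\,}_{L^{p/t}\to L^{q/t}}
\lesssim \tilde{\mathcal{T}}_{t,out}+\tilde{\mathcal{T}}^{*}_{t,out}
\]
follows immediately, which is the desired inequality.
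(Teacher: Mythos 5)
There is a genuine gap at the very first step: the ``elementary rescaling identity'' $\tilde{\mathcal{A}}_{t,\mathscr{S}}^{\beta}(g)^t = \tilde{\mathcal{A}}_{1,\mathscr{S}}^{\beta t}(g^t)$ is false. By definition $\tilde{\mathcal{A}}_{t,\mathscr{S}}^{\beta}(g)^t=\sum_{Q\in\mathscr{S}}\bigl(|Q|^{-\beta}\int_Q g\bigr)^t\chi_Q$, whereas $\tilde{\mathcal{A}}_{1,\mathscr{S}}^{\beta t}(g^t)=\sum_{Q\in\mathscr{S}}|Q|^{-\beta t}\bigl(\int_Q g^t\bigr)\chi_Q$; the $t$-th power of the integral is not the integral of the $t$-th power. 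This is not a cosmetic slip: for $t\neq 1$ the map $f\mapsto\tilde{\mathcal{A}}_{t,\mathscr{S}}^{\beta}(fv)^t$ is $t$-homogeneous but \emph{not} linear in $f$, so the substitution $f\mapsto f^t$ cannot convert the problem into a boundedness statement for the linear sparse operator $\tilde{\mathcal{A}}_{1,\mathscr{S}}^{\gamma}$, and Sawyer's linear testing theorem is not available by rescaling. Nor can the identity be replaced by an inequality in the useful direction: for $0<t<1$ H\"older gives $\bigl(\int_Q g\bigr)^t\geq |Q|^{t-1}\int_Q g^t$ (a lower bound on the operator), and for $t>1$ Jensen gives an upper bound but with testing constants involving $\int_Q v^t\chi_R$ rather than $\bigl(\int_Q v\chi_R\bigr)^t$, so in neither case do you recover the constants $\tilde{\mathcal{T}}_{t,out}$, $\tilde{\mathcal{T}}^*_{t,out}$ of the statement. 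You may have been misled by the genuine identity \eqref{CompSparse} used elsewhere in the paper, $\mathcal{A}_{\alpha,r,\mathscr{S}}(f)=\bigl(\tilde{\mathcal{A}}_{1/r,\mathscr{S}}^{1-r\alpha/n}(f^r)\bigr)^{1/r}$: there the exponent sits \emph{inside} the cube average (an $L^r$-mean of $f$), which is exactly the structural feature your claimed identity lacks.

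Because of this, the remainder of your argument (running the parallel-corona/principal-cube Sawyer argument for the \emph{linear} operator and then transporting the exponents) does not prove the lemma. The intended route — and the one the paper takes — is to work with the $t$-power operator directly: the lemma is obtained by repeating the proof of Proposition~3.1 of \cite{FH18}, which handles $\tilde{\mathcal{A}}_{t,\mathscr{S}}^{\beta}$ for general $t\in(0,p)$ by a stopping-time/testing argument tailored to the $\ell^t$-type aggregation, with the only modification being that the summation over cubes is taken over ancestors $R\subset Q$, so that the ``out'' testing constants $\tilde{\mathcal{T}}_{t,out}$, $\tilde{\mathcal{T}}^*_{t,out}$ replace the ``in'' ones. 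Your closing remark about interchanging ``$R\subset Q$'' and ``$Q\subset R$'' is the right adaptation to make, but it must be made inside the FH18 argument for general $t$, not after an (invalid) reduction to $t=1$.
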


%\begin{remark} If $\mathscr{S}$ be a sparse family from a family of dyadic cubes $\D$. Then, it is easy to see that $\tilde{\mathcal{T}}_{t, out}\leq \tilde{\mathcal{T}}_{t, out, \D}$ and $\tilde{\mathcal{T}}^*_{t, out}\leq \tilde{\mathcal{T}}^*_{t, out, \D}$, {\red defined in the previous section}.
%\end{remark}

 \begin{proof}[Proof of Theorem \ref{SparseTesting}]
    %{\vb Se usa ideas de \cite{FH18} \cite{UTLS12}}

    Using \eqref{CompSparse}, we have
    \begin{align*}
    \left(\int_{\mathbb{R}^{n}}\mathcal{A}_{\alpha,r,\mathscr{S}}(f\sigma)^qu\right)^{1/q}
    &=\left(\int_{\mathbb{R}^{n}}\left(\tilde{\mathcal{A}}_{\mathscr{S}}^{1/r,1-\alpha/n}(f^r\sigma^r)\right)^{q/r}u\right)^{1/q}
    \\&=\left(\int_{\mathbb{R}^{n}}\left(\tilde{\mathcal{A}}_{\mathscr{S}}^{1/r,1-\alpha/n}(f^r\sigma^rv^{-1}v)\right)^{q/r}u\right)^{1/q}.
    \end{align*}
    If $(u,v)$ satisfies the testing constants  $\tilde{\mathcal{T}}_{1/r,out},\tilde{\mathcal{T}}^*_{1/r,out}$ with $p/r,q/r$ and $\beta=1-\alpha/n$, therefore, by Lemma \ref{SparseTilde}, we have
        \begin{align*}
    \left(\int_{\mathbb{R}^{n}}\mathcal{\mathcal{A}}_{\alpha,r,\mathscr{S}}(f\sigma)^qu\right)^{1/q}
    &=\left(\int_{\mathbb{R}^{n}}\left(\tilde{\mathcal{A}}_{\mathscr{S}}^{1/r,1-\alpha/n}(f^r\sigma^rv^{-1}v)\right)^{q/r}u\right)^{(r/q) 1/r}
    \\&\leq C_{n,p,\alpha}(\tilde{\mathcal{T}}_{1/r,out}+\tilde{\mathcal{T}}^*_{1/r,out})\left(\int_{\mathbb{R}^{n}}|f^r\sigma^rv^{-1}|^{p/r}v\right)^{(r/p) 1/r}
    \\&= C_{n,p,\alpha}(\tilde{\mathcal{T}}_{1/r,out}+\tilde{\mathcal{T}}^*_{1/r,out})\left(\int_{\mathbb{R}^{n}}|f|^p\sigma^pv^{1-p/r}\right)^{1/p}.
    \end{align*}
    As $v=\sigma^{r\frac{(p/r)'}{p'}}$, then
    \begin{align*}
    \left(\int_{\mathbb{R}^{n}}\mathcal{A}_{\alpha,r,\mathscr{S}}(f\sigma)^qu\right)^{1/q}
    = C_{n,p,\alpha}(\tilde{\mathcal{T}}_{1/r,out}+\tilde{\mathcal{T}}^*_{1/r,out})\left(\int_{\mathbb{R}^{n}}|f|^p\sigma\right)^{1/p}.
    \end{align*}
    Observe that if the pair $(u,v)=(u,\sigma^{r\frac{(p/r)'}{p'}})$ satisfies the testing constants ${\mathcal{T}}_{r,out,\D},{\mathcal{T}}^*_{r,out,\D}$,  with $\alpha$, $p,\,q$ then the pair  $(u,v)$ satisfies the testing constants $\tilde{\mathcal{T}}
    _{1/r,out},\tilde{\mathcal{T}}^*_{1/r,out}$ with  $\beta=1-\alpha/n$,  $p/r$ and $q/r$.  Moreover $\tilde{\mathcal{T}}
    _{1/r,out}{ \leq }{\mathcal{T}}_{r,out,\D}$ and $\tilde{\mathcal{T}}^*
    _{1/r,out}{ \leq }{\mathcal{T}}^*_{r,out,\D}$.

    %\begin{align*}
    %\tilde{\mathcal{T}}_{r,in}=\underset{R\in\mathcal{S}}{\sup} \;\; \sigma^{r\frac{(p/r)'}{p'}}(R)^{-1/p}\|\sum_{Q\in \mathcal{S}:Q\subset R}|Q|^{-(1-\alpha/n)/r}\sigma^{r\frac{(p/r)'}{p'}}(Q)^{1/r}\chi_Q\|_{L^{q}(u)}
    %\end{align*}

    %$$\tilde{\mathcal{T}}^{*}_{r,in}=\underset{R\in\mathcal{S}}{\sup} \;\; u(R)^{-1/p'}\|\sum_{Q\in \mathcal{S}:Q\subset R}|Q|^{-(1-\alpha/n)/r}\sigma^{r\frac{(p/r)'}{p'}}(Q)^{\frac1{r}-1}u(Q)\chi_Q\|_{L^{p'}\left(\sigma^{r\frac{(p/r)'}{p'}}\right)}.$$
    %Then
    Therefore, we get
    \begin{align*}
    \left(\int_{\mathbb{R}^{n}}\mathcal{A}_{\alpha,r,\mathscr{S}}(f\sigma)^qu\right)^{1/q}
    \leq C_{n,p,\alpha}({\mathcal{T}}_{r,out,\D}+{\mathcal{T}}^*_{r,out,\D})\left(\int_{\mathbb{R}^{n}}|f|^p\sigma\right)^{1/p}.
    \end{align*}

 If we consider the testing constants  ${\mathcal{T}}_{r,in,\D}$ and ${\mathcal{T}}^{*}_{r,in,\D}$, the proof is similar, using ideas in \cite{LSUT10} %{\red to obtain an analogous result as the previous lemma. saca\'{\i}ia esta frase}
 \end{proof}

 \begin{proof}[Proof of Theorem \ref{FuerteT}]
    By hypothesis and using Theorem \ref{Sparse} we have that,
    \[
    |T_{\alpha,m}f(x)|\leq c \sum_{j=1}^{3^n}\sum_{i=1}^m \mathcal{A}_{\alpha,s,\mathscr{S}_j}f(A_i^{-1}x),%\sum_{Q\in \mathscr{S}_i} |Q|^{\alpha/n}\|f\|_{s,Q}\chi_Q(A_i^{-1}x)
    \]
    then
    \[ \|T_{\alpha,m}(f\sigma)\|_{L^q(u)} \leq c \sum_{j=1}^{3^n}\sum_{i=1}^m \|\mathcal{A}_{\alpha,s,\mathscr{S}_j}(f\sigma)\|_{L^q(u_{A_i})}.
    \]
    Since ${\mathcal{T}}_{A_i,s,out,\D_j},{\mathcal{T}}^{*}_{A_i,s, out,\D_j}<\infty$, for $1\leq i\leq m$ and $1\leq j \leq 3^n $, by Theorem \ref{SparseTesting} we get,
    \[\|\mathcal{A}_{\alpha,s,\mathscr{S}_j}(f\sigma)\|_{L^q(u_{A_i})}\lesssim ({\mathcal{T}}_{A_i,s,out,\D_j} +{\mathcal{T}}^{*}_{A_i,s, out,\D_j}) \|f\|_{L^p(\sigma)},
    \]
    then
    \[ \|T_{\alpha,m}(f\sigma)\|_{L^q(u)} \leq c \|f\|_{L^p(\sigma)} \sum_{j=1}^{3^n}\sum_{i=1}^m ({\mathcal{T}}_{A_i,s,out,\D_j} +{\mathcal{T}}^{*}_{A_i,s, out,\D_j}).
    \]
 \end{proof}

 For the proof of Theorem \ref{FuerteT1} we need the following results:

 \begin{lema} Let $(u,v)$  be a  pair of weight such that $(u_{A},v)\in \mathcal{M}_{\alpha, p,q}$ and $(v,u_{A})\in \mathcal{M}_{\alpha,q',p'}$, then, ${\mathcal{T}}_{A,out,\D}<\infty$ and  ${\mathcal{T}}^{*}_{A,out,\D}<\infty$ respectively, for any family of dyadic cubes $\D$.
 \end{lema}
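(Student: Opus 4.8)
The plan is to show that the Sawyer-type testing conditions for the fractional maximal operator (with the weight pair shifted by the matrix $A$) imply the dyadic testing conditions $\mathcal{T}_{A,out,\D}$ and $\mathcal{T}^*_{A,out,\D}$. The essential observation is that the ``outer'' dyadic sums appearing in $\mathcal{T}_{A,out,\D}$ and $\mathcal{T}^*_{A,out,\D}$ (with $r=1$) can be pointwise compared with the fractional maximal operator $M_\alpha$ applied to $\chi_R$ times the appropriate weight. First I would fix a dyadic family $\D$ and a cube $R\in\D$, and examine the function
\[
F_R(x)=\sum_{Q\in\D:\,R\subset Q}|Q|^{\alpha/n-1}\Bigl(\int_Q v\chi_R\Bigr)\chi_Q(x)=\sum_{Q\in\D:\,R\subset Q}|Q|^{\alpha/n-1}v(R)\,\chi_Q(x),
\]
since $R\subset Q$ forces $\int_Q v\chi_R=v(R)$. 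For $x$ belonging to the smallest relevant cube (i.e.\ $x\in R$, or more generally for $x$ in the chain of dyadic ancestors of $R$), each term $|Q|^{\alpha/n-1}v(R)\le |Q|^{\alpha/n-1}\int_Q v = |Q|^{\alpha/n}\,\|v\|_{1,Q}\le M_\alpha(v\chi_R... )$ — more precisely $|Q|^{\alpha/n-1}v(R) \le |Q|^{\alpha/n-1}\int_Q v\chi_R \cdot (v(Q)/v(R))$, so one must be a little careful; the clean route is to note $|Q|^{\alpha/n-1}\int_Q v\chi_R = |Q|^{\alpha/n}\,\langle v\chi_R\rangle_Q \le M_\alpha(v\chi_R)(x)$ for every $x\in Q$, hence for every $x\in R$.

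Next I would sum a geometric-type series over the nested cubes $Q\supset R$. Because the cubes $Q\in\D$ with $R\subset Q$ form a single increasing chain $R=Q_0\subsetneq Q_1\subsetneq Q_2\subsetneq\cdots$ with $|Q_{k+1}|=2^n|Q_k|$, the quantities $|Q_k|^{\alpha/n-1}v(R)$ decay geometrically in $k$ (as $\alpha<n$), so the full sum $F_R(x)$ is bounded, up to a dimensional constant, by its largest term, which is $|R|^{\alpha/n-1}v(R) \le M_\alpha(v\chi_R)(x)$ for $x\in R$, and more generally by $\inf_{x\in Q_k} M_\alpha(v\chi_R)(x)$ restricted to each annulus $Q_k\setminus Q_{k-1}$. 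Thus one gets a pointwise bound $F_R(x)\lesssim M_\alpha(\chi_R v)(x)$ on all of $\mathbb{R}^n$, whence
\[
\Bigl\|\sum_{Q\in\D:\,R\subset Q}|Q|^{\alpha/n-1}\Bigl(\int_Q v\chi_R\Bigr)\chi_Q\Bigr\|_{L^{q}(u_A)} \lesssim \Bigl(\int M_\alpha(\chi_R v)^q u_A\Bigr)^{1/q}\lesssim [u_A,v]_{\mathcal{M}_{\alpha,p,q}}\,v(R)^{1/p},
\]
using the hypothesis $(u_A,v)\in\mathcal{M}_{\alpha,p,q}$ and its testing inequality. Dividing by $v(R)^{1/p}$ and taking the supremum over $R\in\D$ gives $\mathcal{T}_{A,out,\D}\lesssim [u_A,v]_{\mathcal{M}_{\alpha,p,q}}<\infty$. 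For the dual constant $\mathcal{T}^*_{A,out,\D}$, the same argument applies verbatim after interchanging the roles of the two weights and of the exponents $(p,q)\leftrightarrow(q',p')$: one compares $\sum_{Q\supset R}|Q|^{\alpha/n-1}(\int_Q u_A\chi_R)\chi_Q$ with $M_\alpha(\chi_R u_A)$ and invokes $(v,u_A)\in\mathcal{M}_{\alpha,q',p'}$ to conclude $\mathcal{T}^*_{A,out,\D}\lesssim[v,u_A]_{\mathcal{M}_{\alpha,q',p'}}<\infty$.

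The main obstacle I anticipate is bookkeeping rather than conceptual: one must verify carefully that the dyadic sum over the chain of ancestors really is dominated by the fractional maximal function uniformly in $R$ — in particular checking that the geometric decay kicks in with a constant depending only on $n$ and $\alpha$ (and not on $R$ or the weights), and handling the endpoint where $\alpha$ could be $0$, in which case the ``geometric series'' argument must instead be absorbed into the $\mathcal{M}_{0,p}$ testing condition directly (here the sum over ancestors of a fixed term is not geometrically summable, so one should not sum freely but rather recognize the partial sums as controlled by $M(\chi_R v)$ on each annulus and use the $L^q(u_A)$-norm additivity over the disjoint annuli). I would also double-check the precise normalization of the testing constants (the factors $|Q|^{\alpha/n-1/r}$ with $r=1$ matching $|Q|^{\alpha/n-1}$, and the $v(Q)^{1/r-1}=v(Q)^0=1$ simplification) so that the comparison with Sawyer's $\mathcal{M}_{\alpha,p,q}$ constant is exact.
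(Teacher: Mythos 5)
Your proposal is correct and follows essentially the same route as the paper: for a fixed $R\in\D$ the outer sum collapses, along the chain of dyadic ancestors of $R$, to a geometric series (the exponent $\alpha/n-1<0$ gives decay depending only on $n,\alpha$), is dominated pointwise by $M_\alpha(\chi_R v)$, and the hypothesis $(u_A,v)\in\mathcal{M}_{\alpha,p,q}$ then gives the bound $\lesssim [u_A,v]_{\mathcal{M}_{\alpha,p,q}}v(R)^{1/p}$, with the dual constant handled symmetrically from $(v,u_A)\in\mathcal{M}_{\alpha,q',p'}$. If anything you are more careful than the paper, whose computation only evaluates the sum at $x\in R$ even though the $L^q(u_A)$ norm is over all of $\mathbb{R}^n$ — your annulus-by-annulus bound plus Sawyer's norm inequality covers every point where the sum is nonzero — and your anticipated difficulty at $\alpha=0$ is unfounded, since with $r=1$ the decay factor is $2^{kn(\alpha/n-1)}$, which is geometrically summable for every $0\le\alpha<n$.
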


 \begin{proof}
    We will only see that ${\mathcal{T}}_{A,out,\D}<\infty$, the other case it is prove in a similar way.

% Fix a cube $Q$ and $x\in \R$ such that there exists a dyadic cube  {\red $P\in \D$ with $x\in P$ and $Q\subsetneq P$ }. Let $Q_0$

 Let $R$ be a cube and $x\in R$, let $Q_k \in \D$ such that $R\subset Q_k$ then $l(Q_k)=2^k l(R)$,
\begin{align*}
 \sum_{Q\in \D:R\subset Q}|Q|^{\alpha/n-1/r}\left(\int_{Q} v\chi_R\right)^{1/r}\chi_Q(x)
 &= \sum_{k=0}^{\infty} |Q_k|^{\alpha/n-1/r}\left(\int_{Q_k} v\chi_R\right)^{1/r}\chi_{Q_k}(x)
  \\&=|R|^{\alpha/n-1/r}\left(\int_{R} v\right)^{1/r} \sum_{k=0}^{\infty} 2^{k(\alpha/n-1/r)}
  \\&\leq C M_{\alpha,r}(v\chi_R)(x)\chi_R(x).
 \end{align*}
Then,
$$\mathcal{T}_{A,r,out,\D} \leq C \underset{R\in\D}{\sup} \;\; v(R)^{-1/p}\left\|M_{\alpha,r}(v\chi_R)(x)\chi_R(x)\right\|_{L^{q}(u_A)}=C[u_A,v]_{\mathcal{M}_{\alpha, p,q}}. $$

 \end{proof}

 \begin{proof}[Proof of Theorem \ref{FuerteT1}]
 The proof follows from Theorem \ref{FuerteT} and the previous Lemma.

 \end{proof}

 %If $\phi(t)=t$, the Orlicz spaces asociated to $\{\Psi_i\}_{i}$ are $L^{\infty}$, (i.e. $\Psi_i\equiv \infty$). We prove the following

 \begin{proof}[Proof of Theorem \ref{APart}]
 %
 %Then, by \cite{IFRi18} we have
 %  $$T_{\alpha,m}:L^p(w^p)\to L^q(w^q).$$

    Using the sparse domination, Theorem \ref{Sparse}, we have
    \begin{align}\label{eq: desig1}
    \|T_{\alpha, m}f\|_{L^q(w^q)}\leq C \sum_{j=1}^{3^n}\sum_{i=1}^m \| \mathcal{A}_{\alpha, r,\mathscr{S}_j}f\|_{L^q(w_{A_i}^q)}.
    \end{align}
    Since $A_j=A_i^{-1}$ and $w^s\in \bigcap_{i=1}^m\mathcal{A}_{A_i,\frac{p}{s},\frac{q}{s}}$, then $w\in A_{\frac{p}{s},\frac{q}{s}}$ and $w_{A_i}\simeq w$, $w_{A_l}\lesssim w$ for $l\not=i,j$. So, we have $w^q,w^{-s(p/s)'}\in A_{\infty}$.
    In the other hand, let $A$ be a invertible matrix, if $w^s\in\mathcal{A}_{A,\frac{p}{s},\frac{q}{s}}$ then the pair $(w_A^s,w^{-s})$ satisfies the $A_{\frac{p}{s},\frac{q}{s}}$ condition.

    Since $(w_A^s,w^{-s})$ satisfies the $A_{\frac{p}{s},\frac{q}{s}}$ condition  and $w^q,w^{-s(p/s)'}\in A_{\infty}$, we obtain
        \begin{align}\label{eq: desig2}
        \|\mathcal{A}_{\alpha,s,\mathscr{S}}f\|_{L^q(w_A^q)}\leq c_n [w^s]_{A_{A,\frac{p}{s},\frac{q}{s}}}^{\max\left\{1-\frac{\alpha}{n},\frac{(p/s)'}{q}\left(1-\frac{\alpha s}{n}\right)\right\}} \|f\|_{L^p(w^p)},
        \end{align}
 and the exponent is sharp. The proof of this inequality is analogous to the proved in \cite{IFRV18}.

 Using \eqref{eq: desig1} and \eqref{eq: desig2}, we have
 \begin{align*}
    \|T_{\alpha, m}f\|_{L^q(w^q)}\leq C\|f\|_{L^p(w^p)} \sum_{i=1}^m  [w^s]_{A_{A_i,\frac{p}{s},\frac{q}{s}}}^{\max\left\{1-\frac{\alpha}{n},\frac{(p/s)'}{q}\left(1-\frac{\alpha s}{n}\right)\right\}}.
 \end{align*}

 \end{proof}

\section{Proof of sparse domination}\label{secsparse}
In this section, in the proof of Lemmas \ref{lemaalpha} and \ref{lemasparse2},  we consider only $m=2$. We will write for $\alpha\geq 0$, $T_\alpha:=T_{\alpha,2}$ and   also $T=T_{0}$. For general case, the results and proofs are analogous.

First, we need some end-point estimates for the maximal operator
$M_{T_{\alpha}}$,
%Let $0 \leq \alpha < n$ and $T_{\alpha}$ be the operator defined by \eqref{eq: defT}, then we define
the grand maximal truncated of $T_{\alpha}$, is defined by
\begin{align*}
M_{T_{\alpha}}f(x)=\underset{\underset{Q_2\ni A_2^{-1}x}{Q_1\ni
A_1^{-1}x}}{\sup} \; \underset{\xi \in Q_1\cup Q_2
}{\sup\text{ess}}|T_\alpha(f\chi_{\R\setminus 3(Q_1\cup Q_2)})(\xi)|,
\end{align*}
and the local version
\begin{align*}
M_{T_{\alpha},Q_0^{1}\cup
Q_0^{2}}f(x)=\underset{\underset{Q_0^{2}\subset Q_2\ni
A_2^{-1}x}{Q_0^{1}\subset Q_1\ni A_1^{-1}x}}{\sup} \; \underset{\xi
\in Q_1\cup Q_2 }{\sup\text{ess}}|T_\alpha(f\chi_{3(Q_0^{1}\cup
Q_0^{2})\setminus 3(Q_1\cup Q_2)})(\xi)|,
\end{align*}
where the supremum is taking over all $Q_i$ cubes in $Q_0^{i}$ for
$i=1,2$.

\begin{lema}\label{lemaalpha}
Let $0<\alpha<n$ and %For each $1\leq i \leq 2$,
 $0<\alpha_1,\alpha_2<n$ such that $\alpha_1+\alpha_2=n-\alpha$. Let $1<r_1,r_2\leq \infty$ and $s\geq 1$ such that $\frac1{r_1}+\frac1{r_2}+\frac1{s}=1$. For each $1\leq i \leq 2$, let $A_i$ be invertible
matrices satisfying hypothesis (H) and  $k_i\in S_{n-\alpha_i,r_i}\cap
H_{n-\alpha_i,r_i}$. The following estimates hold:
\begin{enumerate}[(i)]
\item for a.e. $A_i^{-1}x\in Q_0^{i}$
$$|T_{\alpha}(f\chi_{3(Q_0^{1}\cup Q_0^{2})})(x)|\leq M_{T_{\alpha},Q_0^{1}\cup Q_0^{2}}f(x),$$
\item for all $x\in \R$
$$M_{T_{\alpha}}(f)(x)\lesssim \sum_{i=1}^m M_{\alpha,s}(A_i^{-1}x) + |T_{\alpha}(f)(x)|.$$
\end{enumerate}
Therefore, %Futhermore, if $\phi \in X_{\alpha}$
$$|\{x\in\R : M_{T_{\alpha}}(f)(x)>\lambda\}|^{\frac{n-\alpha s}{n}}\leq c^s\int_{\{x\in Q : f(x)\geq \lambda|Q|^{\alpha/n}/c\}} \left(\frac{|f(x)|}{\lambda|Q|^{\alpha/n}}\right)^s dx.$$
\end{lema}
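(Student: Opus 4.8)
The plan is to establish the two pointwise bounds (i) and (ii), and then deduce the weak-type inequality from (ii).

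For (i), I would fix $x$ with $A_1^{-1}x\in Q_0^{1}$, $A_2^{-1}x\in Q_0^{2}$, which is moreover a Lebesgue point of $T_\alpha(f\chi_{3(Q_0^1\cup Q_0^2)})$, and take cubes $Q_j^{i}\subset Q_0^{i}$ with $A_i^{-1}x\in Q_j^{i}$ and $|Q_j^{i}|\to 0$. From
\[
T_\alpha(f\chi_{3(Q_0^1\cup Q_0^2)})=T_\alpha\bigl(f\chi_{3(Q_0^1\cup Q_0^2)\setminus 3(Q_j^1\cup Q_j^2)}\bigr)+T_\alpha\bigl(f\chi_{3(Q_j^1\cup Q_j^2)}\bigr),
\]
the first summand is, by the very definition of $M_{T_\alpha,Q_0^1\cup Q_0^2}$ with the choice $Q_i=Q_j^{i}$, at most $M_{T_\alpha,Q_0^1\cup Q_0^2}f(x)$, while the second tends to $0$ as $j\to\infty$: the set $3(Q_j^1\cup Q_j^2)$ shrinks to $\{A_1^{-1}x,A_2^{-1}x\}$, and thanks to $(H)$ (which forces $A_1^{-1}x\neq A_2^{-1}x$ for $x\neq 0$) together with $\alpha_i<n$ and $k_i\in S_{n-\alpha_i,r_i}$, the function $y\mapsto K(x,y)f(y)$ is integrable near each of those two points, so its integral over the shrinking set vanishes in the limit. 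Passing to the limit gives (i); this part is routine, the only delicate point being the usual Lebesgue-point bookkeeping.

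The heart of the argument is (ii). With admissible $Q_1\ni A_1^{-1}x$, $Q_2\ni A_2^{-1}x$ and $\xi$ the point from the essential supremum, say $\xi\in Q_1$, I would split $T_\alpha(f\chi_{\R\setminus 3(Q_1\cup Q_2)})(\xi)$ into three pieces: a principal term, namely $T_\alpha f$ evaluated at the point $A_1^{-1}x$ (which differs from $x$ only by the invertible matrix $A_1$); an oscillation error measuring the change of the kernels $k_i(\xi-A_i\,\cdot)$ under the displacement $\xi\mapsto A_1^{-1}x$; and a near term accounting for the contribution of $f$ on $3(Q_1\cup Q_2)$. Since $\frac1{r_1}+\frac1{r_2}+\frac1{s}=1$, Hölder's inequality separates the three factors $k_1$, $k_2$, $f$, so the oscillation error is summed over dyadic annuli using the Hörmander conditions $k_i\in H_{n-\alpha_i,r_i}$ and the near term using the size conditions $k_i\in S_{n-\alpha_i,r_i}$; in both cases, after summing a geometric series, one is left with averages of $|f|^s$ over dilates of $A_iQ_i$, which reassemble into $M_{\alpha,s}f(A_i^{-1}x)$. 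Hypothesis $(H)$---invertibility of the $A_i$ and of the $A_i-A_j$---is used here to compare the two scales $|x-A_1y|$ and $|x-A_2y|$, via $|(A_1-A_2)y|$, so that $\R\setminus 3(Q_1\cup Q_2)$ is genuinely far from $\xi$ in both kernels simultaneously. I expect this step, which entangles the geometry of two simultaneous matrix dilations with the mixed-norm size and Hörmander estimates, to be the main obstacle.

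Finally, for the displayed weak-type inequality I would argue from (ii): up to a fixed constant, $\{M_{T_\alpha}f>\lambda\}$ is covered by the sets $\{x:M_{\alpha,s}f(A_i^{-1}x)>c\lambda\}$ and $\{x:|T_\alpha f(A_i^{-1}x)|>c\lambda\}$, $i=1,2$. After the harmless change of variables $x\mapsto A_ix$ (which only produces the Jacobian $|\det A_i|$), each set of the first type is estimated using $M_{\alpha,s}f=(M_{\alpha s}(|f|^s))^{1/s}$ and the weak-type $\bigl(1,\tfrac{n}{n-\alpha s}\bigr)$ bound for the fractional maximal operator $M_{\alpha s}$ (recall $\alpha s<n$ is implicit in the statement, since otherwise the exponent $\tfrac{n-\alpha s}{n}$ is non-positive), and each set of the second type by the endpoint weak bound for $T_{\alpha,m}$ at exponent $s$, which follows from $k_i\in S_{n-\alpha_i,r_i}\cap H_{n-\alpha_i,r_i}$ (cf.\ \cite{IFRi18,RU14}). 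This yields $|\{M_{T_\alpha}f>\lambda\}|^{\frac{n-\alpha s}{n}}\lesssim \lambda^{-s}\int_{\R}|f|^s$. To pass to the truncated, $Q$-localized form---$f$ being supported in $Q$, as in the situation where the estimate is used---I would split $f$ at the natural threshold, a multiple of $\lambda$ times the power of $|Q|$ forced by the fractional scaling: the low part is bounded and supported in $Q$, so both $M_{\alpha,s}$ of it and, via the size condition, $T_\alpha$ of it are pointwise $\lesssim\lambda$, hence do not reach the level $\lambda$; applying the weak-type bound just obtained to the high part gives the stated inequality.
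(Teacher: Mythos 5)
Your plan follows the paper's architecture quite closely: for (i) the same split into the part of $f$ outside $3(\tilde Q^1\cup\tilde Q^2)$ (which is $\leq M_{T_\alpha,Q_0^1\cup Q_0^2}f(x)$ by definition) plus a shrinking near part, and for (ii) the same three-term split (oscillation handled by the H\"ormander conditions, intermediate annuli by the size conditions, plus a principal term), with H\"older in the exponents $r_1,r_2,s$ and the regions determined by whether $|x-A_1z|\le|x-A_2z|$ or not. One remark on (i): the paper does not argue by ``integrability near the two singular points''; it bounds the near part quantitatively by $c\left(M_s f(A_1^{-1}x)+M_sf(A_2^{-1}x)\right)R^{\alpha}$ and lets $R\to0$, which is where $\alpha>0$ is used — your shortcut ultimately requires the same H\"older/size computation, since $f$ is only controlled in $L^s$ locally.

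There are, however, two genuine deviations in (ii) and in the weak-type conclusion, and the second is a real gap. First, you recenter the principal term at $A_1^{-1}x$, i.e.\ you would prove $M_{T_\alpha}f(x)\lesssim\sum_i\left(M_{\alpha,s}f(A_i^{-1}x)+|T_\alpha f(A_i^{-1}x)|\right)$, whereas (ii) as stated has $|T_\alpha f(x)|$: the paper subtracts $T_\alpha(f\chi_{\R\setminus B_x})(x)$ at the point $x$ itself and ends with $T_\alpha(|f|)(x)$, so your variant is not the asserted estimate (it might still feed a weak-type bound, but it is not (ii)). Second, and more importantly, to pass to the level-set estimate you must control $|\{|T_\alpha f|>\lambda\}|$, and you dispose of this by citing ``the endpoint weak bound for $T_{\alpha,m}$ at exponent $s$ (cf.\ \cite{IFRi18,RU14})''. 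No such endpoint weak $(s,\tfrac{ns}{n-\alpha s})$ estimate is on record in those references (they provide the Coifman--Fefferman inequality and strong weighted bounds), and if it were directly citable the last step of the lemma would be vacuous. The paper closes the argument by applying the Coifman--Fefferman inequality of \cite{IFRi18} to $T_\alpha(|f|)$ in the weak norm $L^{\frac{ns}{n-\alpha s},\infty}$ with Lebesgue measure (so $w\equiv1\in\mathcal{A}_\infty$), which dominates $T_\alpha(|f|)$ by $\sum_i M_{\alpha,A_i^{-1}}f$, and then uses that $M_{\alpha,s}$ maps $L^s$ into $L^{\frac{ns}{n-\alpha s},\infty}$ (together with $M_\alpha\le M_{\alpha,s}$). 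You need this derivation, or an equivalent one, spelled out; as written, the crucial endpoint control of the $T_\alpha$ term is assumed rather than proved. Relatedly, in your localized splitting at the threshold comparable to $\lambda|Q|^{\alpha/n}$, the claim that $T_\alpha$ of the low part is pointwise small requires the $K\in S_\alpha$ computation with the correct power of $|Q|$; that bookkeeping is exactly what produces the $|Q|^{\alpha/n}$ factors in the displayed inequality and should be checked rather than asserted.
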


For the case $\alpha=0$, we have the following lemma:

\begin{lema}\label{lemasparse2}
 %For each $1\leq i \leq 2$,
Let $0<\alpha_1,\alpha_2<n$ such that $\alpha_1+\alpha_2=n$. Let $1<r_1,r_2\leq \infty$, $s\geq 1$ such that $\frac1{r_1}+\frac1{r_2}+\frac1{s}=1$. For each $1\leq i \leq 2$, let $A_i$ be invertible matrices satisfying hypothesis (H) and   $k_i\in S_{n-\alpha_i,r_i}\cap
H_{n-\alpha_i,r_i}$. Let  $T$ be strong type  $(p_0,p_0)$,  $1<p_0<\infty$. The following estimates hold:
\begin{enumerate}[(i)]
\item for a.e. $A_i^{-1}x\in Q_0^{i}$
$$|T(f\chi_{3(Q_0^{1}\cup Q_0^{2})})(x)|\leq \|T\|_{L^1 \rightarrow L^{1,\infty}}\sum_{i=1}^m |f(A_i^{-1}x)|+M_{T,Q_0^{1}\cup Q_0^{2}}f(x),$$
\item for all $x\in \R$
$$M_{T}(f)(x) \lesssim \sum_{i=1}^m \left[M_{s}f(A_i^{-1}x) + \|T\|_{L^1 \rightarrow L^{1,\infty}}Mf(A_i^{-1}x) \right]+ M_{\delta}(Tf)(x).$$
\end{enumerate}
Therefore
$$|\{x\in\R : M_{T}(f)(x)>\lambda\}|\leq c^s\int_{\R}\left( \frac{|f(x)|}{\lambda}\right)^sdx.$$
\end{lema}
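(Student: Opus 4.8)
The plan is to establish the two pointwise estimates (i) and (ii) by adapting Lerner's scheme for the grand maximal truncated operator to the rough product kernel $K(x,y)=k_1(x-A_1y)k_2(x-A_2y)$ and the matrices $A_1,A_2$, along the lines of \cite{IFRi18}, and then to deduce the distributional bound in the ``Therefore'' from (i)--(ii) together with the endpoint mapping properties of the auxiliary maximal operators and the weak $(1,1)$ bound of $T$. Throughout I would use the change of variables $z=A_i^{-1}\xi$ freely to pass between estimates written in the variable $\xi$ and in the variable $A_i^{-1}\xi$, at the cost of the harmless factor $|\det A_i|$.

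For part (i), fix a point $x$ for which, for $i=1,2$, the point $A_i^{-1}x$ lies in the interior of $Q_0^{i}$ and is a Lebesgue point of $f$, and $x$ is a point of approximate continuity of $T(f\chi_{3(Q_0^1\cup Q_0^2)})$; these properties hold for a.e. such $x$. For cubes $Q_i\subset Q_0^{i}$ with $Q_i\ni A_i^{-1}x$, split $f\chi_{3(Q_0^1\cup Q_0^2)}=f\chi_{3(Q_0^1\cup Q_0^2)\setminus 3(Q_1\cup Q_2)}+f\chi_{3(Q_1\cup Q_2)}$, and use approximate continuity to bound $|T(f\chi_{3(Q_0^1\cup Q_0^2)})(x)|$ by the value at a suitable nearby point $\xi$ ranging over a positive measure subset of a small cube around $x$. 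Evaluated at such $\xi$, the first summand contributes at most $M_{T,Q_0^1\cup Q_0^2}f(x)$ by the very definition of the local grand maximal operator. For the second summand one uses that $T$ is of weak type $(1,1)$: the set where $|T(f\chi_{3(Q_1\cup Q_2)})|>\lambda$ has measure $\lesssim \|T\|_{L^1\to L^{1,\infty}}\lambda^{-1}\sum_i |Q_i|\,\langle|f|\rangle_{3Q_i}$, so when $\lambda$ is a large multiple of $\|T\|_{L^1\to L^{1,\infty}}\sum_i\langle|f|\rangle_{3Q_i}$ this set is too thin to cover the intersection of the small neighbourhood of $x$ with the set where $T(f\chi_{3(Q_0^1\cup Q_0^2)})$ stays close to its value at $x$, hence an admissible $\xi$ avoiding it exists. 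Shrinking $Q_i$ to $\{A_i^{-1}x\}$ and using $\langle|f|\rangle_{3Q_i}\to|f(A_i^{-1}x)|$ gives (i).

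For part (ii), fix $x\in\R$ and cubes $Q_i\ni A_i^{-1}x$, and for $\xi\in Q_1\cup Q_2$ write $T(f\chi_{\R\setminus 3(Q_1\cup Q_2)})(\xi)=Tf(\xi)-T(f\chi_{3(Q_1\cup Q_2)})(\xi)$. The term $T(f\chi_{3(Q_1\cup Q_2)})(\xi)$ is handled exactly as in (i), and after taking the essential supremum over $\xi$ it yields $\|T\|_{L^1\to L^{1,\infty}}\sum_i Mf(A_i^{-1}x)$. For $Tf(\xi)$, compare it in a $\delta$-averaged sense over $Q_1\cup Q_2$ with $M_\delta(Tf)(x)$; the oscillation that is left is the integral of $|K(\xi,y)-K(\xi',y)|\,|f(y)|$ over $\R\setminus 3(Q_1\cup Q_2)$, which, after decoupling the two factors $k_i$ by H\"older with exponents $r_1,r_2,s$ (recall $\tfrac1{r_1}+\tfrac1{r_2}+\tfrac1s=1$) and applying the generalized fractional H\"ormander conditions $k_i\in H_{n-\alpha_i,r_i}$ to each factor, is bounded by $\lesssim \sum_i M_sf(A_i^{-1}x)$, as in the analogous computation of \cite{IFRi18}. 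Taking the supremum over $Q_1,Q_2$ gives (ii).

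Finally, combining (i) and (ii), $M_Tf$ is controlled pointwise by $\sum_i M_sf(A_i^{-1}\cdot)+\|T\|_{L^1\to L^{1,\infty}}\sum_i Mf(A_i^{-1}\cdot)+M_\delta(Tf)$. Since $M_s$ is of weak type $(s,s)$, $M$ is of weak type $(1,1)$ (and bounded on $L^s$ for $s>1$), and $M_\delta\circ T$ satisfies the required endpoint estimate by Kolmogorov's inequality together with the weak $(1,1)$ bound of $T$, the customary localization to the cube $Q$ and the change of variables give $|\{x\in\R:M_Tf(x)>\lambda\}|\le c^s\int_\R(|f(x)|/\lambda)^s\,dx$. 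I expect the main obstacle to be the kernel estimate inside (ii): controlling $|K(\xi,y)-K(\xi',y)|$ uniformly over the cube by the operators $M_sf(A_i^{-1}x)$ forces one to exploit the multiplicative structure of $K$, the separate fractional H\"ormander conditions on $k_1$ and $k_2$, and, through hypothesis (H), the invertibility of $A_1,A_2$ and of $A_1-A_2$, in order to keep the geometry of the change of variables under control; this is precisely what makes the several matrices, rough kernel setting harder than the classical one.
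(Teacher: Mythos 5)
Your overall scheme is the same as the paper's (a Lerner-type approximate-continuity argument for (i), a comparison-plus-$L^\delta$-averaging argument for (ii)), but two steps as written would fail. In (ii) you decompose $T(f\chi_{\R\setminus 3(Q_1\cup Q_2)})(\xi)=Tf(\xi)-T(f\chi_{3(Q_1\cup Q_2)})(\xi)$ and claim the local term, ``handled exactly as in (i)'', yields $\|T\|_{L^1\to L^{1,\infty}}\sum_i Mf(A_i^{-1}x)$ after taking the essential supremum over $\xi$. That cannot work: $M_T$ involves an essential supremum over \emph{all} $\xi\in Q_1\cup Q_2$, so you may not select a good point $\xi$ as in (i); the weak $(1,1)$ bound only controls the measure of the set where $T(f\chi_{3(Q_1\cup Q_2)})$ is large, and its essential supremum over $Q_1\cup Q_2$ is not controlled by $Mf(A_i^{-1}x)$ (it can blow up near the singularity). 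The paper avoids this by introducing an auxiliary variable $x'$, writing $|T(f\chi_{\R\setminus 3\bigcup_i Q_i})(\xi)|\le |T(f\chi_{\R\setminus 3\bigcup_i Q_i})(\xi)-T(f\chi_{\R\setminus 3\bigcup_i Q_i})(x')|+|Tf(x')|+\sum_i|T(f\chi_{3Q_i})(x')|$, bounding the difference term uniformly in $\xi,x'$ by $\sum_i M_sf(A_i^{-1}x)$ via the size/H\"ormander conditions, and then averaging in $x'$ in $L^\delta\bigl(Q,\tfrac{dx'}{|Q|}\bigr)$ over a cube $Q$ containing $\bigcup_i Q_i$ \emph{and} $x$: the middle term gives $M_\delta(Tf)(x)$ and the local terms give $\|T\|_{L^1\to L^{1,\infty}}Mf(A_i^{-1}x)$ by Kolmogorov's inequality. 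Your later sentence about the oscillation $|K(\xi,y)-K(\xi',y)|$ suggests you have this in mind, but the decomposition you actually wrote (both terms evaluated at the same $\xi$ over which you then take the essential supremum) does not give the stated bound.

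The second gap is the endpoint estimate for the term $M_\delta(Tf)$. ``Kolmogorov's inequality together with the weak $(1,1)$ bound of $T$'' only yields $|\{M_\delta(Tf)>\lambda\}|\lesssim \lambda^{-1}\|f\|_{L^1}$, whereas the lemma requires the weak $L^s$ bound $|\{M_\delta(Tf)>\lambda\}|\lesssim\int_{\R}(|f|/\lambda)^s$; neither inequality implies the other, and interpolating the weak $(1,1)$ bound with the strong $(p_0,p_0)$ hypothesis reaches $L^s$ only when $s\le p_0$, which is not assumed. The paper instead uses the $(p_0,p_0)$ hypothesis directly, $\|M_\delta(Tf)\|_{L^{p_0,\infty}}\le c\,\|Tf\|_{L^{p_0,\infty}}\le c\,\|T\|_{L^{p_0}\to L^{p_0,\infty}}\|f\|_{L^{p_0}}$, and then invokes Lemma 4.4 of \cite{IFRR17}, which transfers this bound, through the $S_{n-\alpha_i,r_i}\cap H_{n-\alpha_i,r_i}$ conditions on the kernel (in effect an $L^{s'}$-H\"ormander condition for $K$), to the weak $(s,s)$ endpoint. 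Some such Calder\'on--Zygmund-type transference (or an additional assumption relating $s$ and $p_0$) is needed to close your final step.
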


The following lemma is the so called $3^n$ dyadic lattices trick.
This result was es\-ta\-bli\-shed in \cite{LN18} and affirms:
\begin{lema}\label{lemaDiadic}\cite{LN18}
Given a dyadic family $\mathcal{D}$ there exist $3^n$ dyadic families $\mathcal{D}_j$ such that
$$\{3Q:Q\in \mathcal{D}\}=\bigcup_{j=1}^{3^n}\mathcal{D}_j,$$
and for every cube $Q\in \mathcal{D}$ we can find a cube $R_Q$ in each $\mathcal{D}_j$ such that $Q\subset R_Q$ and $3l_Q=l_{R_Q}$.
\end{lema}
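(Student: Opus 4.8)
The plan is to reduce the statement to the standard dyadic lattice on $\mathbb{R}$ and there build the families by an explicit three-colouring of the tripled intervals; the $n$-dimensional statement then follows by taking products, and a non-standard dyadic family is handled by running the colouring along its exhausting chain of top cubes. Concretely, I would first record the elementary geometric fact that $3Q\subset 3\widehat{Q}$ whenever $\widehat{Q}$ is the dyadic parent of $Q$ — coordinatewise this is just the remark that if a dyadic interval $\widehat{I}$ is bisected at its midpoint into $I_L\cup I_R$, then $3I_L,3I_R\subset 3\widehat{I}$. Since every cube of the standard lattice on $\mathbb{R}^n$ is a product $I_1\times\cdots\times I_n$ of equally long dyadic intervals and $3(I_1\times\cdots\times I_n)=3I_1\times\cdots\times 3I_n$, it suffices to treat $n=1$: from one-dimensional families $\mathcal{L}_1,\mathcal{L}_2,\mathcal{L}_3$ one sets $\mathcal{D}_{(j_1,\dots,j_n)}=\mathcal{L}_{j_1}\times\cdots\times\mathcal{L}_{j_n}$, obtaining $3^n$ families. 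For a genuinely non-standard $\mathcal{D}$ one uses that it is, generation by generation, an affine copy of the standard lattice built on an increasing exhausting chain $Q^0\subset Q^1\subset\cdots$ of top cubes (cf.\ \cite{LN18}), and runs the colouring below compatibly along this chain.

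For the one-dimensional construction, write $I_{k,m}=[2^km,2^k(m+1))$, so that $3I_{k,m}=[2^k(m-1),2^k(m+2))$, and colour this interval by $\chi(I_{k,m})\equiv m\pmod{3}$. Two facts drive the argument. (i) For each scale $2^k$ and each colour $c$ the intervals $\{3I_{k,m}:m\equiv c\pmod 3\}$ tile $\mathbb{R}$ by intervals of length $3\cdot 2^k$, since consecutive admissible values of $m$ differ by $3$ and give abutting intervals; thus all tripled intervals of a fixed scale are partitioned into three monochromatic tilings. (ii) A short residue computation shows that the colour-$c_1$ tiling at scale $2^{k+1}$ refines into the colour-$c_0$ tiling at scale $2^k$ precisely when $c_1\equiv 2c_0+2\pmod{3}$. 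The map $\phi(c)=2c+2\pmod{3}$ is an involution fixing $1$ and interchanging $0$ and $2$, so the only colour sequences $(c_k)_{k\in\mathbb{Z}}$ mutually compatible across all scales are the constant one $c_k\equiv 1$ and the two sequences equal to $0$ (resp.\ $2$) on even scales and to $2$ (resp.\ $0$) on odd scales. I would let $\mathcal{L}_1,\mathcal{L}_2,\mathcal{L}_3$ be the unions over $k$ of the monochromatic scale-$2^k$ tilings prescribed by these three sequences.

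The remaining verification is then routine: each $\mathcal{L}_j$ is a dyadic family because by (i) it partitions $\mathbb{R}$ at every scale and by (ii) its monochromatic tilings are nested from scale to scale, so every member splits into exactly two members one scale below, any two members lie in a common member, and every bounded set is eventually covered; we have $\mathcal{L}_1\cup\mathcal{L}_2\cup\mathcal{L}_3=\{3I:I\in\mathcal{D}\}$ with the union disjoint, because the three sequences use each colour on every scale exactly once; and for $Q=I_{k,m}$ and any $j$, the unique $m'\in\{m-1,m,m+1\}$ with $m'\equiv c_k^{(j)}\pmod{3}$ gives $R_Q=3I_{k,m'}\in\mathcal{L}_j$ with $Q\subset R_Q$ and $l_{R_Q}=3\cdot 2^k=3l_Q$. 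Taking products of the $\mathcal{L}_j$ transfers all three properties to $\mathbb{R}^n$, and propagating the colouring along $\{Q^N\}$ gives the general dyadic family.

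I expect the only real difficulty to be the bookkeeping: pinning down the mod-$3$ refinement relation $c_1\equiv 2c_0+2$ exactly (a slip here collapses the three lattices into one), and, in the non-standard case, checking that the colouring chosen on the subdivision of $Q^N$ propagates consistently to that of $Q^{N+1}$, so that $\mathcal{L}_1,\mathcal{L}_2,\mathcal{L}_3$ are genuine dyadic families and not merely finite fragments.
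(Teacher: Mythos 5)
First, note that the paper itself gives no proof of this lemma: it is quoted from \cite{LN18} (the ``Three Lattice Theorem''), so there is no in-paper argument to compare against. Your route --- colour the tripled intervals coordinatewise mod $3$, identify the colour sequences compatible across scales, and take products --- is essentially the shifted-grid argument underlying that theorem, and your one-dimensional bookkeeping for the \emph{standard} grid is correct: the refinement relation is $c_0\equiv 2c_1+2 \pmod 3$, which is equivalent to your $c_1\equiv 2c_0+2$ because the map $c\mapsto 2c+2$ is an involution, and it does force exactly the three sequences you list; the fact that for each $I_{k,m}$ and each family there is a unique $\mu\in\{m-1,m,m+1\}$ of the prescribed colour gives $R_Q$ with $Q\subset R_Q$ and $l_{R_Q}=3l_Q$ as required.

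There are, however, two genuine gaps. (a) The hypothesis is an \emph{arbitrary} dyadic family $\mathcal{D}$, and this is precisely the case you only gesture at. A general dyadic lattice in the sense of \cite{LN18} is not an affine copy of the standard lattice: the way the scale-$2^k$ grid sits inside the scale-$2^{k+1}$ grid may change from scale to scale, so your fixed involution $\phi(c)=2c+2$ must be replaced by scale-dependent affine bijections $c\mapsto 2c+b_k$ of $\mathbb{Z}/3\mathbb{Z}$, with $b_k$ determined by the offset of one generation inside the next. The construction does survive this --- since each transition map is a bijection, there are still exactly three compatible bi-infinite colour sequences per coordinate --- but that is exactly the step you leave unproved and yourself flag as the difficulty, so as written the proof covers only the standard grid. (b) Your ``routine verification'' asserts, for the families built over the standard grid, that any two members have a common member above them and that every bounded set is eventually covered. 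This is false for the constant-colour family $\mathcal{L}_1$: the point $0$ is an endpoint of one of its members at \emph{every} scale, so $[-3,0)$ and $[0,3)$ lie in no common member and the bounded set $[-1,1]$ is contained in no member at all. This matters because the paper explicitly requires (just before Theorem \ref{Sparse}) that every bounded set be contained in some cube of each $\mathcal{D}_j$, as do the lattices of \cite{LN18}. The correct remedy is tied to point (a): run the colouring on a genuine dyadic lattice $\mathcal{D}$ (which, unlike the standard grid, absorbs bounded sets); then each $\mathcal{D}_j$ inherits this property automatically, because the member of $\mathcal{D}_j$ at scale $k$ containing a point $x$ always contains the member of $\mathcal{D}$ at scale $k$ containing $x$. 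So the reduction to the standard grid both skips the case you actually need and destroys the absorption property; carrying out the colouring with the scale-dependent transition maps for a general lattice is what closes the gap.
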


\begin{proof}[Proof of Theorem \ref{Sparse}]
We follow the ideas as in \cite{AMPRR17,IFRR17,L16} for the
domination, and adapt these in to our operator.

We claim that for any $Q_0^1,\dots,Q_0^m$, there exist
$\frac1{2}$-sparse families $\mathcal{F}_i\subset \mathcal{D}(Q_0^i)$, $i=1,\dots,m$, such that
for a.e. $x\in \bigcup_{i=1}^m A_iQ_0^i$
\begin{equation}\label{claimSparse}
|T_{\alpha,m}(f\chi_{3\bigcup_{i=1}^m Q_0^i})(x)|\leq c \sum_{i=1}^m
\sum_{Q\in \mathcal{F}_i}
|3Q|^{\alpha/n}\|f\|_{\phi,3Q}\chi_Q(A_i^{-1}x).
\end{equation}

 Suppose that we have already proved \eqref{claimSparse}. Let $\D$ be a family of dyadic cubes such that there exists $Q_0\in \D$ and supp$f\subset Q_0$. Let us take cubes $Q_j$ such that supp$f\subset 3Q_j$. We start with the $Q_0$ and let us cover $3Q_0\setminus Q_0$ by $3^n-1$ congruent cubes. Each of them satisfies $Q_0\subset 3Q_j$. We do the same for $9Q_0\setminus 3Q_0$ and so on. The union of all of those cubes, including $Q_0$, will satisfy the desired properties.

We apply the claim to each cube $Q_j$, in the following way: let $Q_0^1=\dots=Q_0^m=Q_j$ then there exists a $\frac1{2}$-sparse family $\mathcal{F}_j\subset \mathcal{D}(Q_j)\subset \D$ such that for a.e. $x\in \bigcup_{i=1}^m A_iQ_0^i$,
$$ |T_{\alpha,m}(f\chi_{3 Q_j})(x)|\chi_{\bigcup_{i=1}^m A_iQ_j}(x)\leq
c \sum_{i=1}^m \sum_{Q\in \mathcal{F}_j} |3Q|^{\alpha/n}\|f\|_{\phi,3Q}\chi_Q(A_i^{-1}x).$$

Taking $\mathcal{F}= \bigcup \mathcal{F}_j$, this is an $\frac1{2}$-sparse family. Then, %since $A_1Q_j$ are disjoint cubes,
\begin{align*}
|T_{\alpha,m}(f)(x)|
%&=\sum_{Q_j\in \mathcal{F}}  |T_{\alpha,m}(f\chi_{3 Q_j})(x)|\chi_{A_1Q_j}(x)
%\leq \sum_{Q_j\in \mathcal{F}}  |T_{\alpha,m}(f\chi_{3 Q_j})(x)|\chi_{\bigcup_{i=1}^m A_iQ_j}(x)
 \leq c \sum_{i=1}^m \sum_{Q\in \mathcal{F}} |3Q|^{\alpha/n}\|f\|_{\phi,3Q}\chi_Q(A_i^{-1}x).
\end{align*}

If we take $R_Q\in \D_j$ such that ${3Q:Q \in \mathcal{F}}\subset \bigcup_{j=1}^{3^n}\D_j$ dyadic families, $|R_Q|\leq 3^n|3Q|$, this is posible by Lemma \ref{lemaDiadic}, let
$$\mathcal{S}_j=\{R_Q\in \D_j : Q \in \mathcal{F}\}.$$
Since $\mathcal{F}$ is a $\frac1{2}$-sparse family, then $\mathcal{S}_j$ is a $\frac1{2.9^n}$-sparse family. Then, we have that
\begin{align*}
|T_{\alpha,m}(f)(x)| \leq c \sum_{j=1}^{3^n}\sum_{i=1}^m \mathcal{A}_{\alpha, r,\mathcal{S}_j}f(A_i^{-1}x).
\end{align*}

Now to  prove \eqref{claimSparse} it is suffices to show the following recursive
estimate: for each $1\leq i \leq m$ there exists a countable family
$\{P_j^i\}_j$ of pairwise disjoint cubes in $\mathcal{D}(Q_0^i)$
such that $\sum_j P_j^i \leq \frac1{2}|Q_0^i|$ and

\begin{align*}\displaystyle
|T_{\alpha,m}(f\chi_{3\bigcup_{i=1}^m
Q_0^i})(x)|\chi_{\bigcup_{i=1}^m A_iQ_0^i}(x)&\leq c \sum_{i=1}^m
\sum_{Q\in \mathcal{F}_i}
|3Q|^{\alpha/n}\|f\|_{\phi,3Q}\chi_Q(A_i^{-1}x)
\\ & \qquad
 +|T_{\alpha,m}(f\chi_{3\bigcup_{i=1}^m P_j^i})(x)|\chi_{\bigcup_{i=1}^m A_iP_j^i}(x),
\end{align*}
for a.e. $x\in \bigcup_{i=1}^m A_iQ_0^i$. 
Iterating this estimates we obtain \eqref{claimSparse} with $\mathcal{F}_i=\{P_j^{i,k}\} $ where $\{P_j^{i,0}\}=\{Q_0^i\}$, $\{P_j^{i,0}\}=\{P_j^i\}$  and $\{P_j^{i,k}\}$ are the cubes obtained at the $k$.th stage of the iterative process. Each family $\mathcal{F}_i$ is a $\frac1{2}$-sparse. Indeed, for each $P_j^{i,k}$ it suffices to choose
$$E_{P_j^{i,k}}=P_j^{i,k}\setminus \bigcup_j P_j^{i,k+1}.$$

Observe that for any
family $\{P_j^i\}_j \subset \mathcal{D}(Q_0^i)$ of  pairwise
disjoint cubes, we have
\begin{align*}
&|T_{\alpha,m}(f\chi_{3\bigcup_{i=1}^m Q_0^i})(x)|\chi_{\bigcup_{i=1}^m A_iQ_0^i}(x)
\\&\leq  |T_{\alpha,m}(f\chi_{3\bigcup_{i=1}^m Q_0^i})(x)|\chi_{\bigcup_{i=1}^m A_i (Q_0^i \setminus \cup_j P_j^i)}(x) + \sum_j|T_{\alpha,m}(f\chi_{3\bigcup_{i=1}^m Q_0^i})(x)|\chi_{ \bigcup_{i=1}^mA_iP_j^i}(x)
\\&\leq |T_{\alpha,m}(f\chi_{3\bigcup_{i=1}^m Q_0^i})(x)|\chi_{\bigcup_{i=1}^m A_i (Q_0^i \setminus \cup_j P_j^i)}(x)
+ \sum_j|T_{\alpha,m}(f\chi_{\bigcup_{i=1}^m 3(Q_0^i\setminus P_j^i)})(x)|\chi_{ \bigcup_{i=1}^mA_iP_j^i}(x)
\\&\qquad+\sum_j|T_{\alpha,m}(f\chi_{3\bigcup_{i=1}^m P_j^i})(x)|\chi_{ \bigcup_{i=1}^mA_iP_j^i}(x),
\end{align*}
for almost every $x\in \mathbb{R}^{n}$. So it is suffices to show
that we can choose a countable family  $\{P_j^i\}_j$ of pairwise
disjoint cubes in $\mathcal{D}(Q_0^i)$ such that $\sum_j P_j^i \leq
\frac1{2}|Q_0^i|$ and for a.e. $x\in \bigcup_{i=1}^m A_iQ_0^i$,
\begin{align}\label{eq: AcotSparse}
|T_{\alpha,m}(f\chi_{3\bigcup_{i=1}^m Q_0^i})(x)|&\chi_{\bigcup_{i=1}^m A_i (Q_0^i \setminus \cup_j P_j^i)}(x)
+ \sum_j|T_{\alpha,m}(f\chi_{\bigcup_{i=1}^m 3(Q_0^i\setminus P_j^i)})(x)|\chi_{ \bigcup_{i=1}^mA_iP_j^i}(x)
\nonumber
\\&\leq c  \sum_{i=1}^m\sum_{Q\in \mathcal{F}_i} |3Q|^{\alpha/n}\|f\|_{\phi,3Q}\chi_Q(A_i^{-1}x).
\end{align}
To prove this we follow the ideas  in
\cite{AMPRR17,IFRR17,L16}, with $E=\bigcup_{i=1}^mE_{\alpha}^i$ defined by,
$$E_0^i=\{x\in Q_0^i : |f|>\gamma_n \|f\|_{s,3Q_0^i}\}\cup \{x \in Q_0^i: M_{T_{0,m},\cup_i Q_0^i}(f)>\gamma_n c \sum_{i=1}^m \|f\|_{s,3Q_0^i}\},$$
 if
$\alpha=0$ and,
$$E_{\alpha}^i= \{x \in Q_0^i: M_{T_{\alpha,m},\cup_i Q_0^i}(f)>\gamma_n c \sum_{i=1}^m|3Q_0^i|^{\alpha/n}\|f\|_{s,3Q_0^i}\},$$
if $\alpha>0$.
%{\red
%Then by Lemma \ref{lemaalpha} and \ref{lemasparse2}, we can choose $\gamma_n$ such that $|E|\leq \frac1{2^{n+2}}\sum_{i=1}^m|Q_0^i|$. Now, the proof is analogouos to the
%preceeding works. }

Now, we prove that there  exists $\gamma_n$ such that $\displaystyle |E^i_\alpha|\leq \frac1{2^{n+2}}\sum_{i=1}^m|Q_0^i|$.

If $\alpha=0$, using Lemma  \ref{lemasparse2}, we have that % that  $M_{T_{0,m}}$ is weak-type  $(s,1)$, then
\begin{align*}
|E_{0}^i|&\leq  \frac{\int_{Q_0^i}|f(x)|dx}{\gamma_n\|f\|_{s,3Q_0^i}} +c \int_{\cup_{i=1}^m 3Q_0^i} \left( \frac{|f(x)|}{ \gamma_n c \sum_{i=1}^m \|f\|_{s,3Q_0^i}}\right)^s dx
\\&\leq |3Q_0^i|\frac{\frac1{|3Q_0^i|}\int_{3Q_0^i}|f(x)|dx}{\gamma_n\|f\|_{s,3Q_0^i}} +c \sum_{i=1}^m\frac1{\gamma_n^s c^s}\int_{ 3Q_0^i} \left( \frac{|f(x)|}{ \sum_{i=1}^m \|f\|_{s,3Q_0^i}}\right)^s dx
\\&\leq \frac{|3Q_0^i|}{\gamma_n}+ c\sum_{i=1}^m \frac{|3Q_0^i|}{\gamma_n^s c^s}\frac1{|3Q_0^i|}\int_{ 3Q_0^i} \left( \frac{|f(x)|}{ \|f\|_{s,3Q_0^i}}\right)^s dx
\\&= \frac{|3Q_0^i|}{\gamma_n}+ c\sum_{i=1}^m \frac{|3Q_0^i|}{\gamma_n^s c^s}
\\&\leq \left( \frac1{\gamma_n} + \frac{c^{1-s}}{\gamma_n^s}\right)\sum_{i=1}^m|3Q_0^i|.
\end{align*}
Thus, we can choose  $\gamma_n$ such that
$$m\left( \frac1{\gamma_n} + \frac{c^{1-s}}{\gamma_n^s}\right)\leq \frac1{2^{n+2}}.$$

In te case of  $\alpha>0$, by  Lemma  \ref{lemaalpha}, we have %that  $M_{T_{\alpha,m}}$ is ewatk-type $(s,\frac{n}{n-\alpha s})$, then
\begin{align*}
|E_{\alpha}^i|^{\frac{n-\alpha s}{n}}&
\leq  c_1 \int_{3\cup Q_0^i} \left(\frac{|f(x)|}{\gamma_n c_2|3Q_0^i|^{\alpha/n}\|f\|_{s,3Q_0^i}}\right)^sdx
\\ & \leq C \sum_{i=1}^m\frac1{\gamma_n^s  |3Q_0^i|^{s\alpha/n}}\int_{ 3Q_0^i} \left( \frac{|f(x)|}{ \sum_{i=1}^m \|f\|_{s,3Q_0^i}}\right)^s dx
\\&\leq C \sum_{i=1}^m \frac{|3Q_0^i|}{\gamma_n^s |3Q_0^i|^{s\alpha/n}}\frac1{|3Q_0^i|}\int_{ 3Q_0^i} \left( \frac{|f(x)|}{ \|f\|_{s,3Q_0^i}}\right)^s dx
\\&= \frac{C}{\gamma_n^s} \sum_{i=1}^m |3Q_0^i|^{1-s\alpha/n}.
\end{align*}
Thus,
$$|E_{\alpha}^i| \leq \frac{C3 ^n}{\gamma_n^{\frac{sn}{n-\alpha s}}} \sum_{i=1}^m |Q_0^i|,$$
then  it is  enough to take  $\gamma_n$ such that
$$mC3 ^n\gamma_n^{-\frac{sn}{n-\alpha s}}\leq \frac1{2^{n+2}}.$$

Now we apply Calder\'on-Zygmund decomposition to the function $\chi_{E_{\alpha}^i}$ on  $Q_0^i$ at height  $\lambda =\frac1{2^{n+1}}$. We obtain pairwise disjoint  cubes $P_j^i\in \mathcal{D}(Q_0^i)$ such that
$$\chi_{E_{\alpha}^i}(x)\leq \frac1{2^{n+1}}\qquad a.e. \;x \not\in \cup P_j^i.$$
Also we have $|E_{\alpha}^i\setminus \cup_j P_j^i|=0$,
$$\sum_j |P_j^i|=\bigg| \bigcup_j P_j^i\bigg|\leq 2^{n+1}|E_{\alpha}^i| \leq 2^{n+1}|E|\leq \frac1{2}\sum_{i=1}^m|Q_0^i|$$
and
$$ \frac1{2^{n+1}} \leq \frac1{|P_j|} \int_{P_j} \chi_{E_{\alpha}^i}(x)dx =\frac{|P_j\cap E_{\alpha}^i|}{|P_j|} \leq \frac1{2}.$$
From the last estimate it follows that  $|P_j\cap (E_{\alpha}^i)^c|>0$. Indeed,
$$|P_j^i|=|P_j^i\cap (E_{\alpha}^i)|+|P_j^i\cap (E_{\alpha}^i)^c|\leq \frac1{2}|P_j^i|+|P_j^i\cap (E_{\alpha}^i)^c|,$$
Then  $\frac1{2}|P_j^i|<|P_j^i\cap (E_{\alpha}^i)^c|$.

For $i=1,\dots, m$, observe that since $P_j^i\cap (E_{\alpha}^i)^c \not= \emptyset$, $M_{T,\cup_ iQ_0^i}(f)(x)\leq \gamma_n c \sum_{i=1}^m \|f\|_{s,3Q_0^i}$ for some  $x\in  A_iP_j^i$ and then
$$\underset{\xi \in P_j^i}{\text{ess sup}} \bigg| T_{\alpha,m}(f\chi_{\cup_i 3(Q_0^i\setminus P_j^i)})(\xi) \bigg|\leq \gamma_n c \|f\|_{s,3Q_0^i}.$$
Thus,
$$\underset{\xi \in \cup_{i=1}^m P_j^i}{\text{ess sup}} \bigg| T_{\alpha,m}(f\chi_{\cup_i 3(Q_0^i\setminus P_j^i)})(\xi) \bigg|\leq \gamma_n c \sum_{i=1}^m \|f\|_{s,3Q_0^i},$$
which allows us to control the summation in \eqref{eq: AcotSparse}. 

On the other hand, if  $\alpha=0$, by  $(i)$ in Lemma \ref{lemasparse2} we know that a.e $x\in A_iQ_0^{i}$
$$|T_{0,m}(f\chi_{3(\cup_i Q_0^i)})(x)|\leq \|T_{0,m}\|_{L^1 \rightarrow L^{1,\infty}}\sum_{i=1}^m |f(A_i^{-1}x)|+M_{T_{0,m},\cup_iQ_0^i}f(x).$$
If $x\in Q_0^i\setminus \cup_j P_j^i$ then since $|E_{0}^i\setminus \cup_j P_j^i|=0$ we have that,  by the definition of $E^i_0$, 
$$|f(x)|\leq \gamma_n \|f\|_{s,3Q_0^i} \leq  \gamma_n \sum_{i=1}^m \|f\|_{s,3Q_0^i},$$
a.e. $x$ and also that  $M_{T_{0,m},\cup_{i=1}^m Q_0^i}f(x)\leq \gamma_n \sum_{i=1}^m \|f\|_{s,3Q_0^i}$, a.e. $x$. Consequently
$$ \bigg| T_{\alpha,m}(f\chi_{3\cup_iQ_0^i})(x) \bigg|\leq \gamma_n c\sum_{i=1}^m \|f\|_{s,3Q_0^i}.$$
Those estimates allows us to control the remaing terms in \eqref{eq: AcotSparse} for $\alpha=0$.

If  $\alpha>0$, by $(i)$  in Lemma  \ref{lemaalpha} we know that a.e.  $A_i^{-1}x\in Q_0^{i}$
$$|T_{\alpha,m}(f\chi_{3\cup_i Q_0^{i}})(x)|\leq M_{T_{\alpha,m},\cup_i Q_0^{i}}f(x),$$
then proceding as above, we prove \eqref{eq: AcotSparse} for $\alpha>0$.
%{\vb luego procedemos analogamente al caso anterior. }

\end{proof}

\subsection{Proof of previuos lemma}
In this subsection we prove the Lemmas \ref{lemaalpha} and
\ref{lemasparse2}.

\begin{proof}[Proof of Lemma \ref{lemaalpha}]
 ($i$): %was established in \cite{IFRV18}, so we only have to prove part
For $i=1,2$, let $\tilde{Q}^i$ be a cube centered at $A_i^{-1}x$
with lenght $t$ such that $\tilde{Q}^i \subset Q_0^i$.
\begin{align*}
|T_{\alpha}(f\chi_{3(Q_0^{1}\cup Q_0^{2})})(x)|\leq
|T_{\alpha}(f\chi_{3(\tilde{Q}^1\cup
\tilde{Q}^2)})(x)|+|T_{\alpha}(f\chi_{3(Q_0^{1}\cup Q_0^{2})
\setminus 3(\tilde{Q}^1\cup \tilde{Q}^2))})(x)|.
\end{align*}

Let $B^i$ be a ball with centred at $A_i^{-1}x$ with radius $R=\frac 32
\sqrt{n}t$, then $3\tilde{Q}^i \subset B^{i}$,
\begin{align*}
|T_{\alpha}&(f\chi_{3(\tilde{Q}^1\cup \tilde{Q}^2)})(x)|\leq
|T_{\alpha}(f\chi_{B^1\cup B^2})(x)| \leq
|T_{\alpha}(f\chi_{B^1})(x)|+|T_{\alpha}(f\chi_{B^2})(x)|.
%\leq \int_{B^1\cup B^2} |k_1(x-A_1z)||k_2(x-A_2z)||f(z)|dz
%\\&\leq \int_{B^1} |k_1(x-A_1z)||k_2(x-A_2z)||f(z)|dz +\int_{B^2} |k_1(x-A_1z)||k_2(x-A_2z)||f(z)|dz
\end{align*}
For $B^1$, we consider the sets $$X^1=B^1\cap\{z : |x-A_1z|\leq
|x-A_2z|\},$$ and $X^2=B^1\setminus X^1$.

For $X^1$, we decompose the set in the following  way
$$C^1_j=\{z : |x-A_1z|\sim 2^{-j}R\|A_1\|\},$$
where $\|A_1\|=\underset{x\not = 0}{\sup}
\frac{|A_1x|}{|x|}$. Observe that
$\displaystyle X^1\subset \bigcup_{j=1}^{\infty} C^1_j$. Let
$\tilde{B}_j=A_1^{-1}B(x,2^{-j}R\|A_1\|)$. Since $k_1 \in
S_{n-\alpha_1,r_1}$ and $k_2 \in S_{n-\alpha_2,r_2}$, we have
\begin{align*}
\int_{X^1}&|k_1(x-A_1z)||k_2(x-A_2z)||f(z)|dz
\\&\leq \sum_{j=1}^{\infty}\frac{|\tilde{B}_{j+1}|}{|\tilde{B}_{j+1}|} \int_{C^1_{j+1}}|k_1(x-A_1z)||k_2(x-A_2z)||f(z)|dz
\\&\leq \sum_{j=1}^{\infty} |\tilde{B}_{j+1}| \|k_1(x-A_1\cdot)\chi_{C^1_{j+1}}\|_{r_1,\tilde{B}_{j+1}}\|k_2(x-A_2\cdot)\chi_{C^1_{j+1}}\|_{r_2,\tilde{B}_{j+1}}\|f\|_{s,\tilde{B}_{j+1}}
\\&\leq M_{s}(f)(A_1^{-1}x) \sum_{j=1}^{\infty} (2^{-j}R\|A_1\|)^{n}(2^{-j}R\|A_1\|)^{\alpha-n}
=cM_{s}(f)(A_1^{-1}x) R^{\alpha}\sum_{j=1}^{\infty} 2^{-j\alpha}
\\&=cM_{s}(f)(A_1^{-1}x) R^{\alpha}.
\end{align*}

In an analogous way we obtain
$$\int_{X^2}|k_1(x-A_1z)||k_2(x-A_2z)||f(z)|dz \leq cM_{s}(f)(A_2^{-1}x) R^{\alpha}.$$

Hence, we have for all $R>0$,
$$|T_{\alpha}(f\chi_{3(\tilde{Q}^1\cup \tilde{Q}^2)})(x)|\leq c\left(M_{s}(f)(A_1^{-1}x) +M_{s}(f)(A_2^{-1}x)\right) R^{\alpha} + M_{T_{\alpha},Q_0^{1}\cup Q_0^{2}}f(x). $$

Taking $t\rightarrow 0$, we have $R\rightarrow 0$, then
$$|T_{\alpha}(f\chi_{3(\tilde{Q}^1\cup \tilde{Q}^2)})(x)|\leq M_{T_{\alpha},Q_0^{1}\cup Q_0^{2}}f(x).$$

($ii$): We are going to follow ideas in \cite{IFRR17} , \cite{IFRi18}
and \cite{IFRV18}. % We only prove the case $m=2$, the general case is analogous.
%Let $x,x',\xi in Q\subset \frac1{2} 3Q$. Then
Let $x\in \mathbb{R}^n$, for $i=1,2$ let $Q_i$ be a cube containing $A_i^{-1}x$. Let $B_x$ be a ball %with radius $R$ and center $x$
  such that $3(Q_1\cup Q_2)\subset B_x$.
For every $\xi \in Q_1\cup Q_2$, we have
\begin{align}\label{Tlejos}
|T_{\alpha}(f\chi_{\R\setminus 3(Q_1\cup Q_2)})(\xi)|&\leq
|T_{\alpha}(f\chi_{\R\setminus
B_x})(\xi)-T_\alpha(f\chi_{\R\setminus B_x})(x)| \nonumber
\\&\qquad+|T_{\alpha}(f\chi_{B_x \setminus 3(Q_1\cup Q_2)})(\xi)|
+ |T_{\alpha}(f\chi_{\R\setminus B_x})(x)| \nonumber
\\&\lesssim  |T_{\alpha}(f\chi_{\R\setminus B_x})(\xi)-T_{\alpha}(f\chi_{\R\setminus B_x})(x)| \nonumber
\\&\qquad+ |T_{\alpha}(f\chi_{B_x \setminus 3(Q_1\cup Q_2)})(\xi)|+
T_{\alpha}(|f|)(x).
\end{align}

Let $Z^1=B_x^c \cap \{z : |x-A_1z|\leq |x-A_2z|\}$ and $Z^2$
define in analogous way. Then,
\begin{align*}
|T_{\alpha}(f\chi_{\R\setminus
B_x})(\xi)&-T_\alpha(f\chi_{\R\setminus B_x})(x)|\leq
\int_{B_x^c}|K(\xi,y)-K(x,y)||f(y)|dy
\\&\leq \int_{Z^1}|K(\xi,y)-K(x,y)||f(y)|dy + \int_{Z^2}|K(\xi,y)-K(x,y)||f(y)|dy.
\end{align*}

 Let $R=\frac{l(Q)}{2\|A_1^{-1}\|}$ where
$\|A_1^{-1}\|=\underset{x\not =
	0}{\sup}\frac{|A_1^{-1}x|}{|x|}$. For $j\in \mathbb{N}$, let define 
$$D_j^1=\{z\in Z^1: |x-A_1z|\sim 2^{j+1}R\}.$$
Observe that $A_1^{-1}B(x,2R)\subset 3Q_1$,
  $$D_j^1\subset \{z:|x-A_1z|\sim 2^{j+1}R\}\subset A_1^{-1}B(x,2^{j+2}R)=:\tilde{B}_{1,j},$$
   and that  $Z^1\subset \bigcup_{j\in \mathbb{N}}D_j^1$.
%Since  $k_1\in H_{n-\alpha_1,\Phi_1}$ and $k_2\in S_{n-\alpha_2,\Phi_2}$,

If $K(x,y)=k_1(x-A_1y)k_2(x-A_2y)$, we have
\begin{align*}
|K(\xi,y)-K(x,y)|\leq &|k_1(\xi-A_1y)-k_1(x-A_1y)||k_2(\xi-A_2y)|
\\&+|k_1(x-A_1y)||k_2(\xi-A_2y)-k_2(x-A_2y)|.
\end{align*}
Then,
\begin{align*}
\int_{Z^1}&|k_1(\xi-A_1y)-k_1(x-A_1y)||k_2(\xi-A_2y)||f(y)|dy
\\& \leq \sum_{j=1}^{\infty}\frac{|\tilde{B}_{1,j}|}{|\tilde{B}_{1,j}|}\int_{\tilde{B}_{1,j}}\chi_{D_j^1}(y)|k_1(\xi-A_1y)-k_1(x-A_1y)||k_2(\xi-A_2y)||f(y)|dy
\\&\leq \sum_{j=1}^{\infty}|\tilde{B}_{1,j}| \|(k_1(\xi-A_1\cdot)-k_1(x-A_1\cdot))\chi_{D_j^1}\|_{r_1,\tilde{B}_{1,j}}\|k_2(\xi-A_2\cdot)\chi_{D_j^1}\|_{r_2,\tilde{B}_{1,j}}\|f\|_{s,\tilde{B}_{1,j}}
\\&\leq M_{\alpha,s}f(A_1^{-1}x)\sum_{j=1}^{\infty}|\tilde{B}_{1,j}|^{1-\alpha} \|(k_1(\xi-A_1\cdot)-k_1(x-A_1\cdot))\chi_{D_j^1}\|_{r_1,\tilde{B}_{1,j}}\|k_2(\xi-A_2\cdot)\chi_{D_j^1}\|_{r_2,\tilde{B}_{1,j}}.
\end{align*}
If $z\in D_j^1$ then $|x-A_2z|\geq |x-A_1z| \geq 2^{j+1}R$. So we
write $D_j^1=\bigcup_{k\geq j}(D_j^1)_{k,2}$ where
$$(D_j^1)_{k,2}=\{z\in D_j^1 : |x-A_2z|\sim 2^{k+1}R \}.$$

Since $k_2\in S_{n-\alpha_2,r_2}$, we have
$$\|k_2(\xi-A_2\cdot)\chi_{D_j^1}\|_{r_2,\tilde{B}_{1,j}}\lesssim (2^jR)^{-\alpha_2}. $$

Since  $k_1\in H_{n-\alpha_1,r_1}$ and $\alpha_1 + \alpha_2 = n-\alpha$,
\begin{align*}
\int_{Z^1}&|k_1(\xi-A_1y)-k_1(x-A_1y)||k_2(\xi-A_2y)||f(y)|dy
\\&\lesssim {M_{\alpha,s}}f(A_1^{-1}x)\sum_{j=1}^{\infty}|\tilde{B}_{1,j}|^{1-\alpha/n-\alpha_2/n} \|(k_1(\xi-A_1\cdot)-k_1(x-A_1\cdot))\chi_{D_j^1}\|_{r_1,\tilde{B}_{1,j}}
\\&\lesssim M_{\alpha,s}f(A_1^{-1}x).
\end{align*}
In an analogous way we have
$$\int_{Z^1}|k_1(x-A_1y)||k_2(\xi-A_2y)-k_2(x-A_2y)||f(y)|dy \lesssim M_{\alpha,s}f(A_1^{-1}x).$$
%{\red and the intregral in $Z^2$.}
Hence we obtain
\begin{equation}\label{TfueraBx}
|T_{\alpha}(f\chi_{\R\setminus
B_x})(\xi)-T_\alpha(f\chi_{\R\setminus B_x})(x)|\lesssim
M_{\alpha,s}f(A_1^{-1}x)+M_{\alpha,s}f(A_2^{-1}x).
\end{equation}

For the second term of \eqref{Tlejos}, we have
\begin{align*}
|T_{\alpha}(f\chi_{B_x \setminus 3(Q_1\cup Q_2)})(\xi)|&\leq
\int_{B_x \setminus 3(Q_1\cup
Q_2)}|k_1(\xi-A_1y)||k_2(\xi-A_2y)||f(y)|dy
\\&\leq \int_{Y^1} |k_1(\xi-A_1y)||k_2(\xi-A_2y)||f(y)|dy
\\ &\quad+ \int_{Y^2}|k_1(\xi-A_1y)||k_2(\xi-A_2y)||f(y)|dy,
\end{align*}
where
$$Y^1=\left(B_x \setminus 3(Q_1\cup Q_2)\right) \cap \{z : |x-A_1z|\leq |x-A_2z|\},$$
and $Y^2=\R \setminus Y^1$. Observe that for $i=1,2$,  $Y^i \subset
A_i^{-1}B(x,2^lR)\setminus  A_i^{-1}B(x,2R)$ for some $l\in
\mathbb{N}$ and let $B_{j}^i:=A_i^{-1}B(x,2^jR)$. Then, by H\"older
inequality we obtain
\begin{align*}
\int_{Y^1} &|k_1(\xi-A_1y)||k_2(\xi-A_2y)||f(y)|dy
\\&\leq \sum_{j=1}^{l-1} \frac{|B_{j+1}^1|}{|B_{j+1}^1|} \int_{B_{j+1}^1\setminus B_{j}^1} |k_1(\xi-A_1y)||k_2(\xi-A_2y)||f(y)|dy
\\&\leq \sum_{j=1}^{l-1} |B_{j+1}^1| \|k_1(\xi-A_1\cdot)\chi_{B_{j+1}^1\setminus B_{j}^1} \|_{r_1,B_{j+1}^1} \|k_2(\xi-A_2\cdot)\chi_{B_{j+1}^1\setminus B_{j}^1} \|_{r_2,B_{j+1}^1}\|f\|_{s,B_{j+1}^1}.
%\\&\leq M_{\alpha,\phi}f(A_1^{-1}x)\sum_{j=1}^{l-1} |B_{j+1}^1|^{1-\alpha/n} \|k_1(\xi-A_1\cdot)\chi_{B_{j+1}^1\setminus B_{j}^1} \|_{\Psi_1,B_{j+1}^1} \|k_2(\xi-A_2\cdot)\chi_{B_{j+1}^1\setminus B_{j}^1} \|_{\Psi_2,B_{j+1}^1}
\end{align*}
Since $k_2\in S_{n-\alpha_2,r_2}$, as above we have
$$\|k_2(\xi-A_2\cdot)\chi_{B_{j+1}^1\setminus B_{j}^1} \|_{r_2,B_{j+1}^1}\lesssim (2^{j+1}R)^{-\alpha_2}.$$

Then, since $k_1\in S_{n-\alpha_1,r_1}$ and  $\alpha_1 + \alpha_2
= n-\alpha$,  we get

\begin{align*}
\int_{Y^1} &|k_1(\xi-A_1y)||k_2(\xi-A_2y)||f(y)|dy
\\&\leq c M_{\alpha,s}f(A_1^{-1}x)\sum_{j=1}^{l-1} |B_{j+1}^1|^{1-\alpha/n-\alpha_2/n} \|k_1(\xi-A_1\cdot)\chi_{B_{j+1}^1\setminus B_{j}^1} \|_{r_1,B_{j+1}^1}
\\&\leq c M_{\alpha,s}f(A_1^{-1}x).
\end{align*}
In an analogous  way,
$$\int_{Y^2} |k_1(\xi-A_1y)||k_2(\xi-A_2y)||f(y)|dy\lesssim  M_{\alpha,s}f(A_2^{-1}x).$$
By \eqref{Tlejos}, \eqref{TfueraBx} and the last inequalities, we
obtain
\begin{align*}
|T_{\alpha}(f\chi_{\R\setminus 3(Q_1\cup Q_2)})(\xi)|& \lesssim
M_{\alpha,s}f(A_1^{-1}x) +
M_{\alpha,s}f(A_2^{-1}x)+T_{\alpha}(|f|)(x).
\end{align*}
%Since $M_{\alpha,s}$ is bounded from $L^s$ into $L^{\frac{ns}{n-\alpha s}}$ and the estimate of $T_{\alpha}(|f|)$ follows
%from the Coifman-Fefferman inequality in \cite{IFRi18} and the
%estimate of $M_{\alpha,s}$. Then we obtain the desired inequality.
By the Coifman-Fefferman inequality in \cite{IFRi18} for $T_{\alpha}(|f|)$ and using that $M_{\alpha,s}$ is bounded from $L^s$ in $L^{\frac{ns}{n-\alpha s},\infty}$  we obtain the desired inequality.
\end{proof}

\begin{proof}[Proof of Lemma \ref{lemasparse2}] We follows the ideas in \cite{L16}. We only give the changes in the proof.\\

$(i)$ For $i=1,\dots,m$, let $A_i^{-1}x\in \text{int}Q_0^i$ and
suppose that $A_i^{-1}x$ is a point of appro\-ximate continuity of
$T(f\chi_{3Q_0^i})$ (see \cite{EG15}). Then for every $\epsilon >0$, let $t>0$
$$E_t^i=\{y \in B(A_i^{-1}x,t) : |T(f\chi_{3Q_0^i})(y)-T(f\chi_{3Q_0^i})(x)|<\epsilon/m\}$$
Let $Q(A_i^{-1}x,t)$ the smallest cube centreded in $A_i^{-1}x$ containing $B(A_i^{-1}x,t)$. Take $t$ such that  $Q(A_i^{-1}x,t)\subset Q_0^i$.\\
Then for a.e. $y\in \bigcup_{i=1}^m E_t^i$ we have
$$|T(f\chi_{3(Q_0^{1}\cup Q_0^{2})})(x)|\leq \|T\|_{L^1 \rightarrow L^{1,\infty}}\sum_{i=1}^m |f(A_i^{-1}x)|+M_{T,Q_0^{1}\cup Q_0^{2}}f(x).$$\\

$(ii)$ Let $Q_1,\dots,Q_m$ cubes such that $A_i^{-1}x\in Q_i$ and
let $\xi \in \bigcup_{i=1}^m Q_i$. Then

\begin{align*}
|T(f\chi_{\mathbb{R}^n \setminus 3\bigcup_{i=1}^m Q_i})(\xi)| &\leq
|T(f\chi_{\mathbb{R}^n \setminus 3\bigcup_{i=1}^m Q_i})(\xi) -
T(f\chi_{\mathbb{R}^n \setminus 3\bigcup_{i=1}^m Q_i})(x')|
\\ &\qquad+ |Tf(x')| + |T(f\chi_{3\bigcup_{i=1}^m Q_i})(x')|
\\&\leq |T(f\chi_{\mathbb{R}^n \setminus 3\bigcup_{i=1}^m Q_i})(\xi) - T(f\chi_{\mathbb{R}^n \setminus 3\bigcup_{i=1}^m Q_i})(x')|
\\ &\qquad+ |Tf(x')| + \sum_{i=1}^m|T(f\chi_{3 Q_i})(x')|.
\end{align*}
As in the  fractional case, since $k_i\in
S_{n-\alpha_i,r_i}\cap H_{n-\alpha_i,r_i}$ for $i=1,\dots, m$,
we have
$$|T(f\chi_{\mathbb{R}^n \setminus 3\bigcup_{i=1}^m Q_i})(\xi) - T(f\chi_{\mathbb{R}^n \setminus 3\bigcup_{i=1}^m Q_i})(x')| \leq c \sum_{i=1}^{m} M_{s}f(A_i^{-1}x).$$
Now, let $Q$ be a cube such that $\bigcup_{i=1}^m Q_i\subset Q$ and
$x\in Q$. Taking average in $(L^{\delta}(Q),\frac{dx'}{|Q|})$
with $0<\delta <1$, we have
\begin{align*}
|T(f\chi_{\mathbb{R}^n \setminus 3\bigcup_{i=1}^m Q_i})(\xi)| &\leq
c \sum_{i=1}^{m} M_{s}f(A_i^{-1}x) + M_\delta(Tf)(x) +
\sum_{i=1}^m \left(\frac1{|Q|}\int_Q|T(f\chi_{3
Q_i})(x')|^{\delta}dx'\right)^{1/{\delta}}.
\end{align*} 
For the last term, by Kolmogorov's inequality we have
\begin{align*}
\left(\frac1{|Q|}\int_Q|T(f\chi_{3
Q_i})(x')|^{\delta}dx'\right)^{1/{\delta}} &\leq c \|T\|_{L^1
\rightarrow L^{1,\infty}} \frac1{|Q|}\int_Q|f\chi_{3 Q_i}(x')|dx'
\\&\leq c \|T\|_{L^1 \rightarrow L^{1,\infty}} \frac1{|Q_i|}\int_{3 Q_i}|f(x')|dx'
\\&\leq c \|T\|_{L^1 \rightarrow L^{1,\infty}} Mf(A_i^{-1}x).
%\\&\leq c \|T\|_{L^1 \rightarrow L^{1,\infty}} M_{\phi}f(A_i^{-1}x)
\end{align*}
Then, we obtain
$$M_{T}(f)(x)\lesssim \sum_{i=1}^m \left[M_{s}f(A_i^{-1}x) + \|T\|_{L^1 \rightarrow L^{1,\infty}}Mf(A_i^{-1}x) \right]+ M_{\delta}(Tf)(x).$$

Now, we prove the endpoint estimate. Observe that for $p=p_0$
\begin{align*}
\|M_{\delta}(Tf)\|_{L^{p,\infty}}&=\|M(|Tf|^{\delta})\|_{L^{p/\delta,\infty}}^{1/\delta}
\\&\leq c \||Tf|^{\delta}\|_{L^{p/\delta,\infty}}^{1/\delta} = c\|Tf\|_{L^{p,\infty}}
\\& \leq c\|T\|_{L^p\rightarrow L^{p,\infty}}\|f\|_{L^{p}}.
\end{align*}
Then, using Lemma 4.4 in \cite{IFRR17},
$$|\{x\in\R : M_{\delta}(Tf)(x)>\lambda\}|\lesssim \int_{\R}\left( c \frac{|f(x)|}{\lambda}\right)^s dx.$$
Since $M_{s}$ is bounded from $L^s$ in $L^{s,\infty}$, we obtain the desired inequality.
\end{proof}

\section{Apendix: Properties of $\mathcal{A}_{A, p,q}$}
In this section, we present some properties and remarks about this new class of weights.

\begin{prop}\label{PropwA}
Let $0\leq \alpha <n$, $1<p<\frac{n}{\alpha}$ and $\frac1{q}=\frac1{p}-\frac{\alpha}{n}$. If $w\in \mathcal{A}_{A,p,q}$, then $w(Ax)\leq [w]_{\mathcal{A}_{A,p,q}} w(x)$ a.e.$x\in
\mathbb{R}^{n}.$
\end{prop}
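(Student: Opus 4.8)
The plan is to read off the pointwise inequality from the averaged $\mathcal{A}_{A,p,q}$ condition by means of the Lebesgue differentiation theorem, exactly as one proves that a classical $\mathcal{A}_{p,q}$ weight is finite and positive a.e.

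First I would note that, $w$ being a weight, $0<w<\infty$ a.e., and that $w_A\in L^1_{\mathrm{loc}}(\mathbb{R}^n)$ because $x\mapsto Ax$ is a linear isomorphism; hence $\int_Q w_A^q$ and $\int_Q w^{-p'}$ are strictly positive for every cube $Q$. Since $w\in\mathcal{A}_{A,p,q}$ forces the product of the two averages $\left(|Q|^{-1}\int_Q w_A^q\right)^{1/q}\left(|Q|^{-1}\int_Q w^{-p'}\right)^{1/p'}$ to be bounded by $[w]_{\mathcal{A}_{A,p,q}}<\infty$, neither average can be $+\infty$; thus $w_A^q,w^{-p'}\in L^1_{\mathrm{loc}}(\mathbb{R}^n)$, and in particular $0<w(x),w(Ax)<\infty$ for a.e. $x$.

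Next, by the Lebesgue differentiation theorem almost every $x_0\in\mathbb{R}^n$ is at once a Lebesgue point of $w_A^q$ and of $w^{-p'}$ (the two exceptional sets are null, hence so is their union). Fixing such an $x_0$ and applying the $\mathcal{A}_{A,p,q}$ inequality to the cubes $Q$ centered at $x_0$ with $|Q|\to 0$,
$$\left(\frac1{|Q|}\int_Q w_A^q\right)^{1/q}\left(\frac1{|Q|}\int_Q w^{-p'}\right)^{1/p'}\le[w]_{\mathcal{A}_{A,p,q}},$$
I would let $|Q|\to 0$; the left-hand side converges to $\big(w_A^q(x_0)\big)^{1/q}\big(w^{-p'}(x_0)\big)^{1/p'}=w(Ax_0)\,w(x_0)^{-1}$, giving $w(Ax_0)\le[w]_{\mathcal{A}_{A,p,q}}\,w(x_0)$ for a.e. $x_0$. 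The case $p=1$ is handled in the same way, with $\|w^{-1}\|_{\infty,Q}$ in place of $\left(|Q|^{-1}\int_Q w^{-p'}\right)^{1/p'}$ and the bound $w(x)^{-1}\le\|w^{-1}\|_{\infty,Q}$, valid for a.e. $x\in Q$.

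I do not expect a genuine obstacle: this is a routine differentiation argument. The only points needing mild attention are the local integrability checks in the first step (so that the limits are honest finite numbers) and making sure the averages are taken over a family of cubes shrinking regularly to $x_0$, so that the Lebesgue point values are recovered — centered cubes suffice.
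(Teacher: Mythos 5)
Your argument is correct: the paper states this proposition without proof, and the standard route you take — local integrability of $w_A^q$ and $w^{-p'}$ from the $\mathcal{A}_{A,p,q}$ condition, then Lebesgue differentiation on shrinking cubes centered at a common Lebesgue point, so that the product of averages tends to $w(Ax_0)w(x_0)^{-1}\leq [w]_{\mathcal{A}_{A,p,q}}$ — is exactly the intended one. The only point you gloss over is the degenerate case of a weight vanishing on a set of positive measure (where $\int_Q w^{-p'}=\infty$), which is harmlessly excluded by the usual convention that weights are positive a.e.
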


%We define the weight class, $\mathcal{A}_{ p,q, A}$, this is an associated class of $\mathcal{A}_{ p,q, A}$.

%\begin{itemize}
%\item  The  weight $w$ is in the class $\mathcal{A}_{A, p,q}$, $1<p\leq \infty$, $1\leq q<\infty$ if
%\begin{equation}%\label{Apq}
%[w]_{\mathcal{A}_{A,p,q}}=\underset{Q}{\sup} \left( \frac{1}{\left\vert Q\right\vert }\int\limits_{Q}w^{q}(Ax)dx\right) ^{%
%\frac{1}{q}}\left( \frac{1}{\left\vert Q\right\vert }\int\limits_{Q}w^{-p^{%
%\prime }}(x)dx\right) ^{\frac{1}{p^{\prime }}}\leq \infty.
%\end{equation}

%\item

%The  weight $w$ is in the class $\mathcal{A}_{ p,q, A}$,   $1<p\leq \infty$, $1\leq q<\infty$ if
%\begin{equation}\label{pqA}
%[w]_{\mathcal{A}_{p,q, A}}=\underset{Q}{\sup}\left( \frac{1}{\left\vert Q\right\vert }\int\limits_{Q}w^{q}(x)dx\right) ^{%
%\frac{1}{q}}\left( \frac{1}{\left\vert Q\right\vert }\int\limits_{Q}w^{-p^{%
%\prime }}(Ax)dx\right) ^{\frac{1}{p^{\prime }}}\leq \infty.
%\end{equation}
%\end{itemize}

The class $\mathcal{A}_{A,p,q}$ satisfy some properties as the Muckenhoupt weights.

\begin{prop}
Let $w$ be a weight.
\begin{enumerate}[(i)]
\item If $p<q$
$$w\in \mathcal{A}_{A,p}\Rightarrow w\in \mathcal{A}_{A,q}$$
%\item $w\in \mathcal{A}_{p,A^{-1}}$ if and only if $w\in \mathcal{A}_{A,p}$
%\item
%$$w\in \mathcal{A}_{A,p}\Rightarrow w^{1-p'}\in \mathcal{A}_{A^{-1},p'}$$
\item If $w_0 \in \mathcal{A}_{A,1}$, $w_1 \in \mathcal{A}_{A^{-1},1}$ Then $w=w_0w_1^{1-p} \in\mathcal{A}_{A,p}$
\end{enumerate}
\end{prop}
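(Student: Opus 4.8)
The plan is to verify each of the two statements directly from the definition \eqref{Apq}, imitating the classical arguments for Muckenhoupt weights. For part (i), recall that by definition $w\in\mathcal{A}_{A,p}$ means $w\in\mathcal{A}_{A,p,p}$, i.e.
$$
[w]_{\mathcal{A}_{A,p}}=\sup_Q\left(\frac1{|Q|}\int_Q w_A^p\right)^{1/p}\left(\frac1{|Q|}\int_Q w^{-p'}\right)^{1/p'}<\infty,
$$
and similarly for $\mathcal{A}_{A,q}$ with $q$ in place of $p$. The idea is to fix a cube $Q$ and compare the two factors. For the first factor, since $p<q$, Jensen's (or Hölder's) inequality on the probability space $(Q,\frac{dx}{|Q|})$ gives $\left(\frac1{|Q|}\int_Q w_A^p\right)^{1/p}\le\left(\frac1{|Q|}\int_Q w_A^q\right)^{1/q}$, so the ``numerator'' factor only grows when passing from $p$ to $q$; this goes the wrong way. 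The correct route is the standard one: write $w^{-q'}$ against $w_A^q$ and use the $\mathcal{A}_{A,p}$ bound on small cubes together with a rescaling/duality observation. Concretely, one shows that $\mathcal{A}_{A,p}$ is equivalent to a ``dual'' statement about $w^{-1}$ lying in a class for the matrix $A^{-1}$ (this is precisely the content of part (ii) of the previous Proposition and of the symmetry in the definition), and then invokes the known nesting $\mathcal{A}_r\subset\mathcal{A}_s$ for $r<s$ applied to the honest Muckenhoupt weight obtained after the change of variables $x\mapsto Ax$, using \eqref{acotMalphaA}-type identities. I expect part (i) to reduce, after unwinding the $w_A$ notation, to the classical inclusion $A_p\subset A_q$ for $p<q$.

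For part (ii), this is the analogue of the classical ``factorization'' fact $w_0 w_1^{1-p}\in A_p$ when $w_0,w_1\in A_1$. First I would recall what the $\mathcal{A}_{A,1}$ condition says: $w\in\mathcal{A}_{A,1}$ means $\left(\frac1{|Q|}\int_Q w_A\right)\|w^{-1}\|_{\infty,Q}\le[w]_{\mathcal{A}_{A,1}}$, equivalently $\frac1{|Q|}\int_Q w_A\le[w]_{\mathcal{A}_{A,1}}\,\operatorname*{ess\,inf}_Q w$. Set $w=w_0 w_1^{1-p}$. Then $w_A=(w_0)_A (w_1)_A^{1-p}$, and $w^{-p'}=w_0^{-p'}w_1^{(p-1)p'}=w_0^{-p'}w_1^{p}$ since $(p-1)p'=p$. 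I would estimate the product of averages
$$
\left(\frac1{|Q|}\int_Q w_A^{\,p}\right)^{1/p}\left(\frac1{|Q|}\int_Q w^{-p'}\right)^{1/p'}
$$
by bounding $(w_1)_A^{(1-p)p}=(w_1)_A^{-p(p-1)}$ pointwise using $w_1\in\mathcal{A}_{A^{-1},1}$ — the key point being that $w_1\in\mathcal{A}_{A^{-1},1}$ is exactly the statement that controls $(w_1)_{A^{-1}}$ by $(w_1)_A$-averages, which after the shift is what is needed to pull $(w_1)_A^{-1}$ out of the first integral as a supremum. Pulling out $\sup_Q (w_1)_A^{-(p-1)}$ from the first factor and $\sup_Q w_1$ (from $w_1\in\mathcal{A}_{A^{-1},1}\Rightarrow w_1\in\mathcal{A}_{A^{-1}}$-type control) from the second, one is left with $\left(\frac1{|Q|}\int_Q (w_0)_A^{p}\right)^{1/p}\left(\frac1{|Q|}\int_Q w_0^{-p'}\right)^{1/p'}$ times these suprema, and Hölder plus the $\mathcal{A}_{A,1}$ bounds for $w_0$ and $w_1$ close the estimate with $[w]_{\mathcal{A}_{A,p}}\le[w_0]_{\mathcal{A}_{A,1}}[w_1]_{\mathcal{A}_{A^{-1},1}}^{p-1}$.

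The main obstacle, in both parts, is bookkeeping the matrix-twisted averages: the subscript $A$ on the $w^q$-average but not on the $w^{-p'}$-average breaks the naive symmetry, so one must be careful about when $(w_1)_A$ versus $(w_1)_{A^{-1}}$ appears, and this is precisely why part (ii) mixes the classes $\mathcal{A}_{A,1}$ and $\mathcal{A}_{A^{-1},1}$. I would handle this by first recording a clean lemma — that $w\in\mathcal{A}_{A,1}$ iff $w_A(Q)/|Q|\lesssim\operatorname*{ess\,inf}_Q w$ — and a companion change-of-variables identity $\int_Q g(Ax)\,dx=|\det A|^{-1}\int_{AQ}g$, after which both verifications become the classical computations transported through the substitution $x\mapsto Ax$.
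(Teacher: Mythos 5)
Your proposal runs into trouble mainly because of the normalization of the class $\mathcal{A}_{A,p}$. In the appendix this is the \emph{unpowered} condition
$[w]_{\mathcal{A}_{A,p}}=\sup_Q\bigl(\tfrac1{|Q|}\int_Q w_A\bigr)\bigl(\tfrac1{|Q|}\int_Q w^{1-p'}\bigr)^{p-1}$
(this is the normalization used throughout Section 5: the dual weight is $\sigma=w^{-p'/p}=w^{1-p'}$ in Proposition \ref{propAp}, and the characterization in terms of $\sup_f(\frac1{|Q|}\int_Q|f|)^p\bigl(\frac1{w_A(Q)}\int_Q|f|^pw\bigr)^{-1}$ only matches this version). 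With this reading, part (i) is a one-line Jensen argument: the first factor $\frac1{|Q|}\int_Q w_A$ does not depend on $p$ at all, and since $q'-1=\frac1{q-1}<\frac1{p-1}=p'-1$ one has $\bigl(\tfrac1{|Q|}\int_Q w^{-(q'-1)}\bigr)^{q-1}\le\bigl(\tfrac1{|Q|}\int_Q w^{-(p'-1)}\bigr)^{p-1}$ (and $\le\|w^{-1}\|_{\infty,Q}$ when $p=1$), so $[w]_{\mathcal{A}_{A,q}}\le[w]_{\mathcal{A}_{A,p}}$. Your powered reading ($w_A^p$ against $w^{-p'}$) creates the difficulty you noticed with the first factor, and the repair you propose—``unwind $w_A$ by the change of variables $x\mapsto Ax$ and quote $A_p\subset A_q$''—does not work: the substitution turns the cube into the parallelepiped in the twisted factor only, while the other factor remains an average over $Q$, so the condition is genuinely a two-weight condition for the pair $(w_A,w)$ on a common cube and is not a transported copy of classical $A_p$ (compare the remark after \eqref{PropdetA} that the constant need not be $\ge1$). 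The ``duality with a class for $A^{-1}$'' you invoke is neither proved nor true over cubes in general.

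For part (ii) your skeleton (the classical factorization computation) is the right one, but the step where the matrices actually matter is wrong as described. Pulling $\sup_Q w_1$ out of the second factor ``from $w_1\in\mathcal{A}_{A^{-1},1}$'' is not available: no $A_1$-type hypothesis controls a supremum of the weight, only averages by infima. The correct accounting, with the unpowered definition and $(1-p)(1-p')=1$ so that $w^{1-p'}=w_0^{1-p'}w_1$, is: bound pointwise on $Q$ the factors $(w_1)_A^{1-p}\le\bigl(\operatorname*{ess\,inf}_{AQ}w_1\bigr)^{1-p}$ and $w_0^{1-p'}\le\bigl(\operatorname*{ess\,inf}_{Q}w_0\bigr)^{1-p'}$, use $w_0\in\mathcal{A}_{A,1}$ to get $\frac1{|Q|}\int_Q (w_0)_A\le[w_0]_{\mathcal{A}_{A,1}}\operatorname*{ess\,inf}_Q w_0$, and you are left with having to prove $\frac1{|Q|}\int_Q w_1\lesssim\operatorname*{ess\,inf}_{AQ}w_1$. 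This is exactly where $w_1\in\mathcal{A}_{A^{-1},1}$ enters, via an auxiliary cube: choose a cube $R\supseteq AQ$ with $\ell(R)\le\sqrt{n}\,\|A\|\,\ell(Q)$, so that $A^{-1}R\supseteq Q$, and then
$\frac1{|Q|}\int_Q w_1\le\frac{|R|}{|\det A|\,|Q|}\,\frac1{|R|}\int_R (w_1)_{A^{-1}}\le C_{n,A}\,[w_1]_{\mathcal{A}_{A^{-1},1}}\operatorname*{ess\,inf}_{R}w_1\le C_{n,A}\,[w_1]_{\mathcal{A}_{A^{-1},1}}\operatorname*{ess\,inf}_{AQ}w_1$.
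This auxiliary-cube step is missing from your sketch, and it also shows that a geometric constant $C_{n,A}$ is unavoidable in general, so the clean bound $[w]_{\mathcal{A}_{A,p}}\le[w_0]_{\mathcal{A}_{A,1}}[w_1]_{\mathcal{A}_{A^{-1},1}}^{p-1}$ you announce should not be expected unless $A$ maps cubes to cubes.
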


\begin{prop} Let $A$ be an invertible matrix and $w\in \mathcal{A}_{A,p}$. Then
\begin{enumerate}[(i)]
%\item $[\delta^{\lambda}(w)]_{\mathcal{A}_{A,p}}=[w]_{\mathcal{A}_{A,p}}$ where $\delta^{\lambda}(x)=w(\lambda x_1,\dots,\lambda x_n)$.
%\item $[\tau^{z}(w)]_{\mathcal{A}_{A,p}}=[w]_{\mathcal{A}_{A,p}}$ where $\tau^z(w)(x)=w(x-z), \; z\in \mathbb{R}^n$.
%\item $[\lambda w]_{\mathcal{A}_{A,p}}=[w]_{\mathcal{A}_{A,p}}$  for all $\lambda >0$.
\item \begin{align}\label{PropdetA}
|det(A)|^{-1}\underset{Q}{\sup}\left(\frac{|AQ\cap Q|}{|Q|}\right)^{p}\leq [w]_{\mathcal{A}_{A,p}}.
\end{align}
\item
$$[w]_{\mathcal{A}_{A,p}}=\underset{Q}{\sup} \underset{\underset{|f|>0 \; a.e. \;in \;Q}{f\in L^p(Q,w)}}{\sup} \frac{\left(\frac1{|Q|}\int_Q|f|\right)^p}{\frac1{w_A(Q)}\int_Q|f|^pw}$$
\item The  ``A-doubling" property: For all $\lambda \geq1$ and all $Q$ we have
$$w_A(\lambda Q)\leq \lambda^{np}[w]_{\mathcal{A}_{A,p}}w(Q)$$
where $\lambda Q$ is the cube with same center as Q and size length $\lambda$ times the side length of $Q$.
\end{enumerate}
\end{prop}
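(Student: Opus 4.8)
Throughout write $\langle g\rangle_Q:=\frac1{|Q|}\int_Q g$ and $w_A(x):=w(Ax)$. Since $\mathcal A_{A,p}=\mathcal A_{A,p,p}$ and $p/p'=p-1$, the plan is to use that $w\in\mathcal A_{A,p}$ is equivalent to
\[
\langle w^p_A\rangle_Q\,\langle w^{-p'}\rangle_Q^{\,p-1}\le [w]^{\,p}_{\mathcal A_{A,p}}\qquad\text{for every cube }Q ,
\]
and to prove the three items in the form one obtains by keeping these $p$-th powers explicit (this is what the statement means in the appendix's convention, where the outer root and the distinction between $w$ and $w^p$ are suppressed). The only new ingredient beyond the classical Muckenhoupt computations is the change of variables $y=Ax$, which on a cube gives $\int_Q w^p_A=|\det A|^{-1}w^p(AQ)$ while leaving $\int_Q w^{-p'}$ untouched.

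For (ii) I would fix $Q$ and run the standard $L^{p'}$-duality argument. By H\"older's inequality with exponents $p,p'$,
\[
\frac1{|Q|}\int_Q|f|=\frac1{|Q|}\int_Q(|f|\,w)\,w^{-1}\le\frac1{|Q|}\Big(\int_Q|f|^pw^p\Big)^{1/p}\Big(\int_Q w^{-p'}\Big)^{1/p'},
\]
with equality when $|f|^pw^p\propto w^{-p'}$ on $Q$, i.e.\ for $|f|=c\,w^{1-p'}\chi_Q$, an admissible test function ($>0$ a.e.\ on $Q$ and in $L^p(Q,w^p)$ since $w$ is a weight, so $\int_Qw^{-p'}<\infty$). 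Hence $\sup_f\big(\tfrac1{|Q|}\int_Q|f|\big)^p\big(\int_Q|f|^pw^p\big)^{-1}=|Q|^{-p}\big(\int_Qw^{-p'}\big)^{p-1}$; multiplying through by $w^p_A(Q)$ turns the left side into the ratio in the statement and the right side into $\langle w^p_A\rangle_Q\langle w^{-p'}\rangle_Q^{p-1}$, and taking the supremum over $Q$ gives $[w]^p_{\mathcal A_{A,p}}$.

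For (i) I would fix $Q$ and set $E:=AQ\cap Q$ (the claim is trivial if $|E|=0$). H\"older on $E$ gives $|E|^p=\big(\int_E w\,w^{-1}\big)^p\le w^p(E)\big(w^{-p'}(E)\big)^{p-1}$. Since $\int_Q w^p_A=|\det A|^{-1}w^p(AQ)\ge|\det A|^{-1}w^p(E)$ and $\int_Q w^{-p'}\ge w^{-p'}(E)$, the $\mathcal A_{A,p}$ condition on $Q$ yields
\[
[w]^{p}_{\mathcal A_{A,p}}\ \ge\ \langle w^p_A\rangle_Q\langle w^{-p'}\rangle_Q^{p-1}\ \ge\ \frac{|\det A|^{-1}}{|Q|^{p}}\,w^p(E)\big(w^{-p'}(E)\big)^{p-1}\ \ge\ |\det A|^{-1}\Big(\frac{|AQ\cap Q|}{|Q|}\Big)^{p},
\]
and one takes the supremum over $Q$. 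For (iii) I would fix $\lambda\ge1$ and $Q$; the $\mathcal A_{A,p}$ condition on $\lambda Q$ gives $w^p_A(\lambda Q)=|\lambda Q|\,\langle w^p_A\rangle_{\lambda Q}\le[w]^p_{\mathcal A_{A,p}}|\lambda Q|^{p}\big(\int_{\lambda Q}w^{-p'}\big)^{1-p}$; since $Q\subset\lambda Q$ and $1-p<0$ one replaces $\int_{\lambda Q}w^{-p'}$ by the smaller $\int_Q w^{-p'}$, and with $|\lambda Q|^p=\lambda^{np}|Q|^p$ together with the H\"older bound $|Q|^p\big(\int_Q w^{-p'}\big)^{1-p}\le w^p(Q)$ from (i) one gets $w^p_A(\lambda Q)\le\lambda^{np}[w]^p_{\mathcal A_{A,p}}w^p(Q)$.

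I do not expect a genuine obstacle here, since the proposition is soft. The point requiring care is the bookkeeping around the change of variables: $w_A$ and $w$ are different functions, so each application of H\"older's inequality is to the pair $(w,w^{-1})$ on a subset of $Q$ (never to $(w_A,w^{-1})$), the determinant enters only through $\int_Q w^p_A=|\det A|^{-1}w^p(AQ)$, and one must keep track that $1-p<0$ reverses the monotonicity used in (iii).
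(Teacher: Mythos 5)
Your argument is correct, and it is the routine adaptation of the classical Muckenhoupt computations (H\"older applied to the pair $(w,w^{-1})$ on subsets of $Q$, the change of variables $\int_Q w^p_A=|\det A|^{-1}w^p(AQ)$, and the reversal coming from $1-p<0$ in the doubling estimate), which is exactly what this soft proposition calls for; your explicit translation of the appendix's convention (reading the statement for $v=w^p$ with constant $[w]^p_{\mathcal A_{A,p,p}}$) is handled consistently. Two small repairs. First, in (ii), within your convention the equality case $|f|^pw^p\propto w^{-p'}$ forces $|f|=c\,w^{-p'}\chi_Q$, not $c\,w^{1-p'}\chi_Q$; the latter is the extremizer for the denominator $\int_Q|f|^p w$ as literally written in the statement, i.e.\ it is $v^{1-p'}$ after the substitution $v=w^p$, so you have mixed the two conventions at this one point. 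With $|f|=w^{-p'}\chi_Q$ one checks $\int_Q|f|^pw^p=\int_Q w^{-p'}$ and the H\"older bound is attained, so the conclusion stands. Second, the finiteness of $\int_Q w^{-p'}$ (needed for the admissibility of the extremizer in (ii) and to avoid an indeterminate expression in (iii)) is not a consequence of $w$ merely being a weight; it follows from the hypothesis $w\in\mathcal A_{A,p}$ (on any cube where $w_A$ does not vanish a.e.), or can be circumvented by testing with truncations of the extremizer, so the parenthetical ``since $w$ is a weight'' should be amended.
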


\begin{remark}
Observe that \eqref{PropdetA} does not implies that the constant must be greater than $1$.%show that the weight constant can be less that $1$ for some $A$.
\end{remark}

\begin{prop}\label{propAp}
Let $1<p<\infty$, $w$ be a weight, $\sigma=w^{-p'/p}$ and $A, B$ be invertible matrices.
\begin{enumerate}[(i)]
%\item $w\in \mathcal{A}_{A,p}$ if and only if $\sigma \in  \mathcal{A}_{p',A}$ and $[w]_{\mathcal{A}_{A,p}}=[\sigma]_{\mathcal{A}_{p',A}}$
%\vspace{0.2pt}
%\item $w\in \mathcal{A}_{A,p}$ if and only if $w\in \mathcal{A}_{p,A^{-1}}$ and $[w]_{\mathcal{A}_{A,p}}\simeq [w]_{\mathcal{A}_{p,A^{-1}}}$
%\vspace{0.2pt}
%\item If  $w\in \mathcal{A}_{A,p}\cap \mathcal{A}_{p,A}$ then $w\in \mathcal{A}_{p}$ and $[w]_{\mathcal{A}_{p}}\leq [w]_{\mathcal{A}_{A,p}}[w]_{\mathcal{A}_{p,A}}$
\item If  $w\in \mathcal{A}_{A,p}\cap \mathcal{A}_{A^{-1},p}$ then $w\in \mathcal{A}_{p}$ and $[w]_{\mathcal{A}_{p}}\leq [w]_{\mathcal{A}_{A,p}}[w]_{\mathcal{A}_{A^{-1},p}}$.
\vspace{0.2pt}
\item If  $w\in \mathcal{A}_{A,p}\cap \mathcal{A}_{B,p}$ then $w\in \mathcal{A}_{AB,p}$ and $[w]_{\mathcal{A}_{AB,p}}\lesssim [w]_{\mathcal{A}_{A,p}}[w]_{\mathcal{A}_{B,p}}$.
\vspace{0.2pt}
\item Let $Q$ be a cube. If $w\in \mathcal{A}_{A,p}$ then $\frac{1}{|Q|}\int_Q \left(w_A w^{-1}\right)^{1/p}\leq [w]_{\mathcal{A}_{A,p}}^{1/p}$.
%{\red then not necessary  the constant $[w]_{\mathcal{A}_{A,p}}\geq 1$. Example???}
\end{enumerate}
\end{prop}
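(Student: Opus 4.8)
The plan is to derive all three items from the single pointwise comparison $w_A(x)\le [w]_{\mathcal{A}_{A,p}}\,w(x)$ for a.e.\ $x$, which is provided by Proposition~\ref{PropwA} (applicable here since $\alpha=0$ and $1<p<\infty$), together with nothing more than H\"older's and Jensen's inequalities; no sparse domination or maximal function input is needed. Throughout I write $w_A(x)=w(Ax)$ and use that an invertible linear change of variables preserves null sets, so pointwise a.e.\ inequalities are stable under precomposition with a matrix.

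\emph{Item (iii).} I would argue directly from the definition of $[w]_{\mathcal{A}_{A,p}}$. Using the elementary identity $(w_Aw^{-1})^{1/p}=w_A^{1/p}\,(w^{1-p'})^{1/p'}$ (valid because $-1/p=(1-p')/p'$), H\"older's inequality with exponents $p$ and $p'$ on a cube $Q$ gives
\[
\frac1{|Q|}\int_Q (w_Aw^{-1})^{1/p}\le\Big(\frac1{|Q|}\int_Q w_A\Big)^{1/p}\Big(\frac1{|Q|}\int_Q w^{1-p'}\Big)^{1/p'},
\]
and since $(p-1)/p=1/p'$ the right-hand side is exactly the $p$-th root of the $\mathcal{A}_{A,p}$ quotient attached to $Q$, hence at most $[w]_{\mathcal{A}_{A,p}}^{1/p}$. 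If the normalization of $[w]_{\mathcal{A}_{A,p}}$ in force carries the extra power $p$ inside the averages, one inserts one application of Jensen's inequality to pass from $\frac1{|Q|}\int_Q w_A$ to $\big(\frac1{|Q|}\int_Q w_A^{\,p}\big)^{1/p}$ and from $\frac1{|Q|}\int_Q w^{1-p'}$ to $\big(\frac1{|Q|}\int_Q w^{-p'}\big)^{1/p}$; the conclusion is unchanged.

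\emph{Items (ii) and (i).} Write $w_{AB}=w_A\circ B$. By Proposition~\ref{PropwA}, $w_A\le [w]_{\mathcal{A}_{A,p}}\,w$ a.e., so precomposing with $B$ gives $w_{AB}(x)=w_A(Bx)\le [w]_{\mathcal{A}_{A,p}}\,w(Bx)=[w]_{\mathcal{A}_{A,p}}\,w_B(x)$ a.e. Substituting this into the numerator average of the $\mathcal{A}_{AB,p}$ functional, each cube factor is controlled by $[w]_{\mathcal{A}_{A,p}}$ times the corresponding cube factor of the $\mathcal{A}_{B,p}$ functional, and taking the supremum over cubes yields $[w]_{\mathcal{A}_{AB,p}}\le [w]_{\mathcal{A}_{A,p}}[w]_{\mathcal{A}_{B,p}}$, which is slightly stronger than, and in particular implies, the stated $\lesssim$. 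Item (i) is the special case $B=A^{-1}$, where $\mathcal{A}_{AB,p}=\mathcal{A}_{I,p}=\mathcal{A}_p$; alternatively, apply Proposition~\ref{PropwA} to $A$ and to $A^{-1}$ to obtain $w_A\le [w]_{\mathcal{A}_{A,p}}w$ and, after the substitution $x\mapsto Ax$, $w\le [w]_{\mathcal{A}_{A^{-1},p}}w_A$ a.e.; inserting the latter into the definition of $[w]_{\mathcal{A}_p}$ converts the $w$-average into a $w_A$-average at the cost of a factor $[w]_{\mathcal{A}_{A^{-1},p}}$, leaving precisely $[w]_{\mathcal{A}_{A,p}}$.

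\emph{Main obstacle.} There is essentially none: the proposition is a formal consequence of the pointwise domination $w_A\lesssim w$ of Proposition~\ref{PropwA} together with H\"older and Jensen. The only points requiring care are keeping the exponent bookkeeping consistent with whichever normalization of $[w]_{\mathcal{A}_{A,p}}$ is adopted, and the routine observation that precomposition with an invertible matrix respects almost-everywhere inequalities; the genuine content lies entirely in Proposition~\ref{PropwA}.
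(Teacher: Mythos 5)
Your argument is correct: item (iii) is exactly H\"older's inequality with exponents $p,p'$ applied to $w_A^{1/p}w^{-1/p}$, and items (i)--(ii) follow from the pointwise bound $w_A\le[w]_{\mathcal{A}_{A,p}}\,w$ a.e.\ of Proposition~\ref{PropwA} (with $\alpha=0$), precomposed with the invertible matrix $B$ (which preserves null sets) and inserted into the defining supremum, with (i) the case $B=A^{-1}$. The paper states Proposition~\ref{propAp} in the appendix without proof, so there is nothing to compare against line by line, but this is the natural intended argument, and your handling of the two possible normalizations of $[w]_{\mathcal{A}_{A,p}}$ (Muckenhoupt-type versus the $\mathcal{A}_{A,p,p}$ form of \eqref{Apq}) covers the only real bookkeeping issue; in fact you obtain the sharper constants $[w]_{\mathcal{A}_p}\le[w]_{\mathcal{A}_{A,p}}[w]_{\mathcal{A}_{A^{-1},p}}$ and $[w]_{\mathcal{A}_{AB,p}}\le[w]_{\mathcal{A}_{A,p}}[w]_{\mathcal{A}_{B,p}}$.
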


The following results show in some cases a relation of this class with the Muckenhoupt class and the condition $w_A\lesssim w$, i.e. $w(Ax)\leq c w(x)$ a.e.$x\in \mathbb{R}^n$.

\begin{prop} Let $A$ be a invertible matrix and $w$ be a weight.
If $w\in A_p  {\text{ and }} w_A \lesssim  w$ then   $w\in \mathcal{A}_{A,p}$.
% Then
%\begin{enumerate}[(i)]
%\item If $w\in \mathcal{A}_{A,p}$ then  $w_A\lesssim w$
%\item If $w\in \mathcal{A}_{p,A}$ then   $w\lesssim w_A$
%\item If $w\in \mathcal{A}_{A,p}\cap \mathcal{A}_{A^{-1},p}$ then  $w\in A_p $
%\item If $w\in A_p  {\text{ and }} w_A \lesssim  w$ then   $w\in \mathcal{A}_{A,p}$
%\item If $w\in A_p  {\text{ and }} w \lesssim  w_A$ then  $w\in \mathcal{A}_{p,A}$
%\end{enumerate}
\end{prop}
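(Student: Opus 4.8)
The plan is to unwind the two definitions and to observe that the hypothesis $w_A\lesssim w$ lets one replace $w_A$ by $w$ inside the $\mathcal{A}_{A,p}$ functional at the cost of a single constant, after which the Muckenhoupt $A_p$ condition on $w$ finishes the argument. So first I would recall the expression
$$[w]_{\mathcal{A}_{A,p}}=\underset{Q}{\sup}\;\Big(\tfrac1{|Q|}\int_Q w_A\Big)\Big(\tfrac1{|Q|}\int_Q w^{-\frac1{p-1}}\Big)^{p-1},$$
in which the dual weight is $w^{-1/(p-1)}=w^{-p'/p}=\sigma$, exactly the same dual weight that appears in the Muckenhoupt functional $[w]_{A_p}$ of $w$ itself; this coincidence is the only algebraic point in the proof.

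Next, the hypothesis $w_A\lesssim w$ means there is a constant $c\geq 1$ with $w(Ax)\leq c\,w(x)$ for a.e. $x\in\R$. Integrating this pointwise inequality over an arbitrary cube $Q$ gives $\tfrac1{|Q|}\int_Q w_A\leq c\,\tfrac1{|Q|}\int_Q w$. Multiplying by $\big(\tfrac1{|Q|}\int_Q w^{-1/(p-1)}\big)^{p-1}$ and taking the supremum over all cubes $Q$ then yields
$$[w]_{\mathcal{A}_{A,p}}\leq c\,\underset{Q}{\sup}\;\Big(\tfrac1{|Q|}\int_Q w\Big)\Big(\tfrac1{|Q|}\int_Q w^{-\frac1{p-1}}\Big)^{p-1}=c\,[w]_{A_p}<\infty,$$
because $w\in A_p$. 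Hence $w\in\mathcal{A}_{A,p}$, with the quantitative bound $[w]_{\mathcal{A}_{A,p}}\leq c\,[w]_{A_p}$.

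There is essentially no obstacle here; the only thing worth flagging is that one should argue through the pointwise bound $w_A\leq c\,w$ rather than through the change of variables $y=Ax$. The latter would turn $\tfrac1{|Q|}\int_Q w_A$ into an average of $w$ over the parallelepiped $AQ$, which is not a cube, and would force one to cover $AQ$ by cubes and pay for the distortion of $A$; using $w_A\lesssim w$ directly sidesteps this. It is also worth remarking that both hypotheses are genuinely used: $w\in A_p$ alone controls only the $w$-average, not the $w_A$-average that enters $[w]_{\mathcal{A}_{A,p}}$.
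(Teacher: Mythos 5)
Your proof is correct and is essentially the argument the paper intends (the proposition is stated in the appendix with only this immediate verification behind it): the hypothesis $w_A\lesssim w$ gives $\tfrac1{|Q|}\int_Q w_A\leq c\,\tfrac1{|Q|}\int_Q w$ on every cube, and since the dual factor $\big(\tfrac1{|Q|}\int_Q w^{-1/(p-1)}\big)^{p-1}$ is the same in $[w]_{\mathcal{A}_{A,p}}$ and $[w]_{A_p}$, one gets $[w]_{\mathcal{A}_{A,p}}\leq c\,[w]_{A_p}$. Your remark that one should use the pointwise comparison rather than a change of variables onto the non-cubical set $AQ$ is also the right observation.
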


\begin{teo}
Let $w$ be a weight and $A$ be an invertible matrix, the weight
$$w\in \mathcal{A}_{A,p}\cap \mathcal{A}_{A^{-1},p} \qquad {\text{ if and only if }} \qquad w\in A_p  {\text{ and }} w_A \sim w.$$
\end{teo}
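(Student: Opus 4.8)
The plan is to deduce the equivalence directly from the three structural facts already established for the class $\mathcal{A}_{A,p}$: Proposition \ref{PropwA} (membership forces $w_A\lesssim w$), Proposition \ref{propAp}(i) (the intersection of the $A$- and $A^{-1}$-classes lands in $A_p$), and the Proposition immediately preceding the statement ($w\in A_p$ together with $w_A\lesssim w$ is enough for $w\in\mathcal{A}_{A,p}$). Beyond quoting these, the only manipulation needed is the change of variables $x\mapsto A^{\pm1}x$, which is harmless because $A$ is invertible and hence preserves Lebesgue-null sets.

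For the forward implication, suppose $w\in\mathcal{A}_{A,p}\cap\mathcal{A}_{A^{-1},p}$. Proposition \ref{propAp}(i) immediately gives $w\in A_p$. Applying Proposition \ref{PropwA} with $\alpha=0$ and $q=p$ to the matrix $A$ yields $w(Ax)\leq[w]_{\mathcal{A}_{A,p}}\,w(x)$ for a.e.\ $x$, that is, $w_A\lesssim w$. Applying the same proposition to $A^{-1}$ gives $w(A^{-1}x)\leq[w]_{\mathcal{A}_{A^{-1},p}}\,w(x)$ a.e.; replacing $x$ by $Ax$ we obtain $w(x)\lesssim w(Ax)$ a.e., i.e.\ $w\lesssim w_A$. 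Combining the two one-sided bounds gives $w_A\sim w$, so the right-hand side holds.

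For the converse, assume $w\in A_p$ and $w_A\sim w$. In particular $w_A\lesssim w$, so the Proposition immediately preceding the statement gives $w\in\mathcal{A}_{A,p}$. To get $w\in\mathcal{A}_{A^{-1},p}$, read the equivalence $w_A\sim w$ at the point $A^{-1}x$: this yields $w(A^{-1}x)\sim w(x)$ a.e., i.e.\ $w_{A^{-1}}\sim w$, hence $w_{A^{-1}}\lesssim w$. Invoking the same Proposition with $A$ replaced by $A^{-1}$ (and using $w\in A_p$) we obtain $w\in\mathcal{A}_{A^{-1},p}$. Therefore $w\in\mathcal{A}_{A,p}\cap\mathcal{A}_{A^{-1},p}$.

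There is no substantial obstacle: the content is entirely contained in the earlier propositions, and the proof amounts to bookkeeping — assembling the two one-sided inequalities $w_A\lesssim w$ and $w\lesssim w_A$ (equivalently $w_{A^{-1}}\lesssim w$) together with the $A_p$ condition. The only points deserving a line of care are the change of variables used to transfer an inequality for $A$ into one for $A^{-1}$ and back, and the observation that the preceding Proposition must be applied twice, once to $A$ and once to $A^{-1}$.
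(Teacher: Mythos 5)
Your proof is correct and follows exactly the route the paper intends: the theorem is placed right after the propositions you invoke (Proposition \ref{propAp}(i), Proposition \ref{PropwA} with $\alpha=0$, $q=p$, and the proposition that $w\in A_p$ together with $w_A\lesssim w$ implies $w\in\mathcal{A}_{A,p}$), and assembling them with the change of variables $x\mapsto A^{\pm 1}x$ is precisely the intended argument. No gaps; the only point worth a word is the harmless notational identification of $\mathcal{A}_{A,p}$ with $\mathcal{A}_{A,p,p}$ when quoting Proposition \ref{PropwA}, which the paper itself uses implicitly.
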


\begin{remark}
Observe that we have the following inclusions
$$A_p\cap \{w: w_A \lesssim w\}\subset \{w : {\text{\it testing condition for }} M_{A^{-1}} \} \subset \mathcal{A}_{A,p}.$$
\end{remark}

Now, we present examples of matrices such that $\mathcal{A}_{A,p}$ is a subclass of $\mathcal{A}_{p}$ the Mucken\-houpt class.

\begin{corol} Let $1<p<\infty$, $w$ be a weight and $A$ be an invertible matrix.
\begin{enumerate}[(i)]
\item If $A^{-1}=A$ and  $w\in \mathcal{A}_{A,p}$ then $w\in \mathcal{A}_{p}$ and $[w]_{\mathcal{A}_{p}}\lesssim [w]_{\mathcal{A}_{A,p}}^2$.
\item If $A^N=A$ for some $N \in \mathbb{N}$ and  $w\in \mathcal{A}_{A,p}$ then $w\in \mathcal{A}_{p}$ and $[w]_{\mathcal{A}_{p}}\lesssim [w]_{\mathcal{A}_{A,p}}^N$.
\end{enumerate}
\end{corol}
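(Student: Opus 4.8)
The final statement is a corollary about when $\mathcal{A}_{A,p}$ is a subclass of the Muckenhoupt class $\mathcal{A}_p$, under the hypotheses $A^{-1}=A$ (part (i)) or $A^N=A$ for some $N\in\mathbb{N}$ (part (ii)).

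\textbf{Plan for part (i).} The plan is to reduce directly to Proposition \ref{propAp}(i), which states that if $w\in \mathcal{A}_{A,p}\cap \mathcal{A}_{A^{-1},p}$ then $w\in \mathcal{A}_p$ with $[w]_{\mathcal{A}_p}\leq [w]_{\mathcal{A}_{A,p}}[w]_{\mathcal{A}_{A^{-1},p}}$. If $A^{-1}=A$, then $\mathcal{A}_{A^{-1},p}=\mathcal{A}_{A,p}$ trivially, since the defining condition \eqref{Apq} only depends on $w_A(x)=w(Ax)$ and here $w_{A^{-1}}=w_A$. Hence the intersection $\mathcal{A}_{A,p}\cap\mathcal{A}_{A^{-1},p}$ is just $\mathcal{A}_{A,p}$, and Proposition \ref{propAp}(i) gives $w\in\mathcal{A}_p$ with $[w]_{\mathcal{A}_p}\leq [w]_{\mathcal{A}_{A,p}}^2$. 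This is a one-line argument; there is essentially no obstacle.

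\textbf{Plan for part (ii).} Here I would iterate Proposition \ref{propAp}(ii), which says $w\in \mathcal{A}_{A,p}\cap\mathcal{A}_{B,p}$ implies $w\in\mathcal{A}_{AB,p}$ with $[w]_{\mathcal{A}_{AB,p}}\lesssim [w]_{\mathcal{A}_{A,p}}[w]_{\mathcal{A}_{B,p}}$. First note that $A^N=A$ forces $A$ to be invertible (given) and $A^{N-1}=I$, so the powers $A, A^2,\dots,A^{N-1}$ together with $A^{N-1}=I$ form a cyclic structure; in particular $A^{-1}=A^{N-2}$ is a positive power of $A$. Starting from $w\in\mathcal{A}_{A,p}$, apply Proposition \ref{propAp}(ii) with $B=A$ to get $w\in\mathcal{A}_{A^2,p}$ with constant controlled by $[w]_{\mathcal{A}_{A,p}}^2$; one subtlety is that the hypothesis requires $w\in\mathcal{A}_{A,p}\cap\mathcal{A}_{A^2,p}$, so I would induct carefully: show by induction on $k$ that $w\in\mathcal{A}_{A^j,p}$ for all $1\leq j\leq k$ with $[w]_{\mathcal{A}_{A^j,p}}\lesssim [w]_{\mathcal{A}_{A,p}}^j$. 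The inductive step uses part (ii) of Proposition \ref{propAp} with the matrices $A^{k-1}$ and $A$ (both already known to satisfy the condition by the inductive hypothesis) to obtain $w\in\mathcal{A}_{A^k,p}$. Continuing up to $j=N-1$ gives $w\in\mathcal{A}_{A^{N-1},p}=\mathcal{A}_{I,p}=\mathcal{A}_p$ with $[w]_{\mathcal{A}_p}\lesssim[w]_{\mathcal{A}_{A,p}}^{N-1}$, which is even slightly better than the claimed exponent $N$; in any case $[w]_{\mathcal{A}_p}\lesssim [w]_{\mathcal{A}_{A,p}}^N$.

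\textbf{Main obstacle.} The only real care needed is bookkeeping in the induction of part (ii): at each step the hypothesis of Proposition \ref{propAp}(ii) demands membership in $\mathcal{A}_{B,p}$ for \emph{both} matrices being multiplied, so one cannot simply chain $A\mapsto A^2\mapsto A^3$ without having previously established each intermediate class; the induction statement "$w\in\mathcal{A}_{A^j,p}$ for all $j\le k$" is precisely what makes this go through. I also need to confirm that $\mathcal{A}_{I,p}=\mathcal{A}_p$, which is immediate from the definition \eqref{Apq} since $w_I=w$. The implicit constants in $\lesssim$ only depend on dimension and $p$, accumulating a bounded power across the $N-2$ applications of Proposition \ref{propAp}(ii), so they cause no trouble.
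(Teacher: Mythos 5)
Your argument is correct and is essentially the intended one: the paper states this corollary without a written proof, and it follows exactly as you describe from Proposition \ref{propAp} — part (i) applied with $A^{-1}=A$ (so $\mathcal{A}_{A^{-1},p}=\mathcal{A}_{A,p}$), and part (ii) iterated inductively along the powers $A,A^2,\dots,A^{N-1}=I$, keeping track that each step needs membership for both factors. The only loose end is your closing ``in any case $[w]_{\mathcal{A}_p}\lesssim[w]_{\mathcal{A}_{A,p}}^{N}$'': since the paper explicitly remarks that $[w]_{\mathcal{A}_{A,p}}$ need not be $\geq 1$, passing from your exponent $N-1$ to the stated exponent $N$ requires a lower bound on the constant, which here does hold because $A^{N-1}=I$ forces $|\det A|=1$, so \eqref{PropdetA} gives $[w]_{\mathcal{A}_{A,p}}\geq c_{A,n,p}>0$ and hence $[w]_{\mathcal{A}_{A,p}}^{N-1}\lesssim_{A}[w]_{\mathcal{A}_{A,p}}^{N}$.
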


 An open question if there exists a matrix $A$ such that  $\mathcal{A}_{A,p}$ is greater than $A_p\cap \{w: w_A \lesssim w\}$.
 
 \

\subsection*{Acknowledgement} We want to thank to Dra. Marta Urciuolo, for her genero\-sity and knowledge given to the group of Analysis and Differential Equations of  the FaMAF, Universidad Nacional de Córdoba.  She worked for several years, using diffe\-rent approaches, with this kind of fractional operators. In this particular case, she was the first in start trying to characterize the classes of weights $\mathcal{A}_{A,p} $ or $\mathcal{A}_{A,p,q}$, ``Marta's  weights", as we like  to call them.   
	
\bibliographystyle{acm}
\bibliography{Biblio}

\end{document}